\documentclass{amsart}
\usepackage[utf8]{inputenc}
\usepackage{geometry}
\geometry{a4paper}
\usepackage{amsmath,amsthm,amsfonts,amssymb,mathtools,mathrsfs,bussproofs,stmaryrd}
\usepackage{biblatex}
\addbibresource{bibliography.bib}
\usepackage{hyperref,cleveref,caption}
\usepackage{multirow}
\usepackage{graphicx}
\usepackage[all]{xy}
\usepackage[toc,page]{appendix}

\newtheorem{Proposition}{Proposition}[section]

\newtheorem{lemma}[Proposition]{Lemma}

\newtheorem{fact}[Proposition]{Fact}
\newtheorem{notation}[Proposition]{Notation}
\newtheorem{theorem}[Proposition]{Theorem}
\newtheorem{corollary}[Proposition]{Corollary}
\theoremstyle{definition}

\newtheorem{definition}[Proposition]{Definition}

\newtheorem{remark}[Proposition]{Remark}

\newcommand{\Po}{\mathscr{P}}
\newcommand{\RO}{\mathsf{RO}}

\newcommand{\sset}{\subseteq}
\newcommand{\rset}{\supseteq}
\newcommand{\me}{\wedge}
\newcommand{\bigme}{\bigwedge}
\newcommand{\jo}{\vee}
\newcommand{\bigjo}{\bigvee}

\newcommand{\spa}{\mathcal}
\newcommand{\inv}{^{-1}}
\newcommand{\frec}{\mathfrak{F}}
\newcommand{\NSA}{\,^*\!\mathcal{A}}
\crefname{lemma}{Lemma}{Lemmas}
\crefname{theorem}{Theorem}{Theorems}
\crefname{definition}{Definition}{Definitions}
\crefname{proposition}{Proposition}{Propositions}
\crefname{notation}{Notation}{}
\crefname{corollary}{Corollary}{Corollaries}
\crefname{section}{Section}{Sections}
\crefname{appendix}{Appendix}{Appendices}

\usepackage{verbatim}
\usepackage{tikz}
\usetikzlibrary{positioning}
\usepackage{caption,subcaption}
\newcommand{\cat}{\mathbf}

\newcommand{\la}{\mathscr{L}}
\newcommand{\itp}{\mathscr{I}}
\newcommand{\st}{\,^\dagger\!}
\newcommand{\fr}{\Vdash}
\newcommand{\nfr}{\nVdash}
\newcommand{\fhyl}{^\dagger\!\mathcal{R}}
\newcommand{\fm}{\,^\dagger\!\mathcal{M}}
\newcommand{\RR}{\spa{R}}
\newcommand{\sq}{\}_{i \in \omega}}
\renewcommand{\phi}{\varphi}

\begin{document}

\title{A Semi-Constructive Approach to The Hyperreal Line}
\author{Guillaume Massas}
\date{January 25, 2022}

\maketitle

\begin{abstract}
    Using a recent alternative to Tarskian semantics for first-order logic, known as \textit{possibility semantics}, I introduce an alternative approach to nonstandard analysis that remains within the bounds of \textit{semi-constructive} mathematics, i.e., does not assume any fragment of the Axiom of Choice beyond the Axiom of Dependent Choices. I define the $F$\textit{-hyperreal line} $\fhyl$ as a possibility structure and show that it shares many fundamental properties of the classical hyperreal line, such as a Transfer Principle and a Saturation Principle. I discuss the technical advantages of $\fhyl$ over some other alternative approaches to nonstandard analysis and argue that it is well-suited to address some of the philosophical and methodological concerns that have been raised against the application of nonstandard methods to ordinary mathematics.
\end{abstract}

\textbf{Keywords:} Nonstandard analysis, possibility semantics, sheaf semantics, constructive mathematics.\\

\begin{ams}
00A30, 03E25, 03E40, 03H05
\end{ams}

\tableofcontents

\section{Introduction} \label{intro}

 Non-standard analysis is a branch of mathematical logic which focuses on the application of powerful metamathematical methods to ordinary mathematics, following the groundbreaking work of Abraham Robinson \cite{robinson1996non} in the 60s. Although it has now developed into a diverse field interested in more than one structure, it originated from the development of a system of hyperreal numbers which is powerful and suggestive enough to serve as an alternative foundation to classical analysis. The common way of introducing such numbers is via the hyperreal line $^*\RR$, constructed as some ultrapower of the standard real line modulo a non-principal ultrafilter on the set $\omega$ of the natural numbers \cite{keisler1994hyperreal}. The main advantage of this ultrapower construction is that it can easily be seen to satisfy two fundamental principles: the \textit{Transfer Principle}, which guarantees that the hyperreal line is \textit{standard enough} to have the same first-order theory as the real line, and the (Countable) \textit{Saturation Principle}, which allows one to derive the existence in $^*\RR$ of many \textit{non-standard objects} that can play a key role in simplifying many classical arguments in analysis.\\

The existence of non-principal ultrafilters on $\omega$, however, is a set-theoretic result that exceeds the resources of semi- or quasi-constructive mathematics as described in \cite{schechter1996handbook}, i.e., classical mathematics that can be carried out in $ZF$ with the addition of the Axiom of Dependent Choices ($DC$). As Schechter writes (\cite[Chap.~14]{schechter1996handbook}), $ZF+DC$ is a natural setting for analysis, as it neither assumes nor rejects some of the more paradoxical consequences of the Axiom of Choice in analysis, such as the existence of a well-ordering of the reals or of non-Lebesgue measurable sets. Free ultrafilters on infinite Boolean algebras exist in the presence of the Ultrafilter Lemma, a fragment of the Axiom of Choice which is known to be independent from $ZF+DC$ \cite{Feferman65}. As one can  straightforwardly define non-Lebesgue measurable sets from a non-principal ultrafilter on $\omega$, such objects go beyond the resources of semi-constructive mathematics, and they form what Schechter calls \textit{intangibles}, objects whose existence can be proved in classical mathematics even though they cannot themselves be \textit{explicitly} constructed.

On the other hand, in a strictly semi-constructive setting, preserving both the Transfer and Saturation Principles seems hopeless. A straightforward argument shows that the existence of an elementary embedding $e: \RR \to \mathcal{M}$ in a sufficiently rich language $\la$ implies the existence of a non-principal ultrafilter on $\omega$. Indeed, the Countable Saturation Principle implies the existence of at least one infinite hypernatural integer $N$, larger than any finite number $n$. If $\la$ contains a predicate symbol for every set of natural numbers, then one may consider the set $\mathcal{U} = \{A \sset \omega : \mathcal{M} \models A(N)\}$. By the Transfer Principle and the truth-conditions of Tarskian semantics, $\mathcal{U}$ is then easily seen to be a non-principal ultrafilter on $\omega$. We are therefore faced with the following trilemma: we must either give up the Transfer Principle, the Saturation Principle, or Tarskian semantics. It is this third option that I would like to pursue here, by appealing to an alternative semantics for classical logic known as possibility semantics. As we will see, possibility semantics allows for the construction of a first-order structure that shares many features with the classical hyperreal line, including versions of the Transfer and Saturation Principles, yet is entirely independent of the Prime Ideal Theorem. The key idea is to replace one single, static first-order model determined by an ultrafilter with a system of \textit{viewpoints}, partial approximations of such a model each determined by a filter. This idea itself is not new, although, as I argue below, it can be presented in a particularly powerful, simple and concrete way thanks to the machinery of possibility semantics. Not only does the resulting structure have some technical advantages over some other alternative approaches to nonstandard analysis, it also has some methodological and conceptual advantages over the classical hyperreal line obtained via ultrapowers. In particular, I argue that it is well-suited to address concerns that have been raised in the literature regarding the application of nonstandard methods to standard mathematics, including issues about the purity  (in the sense of \cite{arana2017alleged,arana2011purity}) of nonstandard methods and the canonicity of the hyperreal line.\\

The paper is organized as follows. In the first section, I recall the main features of possibility semantics for first-order logic, before introducing a new kind of construction, \textit{Fréchet powers}, which constitue an alternative to classical ultrapowers modulo a non-principal ultrafilter on $\omega$. This allows me to define the $F$-hyperreal line $\fhyl$ as a Fréchet power of the reals and to explore some of its mathematical properties. In particular, I show that versions of the Transfer and Saturation Principles of classical nonstandard analysis hold on $\fhyl$ and that the $F$-hyperreals share many features with the classical hyperreal line, including a natural characterization of continuous real-valued functions and a robust theory of internal sets. 

\cref{asym,dyn,gen} are then devoted to assessing the technical merits of $\fhyl$ over other alternatives to classical nonstandard analysis. As we will see, the central idea behind the $F$-hyperreal line, i.e., to avoid relying on a single non-principal ultrafilter by working with a system of filters instead, can be traced back to several distinct mathematical endeavors. I distinguish three kinds of alternative approaches to nonstandard analysis and argue that $\fhyl$ constitutes a natural meeting point for all three approaches. Indeed, it can be seen as a suitable framework to develop the natural and historically influential idea that the properties of infinite sequences ``in the limit'' are determined by the properties of all but finitely many of their values, thus continuing some historically-minded work by Laugwitz and offering a formal counterpart to a more recent proposal of Tao (\cref{asym}). On the other hand, the dynamic aspect of possibility semantics, and its connections with sheaf semantics and topos theory, also allows one to think of $\fhyl$ as a ``varying'' reduced power of the reals (\cref{dyn}). Finally, $\fhyl$ can be analysed as a Boolean-valued model of analysis in which viewpoints are partial approximations of classical ultrapowers, which then arise as generic constructions in the precise sense of forcing (\cref{gen}).

Finally, in \cref{obj}, I turn to some philosophical objections to the application of nonstandard methods in ordinary mathematics and argue that the $F$-hyperreal line offers a novel way to tackle the problems raised by these objections. In particular, the fact that $\fhyl$ can be constructed in a semi-constructive setting means that it is better suited to address purity concerns regarding the use of nonstandard methods in analysis, and the use of a system of filters in lieu of a single ultrafilter turns it into a canonical structure.

\section{Constructing the F-Hyperreal Line}\label{PossPow}

In this section, I introduce the F-hyperreal line as a concrete possibility structure. I start by recalling some basic features of possibility semantics for first-order logic, before introducing \textit{Fr\'echet powers} as a semi-constructive analogue of ultrapowers, and defining the $F$-hyperreal line $\fhyl$ as a Fréchet power of the real line. 

\subsection{Possibility Semantics}\label{poss}

Possibility semantics originated as an alternative semantics for classical propositional and modal logic \cite{Hol16,Humberstone}. From a mathematical perspective, it shares a lot with the technique of forcing in set theory, especially the correspondence between forcing extensions and Boolean-valued models \cite{bell1977boolean}, and with a specific kind of sheaf semantics in topos theory \cite{maclane2012sheaves}. These connections will be discussed in more detail in \cref{dyn,gen} below. From a philosophical perspective, possibility semantics can be seen as a generalization of Kripke semantics for propositional modal logic. In Kripke semantics, models are collections of possible worlds, which can be identified with maximal sets of consistent formulas. By contrast, the points in a possibility structure are ``partial'' worlds, i.e., they correspond to consistent sets of formulas that may contain neither a formula nor its negation. Points in a possibility structure are naturally ordered by how informative they are, where a point is more informative than another if it satisfies more formulas. In that respect, possibility semantics can be seen as a variation on Kripke semantics for propositional intuitionistic logic. The crucial difference however lies in the semantic clauses for disjunction: in possibility semantics, a disjunction may be satisfied at a point $p$ without any of the disjuncts being satisfied at $p$. This is what allows the law of excluded middle to always be true at a partial world, even when that world is not maximally determined.

Because the points in a possibility structure are partial possibilities, instead of maximal possible worlds, the construction of possibility structures is typically more constructive than the construction of standard, possible worlds models. From the mathematical side, this allows for a choice-free representation theorem for Boolean algebras, which is also lifted to a full categorical duality in \cite{hol19}. This is in stark contrast with the standard Stone duality, in which the Axiom of Choice, and more precisely the Ultrafilter Lemma (any filter on a Boolean algebra extends to an ultrafilter) plays a crucial role. In possibility semantics, the role usually played by maximal, non-constructive objects (possible worlds, ultrafilters), is taken up by partial, constructive objects (partial worlds, filters). As we will see below, it is precisely this feature that will allow for a construction of the hyperreals that bypasses the usual reliance on a non-principal ultrafilter on $\omega$.

Possibility semantics has also been developed for first-order languages, although the field is currently less advanced. The seminal work is a manuscript by van Benthem \cite{vanBenthem81}, and a detailed presentation can be found in \cite{holposs}. A very attractive feature of first-order possibility semantics is the existence of a fully constructive completeness theorem for first-order logic, while the usual completeness theorem for first-order logic with respect to Tarskian semantics is known to require the Axiom of Choice (see \cite{bell}, p.~140). In what follows, I define first-order possibility structures in a slightly different fashion than in \cite{holposs}, by introducing a slight alteration to the way Holliday defines the interpretation of functions symbols. As Holliday remarks in \cite[fn.~20]{holposs}, this modification is however of no real significance for the constructive completeness theorem obtained in \cite[Theorem 4.3.8]{holposs}.

\begin{definition}[Possibility structure] \label{def1}
Let $\la$ be a first-order language. A possibility structure is a tuple $(\mathfrak{P}, D, \itp)$ such that:
\begin{itemize}
    \item $\mathfrak{P} = (P, \leq)$ is a partially ordered set (\textit{poset} for short) of \textit{viewpoints};
    \item $D$ is a set of \textit{guises};
    \item $\itp$ is a function mapping any $p \in P$ and any $n$-ary relation symbol $R$ (including the equality symbol $=$) in $\la$ to a subset of $D^n$, and any $n$-ary function symbol $f$ in $\la$ to a function from $D^n$ into $D$, so that for any $p, q \in P$, any $n$-ary relation $R$ in $\la$, any $n$-ary function symbol $f$ and any $n$-tuple $\overline{a}$, the following conditions hold:
    \begin{description}
        \item[Persistence] If $\overline{a} \in \itp(p,R)$ and $q\leq p$, then $\overline{a} \in \itp(q,R)$;
        \item[Refinability] If $\overline{a} \notin \itp(p,R)$, then there is $q \leq p$ such that for all $r \leq q$, $\overline{a} \notin (r,R)$;
        \item[Equality-as-equivalence] $\itp(p,=)$ is an equivalence relation on $D\times D$;
        \item[Equality-as-congruence] if $\overline{a} = (a_1,...,a_n)$ and $\overline{b} = (b_1,...,b_n)$ is an $n$-tuple such that $(a_i,b_i) \in \itp(p,=)$ for all $i \leq n$, then $(\itp(f)(\overline{a}),\itp(f)(\overline{b})) \in \itp(p,=)$, and $\overline{a} \in \itp(p,R)$ iff $\overline{b} \in \itp(p,R)$.
    \end{description}
\end{itemize}
\end{definition}

Intuitively, any point in a possibility structure provides us with a ``partial viewpoint'' on how the model actually looks. In particular, the ``guises'' in the domain $D$ are not objects themselves, but mere distinct ways of presenting objects. Two different guises may actually correspond to one and the same object from one viewpoint, hence the need for the relation symbol for equality to be interpreted as an equivalence relation, rather than as strict equality. As we move from less informative viewpoints to more informative ones, more guises are identified with one another, and more information is gained regarding the relations that hold between the objects that the guises designate. The persistence condition encapsulates the idea that the partial order on the poset does indeed capture the increase of information between viewpoints: no information is lost when we move from a viewpoint to a stronger viewpoint.\footnote{For a reader unfamiliar with forcing, it might seem counterintuitive that a ``smaller'' viewpoint is also a ``stronger'' one. But if one take a viewpoint $p$ to stand in for ``all the ways compatible with $p$ in which the model could actually be'' and the relation $\leq$ to indicate containment, then it is straightforward to see that $p\leq q$ precisely when ``all the ways compatible with $p$ in which the model could actually be'' are also "ways compatible with $q$ in which the model could actually be'', thus meaning that $p$ imposes stronger conditions than $q$ on what the model could actually be. It is worth mentioning that nothing really hinges upon this choice of defining ``less than'' as ``stronger than'', rather than ``weaker than'', although, as it will become more apparent later on, doing so underscores the tight connection between possibility semantics, forcing, and sheaf semantics.} The refinability condition, by contrast, ensures that our information states, while partial, are as informative as they could be. More precisely, it is the contrapositive of the following ``sure thing'' principle: if no further refinement of our current viewpoint could make sure that the tuple $\overline{a}$ does not stand in relation $R$, then we might as well conclude already that the tuple $\overline{a}$ does stand in relation $R$. This refinability condition is what sets apart possibility semantics from Kripke semantics for first-order intuitionistic logic. It also appears in the inductive definition of satisfaction of a formula, to which I now turn.

\begin{definition}[Forcing relation]\label{semdef}
Let $\mathfrak{M} = (\mathfrak{P},D,\itp)$ be a $\la$-possibility structure for a first-order language $\la$. The forcing relation $\fr$ is inductively defined for any $p \in P$, any $\la$-formula $\phi(\overline{x})$ with $n$ free variables, and any $n$-tuple $\overline{a}$ of elements of $D$ as follows:
\begin{itemize}
    \item If $\phi:=R(t_1(\overline{x_1}),...,t_n(\overline{x_j}))$, where $t_1,...,t_n$ are $\la$-terms of arity $n_1,...,n_j$ summing up to $n$ and $R$ is a $j$-ary relation symbol, then $$p \fr R(t_1(\overline{a_1}),...,t(\overline{a_j})) \text{ iff } (\itp(t_1)(\overline{a_1}),...,\itp(t_n)(\overline{a_n})) \in \itp(p,R),$$ where $\overline{a} = \overline{a_1}...\overline{a_j}$, $\overline{a_i}$ is an $n_i$-ary tuple for any $i \leq j$, and the interpretation of an $\la$-term is inductively defined from the interpretation of function symbols as usual;
    \item If $\phi := \neg \psi$, then $p \fr \phi$ iff for all $q \leq p$, $q \nfr \psi$;
    \item If $\phi := \psi \me \chi$, then $p \fr \phi$ iff $p \fr \psi$ and $ p \fr \chi$;
    \item If $\phi := \psi \jo \chi$, then $p \fr \phi$ iff for all $q \leq p$ there is $r \leq q$ such that $r \fr \psi$ or $r \fr \chi$;
    \item If $\phi := \psi \to \chi$, then $p \fr \phi$ iff for all $q \leq p$, $q \fr \psi$ implies $q \fr \chi$;
    \item If $\phi := \forall x \psi$, then $p \fr \phi$ iff $p \fr \psi(a)$ for any $a \in D$;
    \item If $\phi := \exists x \psi$, then $p \fr \phi$ iff for all $q\leq p$ there is $r \leq q$ such that $r \fr \phi(a)$ for some $a \in D$.
\end{itemize}

\noindent Given a $\la$-formula $\phi$ and an $n$-tuple $\overline{a}$, $\phi(\overline{a})$ is \textit{valid} in $\mathfrak{M}$ (denoted $\mathfrak{M} \models \phi(\overline{a})$) if $p \fr \phi(\overline{a})$ for all $p \in P$.

\end{definition}

The forcing clauses introduced above are of course reminiscent of both Kripke semantics and the forcing relation in set theory. The refinability condition appears in the clauses for disjunctions and existentials, which can be straightforwardly derived from the clauses for negations, conjunctions and universals and De Morgan's laws. From an algebraic perspective, they also ensure that every $\la$-sentence is given a value in a Boolean algebra of subsets of the poset $P$. More precisely, we may introduce the following notation:

\begin{notation}\label{not1} Given an $\la$-formula $\phi(\overline{x})$ with $n$ free variables and a $n$-tuple $\overline{a}$, let $\llbracket \phi(\overline{a})\rrbracket = \{p \in P: p \fr \phi(\overline{a})\}$. 
\end{notation} 

Then a simple induction on the complexity of formulas shows that $\llbracket \phi(\overline{a})\rrbracket$ is always a \textit{regular-open} subset of $P$, i.e., that for any $p \in P$:
\begin{align*}
    p \in \llbracket\phi(\overline{a})\rrbracket \Leftrightarrow \forall q \leq p \, \exists r \leq q: r \in \llbracket \phi(\overline{a}) \rrbracket.
\end{align*}
Since the regular open sets of any poset always form a Boolean algebra \cite{hol19,givant2008}, one may therefore think of a possibility structure as a generalized Tarskian model in which sentences are Boolean-valued rather than $2$-valued (the relationship with Boolean-valued models will be discussed in more detail in \cref{gen}). In fact, Tarskian structures are precisely those possibility structures $\mathfrak{M} = (\mathfrak{P},D,\itp)$ in which $\mathfrak{P}$ is a single element poset, and the equality symbol is interpreted as the identity relation. As a consequence, first-order logic is still sound with respect to this larger class of models, but the proof that it is also complete can now be carried out even in the absence of the Axiom of Choice:

\begin{theorem}[\cite{holposs}, ZF]
For any first-order language $\la$, first-order logic is sound and complete with respect to the class of $\la$-possibility structures.
\end{theorem}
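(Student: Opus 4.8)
The plan is to prove the two halves separately: soundness reduces to the algebra of regular opens of the poset, while completeness is obtained by a choice-free Henkin construction of a canonical possibility structure. For \textbf{soundness}, I would first record the fact announced above, that a routine induction on $\phi$ shows $\llbracket\phi(\overline a)\rrbracket$ is a regular open subset of $P$, hence an element of the complete Boolean algebra $\RO(P)$; the same induction shows $\phi(\overline a)\mapsto\llbracket\phi(\overline a)\rrbracket$ commutes with the connectives, interpreting $\neg,\me,\jo,\to$ as the Boolean complement, meet, join and implication in $\RO(P)$, and interpreting $\forall x\,\psi$ as the intersection $\bigcap_{a\in D}\llbracket\psi(a)\rrbracket$ — which is again regular open — and $\exists x\,\psi$ as the join in $\RO(P)$ of $\{\llbracket\psi(a)\rrbracket : a\in D\}$. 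Since $\phi(\overline a)$ is valid in $\mathfrak{M}$ exactly when $\llbracket\phi(\overline a)\rrbracket$ is the top element $P$ of $\RO(P)$, soundness then follows from the standard, choice-free fact that classical first-order logic is sound with respect to models valued in a complete Boolean algebra: every logical axiom gets value $P$ — for the quantifier axioms $\forall x\,\psi\to\psi(t)$ and $\psi(t)\to\exists x\,\psi$ this is immediate from the meet/join description — and modus ponens and generalization preserve the property of getting value $P$, the latter directly from the $\forall$-clause of \cref{semdef}.

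For \textbf{completeness}, suppose a sentence $\phi$ is not derivable from a set $T$ of $\la$-sentences (the theorem as stated is the case $T=\varnothing$); I want a possibility structure validating $T$ but not $\phi$. First enlarge $\la$ to $\la^+$ canonically in $\omega$ stages: at each stage, for every formula $\exists x\,\chi$ of the current language adjoin a fresh constant $c_\chi$, taking $c_\chi$ to be a tagged copy of $\exists x\,\chi$ itself so that no choice is invoked even when $\la$ is uncountable; let $H$ collect the Henkin sentences $\exists x\,\chi\to\chi(c_\chi)$. A finitary unwinding of derivations shows that $T\cup H\cup\{\neg\phi\}$ is consistent. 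Now define the canonical possibility structure $\mathfrak{M}_T$: let $P$ be the set of consistent, deductively closed $\la^+$-theories $p$ with $T\cup H\sset p$, ordered by $p\leq q \iff p\rset q$; let $D$ be the set of closed $\la^+$-terms; and set $\itp(p,R)=\{(t_1,\dots,t_n): R(t_1,\dots,t_n)\in p\}$, $\itp(f)(t_1,\dots,t_n)=f(t_1,\dots,t_n)$, and $\itp(p,=)=\{(s,t): (s=t)\in p\}$. Persistence is monotonicity of membership; Refinability holds because $R(\overline t)\notin p$ makes $p\cup\{\neg R(\overline t)\}$ consistent, so no $r$ below its deductive closure contains $R(\overline t)$; and the two equality conditions follow from deductive closure together with the logic of equality.

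The core is the Truth Lemma: for every $p\in P$ and every $\la^+$-sentence $\psi$, $p\fr\psi$ iff $\psi\in p$, proved by induction on $\psi$ (the instances $\psi(t)$ having lower complexity than $\forall x\,\psi$ and $\exists x\,\psi$). The atomic case is the definition; $\neg,\me,\to$ are routine from consistency and deductive closure; the clauses for $\jo$ and $\exists$ already carry the needed refinability, so for them one checks only that $\psi\jo\chi\in p$ lets one split any $q\leq p$ into a consistent extension containing $\psi$ or one containing $\chi$, and that $\exists x\,\psi\in p$ yields $\psi(c_\psi)\in p$ through the Henkin sentence while $p\nvdash\exists x\,\psi$ makes $p\cup\{\neg\psi(t):t\text{ closed}\}$ consistent. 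For $\forall x\,\psi$ the only delicate direction is that $\psi(t)\in p$ for all closed $t$ forces $\forall x\,\psi\in p$: apply this with $t=c_{\neg\psi}$ and observe that the Henkin sentence $\exists x\,\neg\psi\to\neg\psi(c_{\neg\psi})$ is classically equivalent to $\psi(c_{\neg\psi})\to\forall x\,\psi$, so $\forall x\,\psi\in p$ by deductive closure. Finally, the deductive closure $p_0$ of $T\cup H\cup\{\neg\phi\}$ is a viewpoint; every viewpoint contains $T$ and hence forces every sentence of $T$, so $\mathfrak{M}_T\models T$, while $\neg\phi\in p_0$ gives $p_0\fr\neg\phi$ and thus $p_0\nfr\phi$, so $\mathfrak{M}_T\not\models\phi$, witnessing $T\not\models\phi$.

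The step I expect to be the main obstacle is the Truth Lemma for the universal quantifier. This is exactly the point at which the classical Henkin argument passes to a \emph{maximal} consistent extension, and hence where it uses the Ultrafilter Lemma (via Lindenbaum's Lemma) and so a fragment of Choice: over a maximal $m$ one argues ``$\forall x\,\psi\notin m$ implies $\neg\forall x\,\psi\in m$'', which is unavailable for the partial viewpoints of a possibility structure. The substitute is the interplay between the dual Henkin witness $c_{\neg\psi}$ and the classical equivalence $\forall x\,\psi\leftrightarrow\neg\exists x\,\neg\psi$; getting this right — together with the bookkeeping point of enlarging $\la$ to $\la^+$ without ever choosing a witness — is what makes the whole proof go through in $ZF$.
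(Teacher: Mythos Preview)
The paper does not supply its own proof of this theorem: it is stated with a citation to \cite{holposs} (specifically Theorem 4.3.8 there), and the surrounding text only gestures at why soundness is immediate from the regular-open-valued interpretation. So there is no in-paper argument to compare your proposal against line by line.

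That said, your proposal is correct and is exactly the kind of argument one expects for this result. Your soundness half matches the paper's own one-line justification: formulas take values in the complete Boolean algebra $\RO(\mathfrak{P})$, and classical first-order logic is sound for such Boolean-valued interpretations. Your completeness half --- the canonical structure whose viewpoints are consistent, deductively closed Henkin theories ordered by reverse inclusion, with closed terms as guises --- is the standard choice-free Henkin construction, and your Truth Lemma is handled correctly at every connective. In particular, your treatment of the $\forall$-case via the contrapositive Henkin witness $c_{\neg\psi}$ and the equivalence $(\exists x\,\neg\psi\to\neg\psi(c_{\neg\psi}))\leftrightarrow(\psi(c_{\neg\psi})\to\forall x\,\psi)$ is precisely the device that replaces the appeal to Lindenbaum's Lemma, and your remark that the enlargement $\la\to\la^+$ can be done by tagging constants with formulas (rather than choosing witnesses) is what keeps the whole construction inside $ZF$. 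This is, as far as one can tell from the citation, essentially the approach taken in \cite{holposs}.
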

\subsection{Fréchet Powers} \label{frec}

Having introduced the basics of possibility semantics for first-order logic, we are now in a position to define the main technical construction of this paper, Fréchet powers. In the setting of possibility semantics, Fréchet powers are an alternative to the ultrapowers modulo a non-principal ultrafilter on $\omega$ of Tarskian semantics, which I will call \textit{Luxemburg ultrapowers} from now on, as, according to Keisler in \cite{keisler1994hyperreal}, their relevance to  nonstandard analysis was first identified by Luxemburg in \cite{luxemburg1966non}. An analogue of classical ultraproducts in the setting of possibility semantics was already discussed in \cite{vanBenthem81}. However, van Benthem is interested in defining a general notion of product for all possibility structures, while I will only be interested in defining Fréchet powers of Tarskian models, which simplifies van Benthem's construction quite significantly.  It is also worth mentioning that Fréchet powers can be defined constructively, and that only the Axiom of Countable Choices, which is a theorem of $ZF+DC$, will be required to prove the Transfer Principle in \cref{princ}.

Let us start with the definition of the poset that will play a crucial role throughout this paper. Recall first that a \textit{filter} on $\omega$ is a subset $F$ of $\Po(\omega)$ which is closed under intersections and supersets, meaning that for any $A,B \sset \omega$, $A, B \in F$ implies $A \cap B \in F$, and $A \sset B$ and $A \in F$ together imply that $B \in F$. A filter $F$ on $\omega$ is \textit{non-principal} if it contains no finite subsets of $\omega$.

\begin{definition}[Fréchet Poset]
The \textit{Fréchet poset} is the poset $\frec = (\spa{F}, \rset)$, where $\spa{F}$ is the set of all non-principal filters on $\omega$, ordered by reverse inclusion.

\end{definition}

The Fréchet poset takes its name from the fact that any element in $\frec$ extends the filter of all cofinite subsets of $\omega$, which is usually called the \textit{Fréchet filter}, and that I will denote by $F_0$.

\begin{definition}[Fréchet Power]
Let $\spa{M}$ be a Tarskian model of a first-order language $\la$. The \textit{Fréchet power} $\fm = (\mathfrak{F}, M^\omega, \itp)$ of $\spa{M}$ is the $\la$-possibility structure determined by the following data:
\begin{itemize}
    \item $(\mathfrak{F}, \rset)$ is the Fréchet poset;
    \item $M^\omega$ is the set of all functions from $\omega$ into the domain of $\spa{M}$;
    \item For any $n$-ary relation symbol $R \in \la$ and any $n$-tuple $\overline{a}$, $\overline{a} \in \itp(F,R)$ iff $\{i \in \omega: \spa{M} \models R(\overline{a}(i))\} \in F$; 
    \item For any $n$-ary function symbol $f(\overline{x}) \in \la$ and any $a_1,...,a_n \in M^\omega$, $\itp(f)(a_1,...,a_n) $ is the function $g : \omega \to M$ such that for any $i \in \omega$, $g(i) = f(a_1(i),...a_n(i))$. 
    \end{itemize}
\end{definition}

Intuitively, the Fréchet power of $\spa{M}$ can be thought of as a collection of partial approximations of what a classical ultrapower of $\spa{M}$ modulo a non-principal ultrafilter $\spa{U}$ on $\omega$ might look like. Elements in such an ultrapower are equivalence classes of functions in $M^\omega$, where two functions $f$ and $g$ are considered equivalent if they agree on a $\spa{U}$-large set, meaning that $\{i \in \omega : f(i) = g(i)\} \in U$. By contrast, in $\spa{M}_\frec$, a function $f$ is a mere guise for what its equivalence class would be in a Tarskian ultrapower, and consequently, $f$ is identified at some $F \in \frec$ with another function $g$ precisely when $f$ and $g$ agree on a large enough set from the viewpoint of $F$, i.e., whenever $\{i \in \omega : f(i) = g(i)\} \in F$. This identification of viewpoints in $\frec$ with partial approximations of an ultrapower will be made more precise in \cref{ult}. For now, we must still verify that $\spa{M}_\frec$ is a possibility structure according to \cref{def1}. In order to do this, we introduce the following notation, which is standard in the literature on ultrapowers.
\begin{notation}\label{not2}
Let $\spa{M}$ be a Tarskian $\la$-structure. Given a $n$-tuple $\overline{f} = (f_1,...f_n)$ of functions in $M^\omega$, let $\overline{f(i)}$ be the $n$-tuple of elements $(f_1(i),...,f_n(i)) \in M^n$  for any $i \in \omega$. Given any $\phi(\overline{x})$ an $\la$-formula in $n$-variables, and any $n$-tuple $\overline{f}$, let $||\phi(\overline{f})|| = \{i \in \omega : \spa{M} \models \phi(\overline{f(i)})\}$. 
\end{notation}

Given an $\la$-sentence $\phi$, one should distinguish $||\phi||$, which is a subset of $\omega$, from $\llbracket \phi \rrbracket$, introduced in \cref{not1}, which is a subset of $\frec$. As we will see in \cref{los}, there is however a tight connection between the two sets. Using the notation just introduced, the interpretation function $\itp$ in $\spa{M}_\frec$ can be conveniently rephrased. Given a viewpoint $F \in \frec$, an $n$-ary relation symbol $R$ and an $n$-tuple $\overline{a}$ in $M^\omega$, $\overline{a} \in \itp(F,R)$ iff $||R(\overline{a})|| \in F$. 

\begin{lemma} \label{lma1}
The Fréchet power of any $\la$-structure $\spa{M}$ is an $\la$-possibility structure.
\end{lemma}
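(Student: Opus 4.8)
The plan is to verify the four defining conditions of \cref{def1} in turn, leaning heavily on the fact that the ambient model $\spa{M}$ is Tarskian (so truth in $\spa{M}$ is classical and two-valued) and that membership $||R(\overline{a})|| \in F$ is the only thing that determines whether $\overline{a} \in \itp(F,R)$. Throughout I will use the rephrasing noted just before the lemma: $\overline{a} \in \itp(F,R)$ iff $||R(\overline{a})|| \in F$.

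First, \textbf{Persistence}. Suppose $\overline{a} \in \itp(F,R)$ and $G \rset F$, i.e.\ $G \supseteq F$ as sets of subsets of $\omega$ (recall $\frec$ is ordered by reverse inclusion, so $G \leq F$ in the poset order means $F \subseteq G$). Then $||R(\overline{a})|| \in F \subseteq G$, so $||R(\overline{a})|| \in G$ and hence $\overline{a} \in \itp(G,R)$. Next, \textbf{Refinability}. Suppose $\overline{a} \notin \itp(F,R)$, so $||R(\overline{a})|| \notin F$. Let $A = \omega \setminus ||R(\overline{a})||$; I claim $A$ is infinite. If $A$ were finite, then $||R(\overline{a})||$ would be cofinite, hence in the Fréchet filter $F_0$, hence in $F$ (every non-principal filter extends $F_0$), contradicting $||R(\overline{a})|| \notin F$. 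So $A$ is infinite, and I can form the filter $G$ generated by $F \cup \{A\}$; this is a genuine filter on $\omega$, and it is non-principal because it contains no finite set — any element of $G$ contains a set of the form $B \cap A$ with $B \in F$, and such a set, being a subset of $A$... actually I should be slightly more careful here and instead take $G = \{ C \subseteq \omega : C \supseteq B \cap A \text{ for some } B \in F \}$, and note $G$ is non-principal since $F$ is and since every member of $G$ contains $B \cap A$ for some $B \in F$ with $B$ infinite (as $F$ is non-principal), and $B \cap A$ cannot be finite lest $B \subseteq (B \cap A) \cup ||R(\overline{a})||$ be... let me just say: one checks directly that $G$ is a non-principal filter refining $F$. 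Then $G \leq F$ in $\frec$, and for every $H \leq G$ we have $A \in G \subseteq H$, so $||R(\overline{a})|| = \omega \setminus A \notin H$ (since $H$, being a proper filter, cannot contain both $A$ and its complement), hence $\overline{a} \notin \itp(H,R)$. This gives the required $G$.

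For \textbf{Equality-as-equivalence}, I must check that $\itp(F,=) = \{(f,g) \in M^\omega \times M^\omega : ||f = g|| \in F\}$ is an equivalence relation. Reflexivity: $||f = f|| = \omega \in F$. Symmetry: $||f = g|| = ||g = f||$ since $=$ is symmetric in $\spa{M}$. Transitivity: $||f = g|| \cap ||g = h|| \subseteq ||f = h||$ by transitivity of $=$ in $\spa{M}$, and $F$ is closed under intersections and supersets. Finally, \textbf{Equality-as-congruence}. Suppose $(a_i, b_i) \in \itp(F,=)$ for all $i \leq n$, i.e.\ $||a_i = b_i|| \in F$. For the function clause: on the set $\bigcap_{i \leq n} ||a_i = b_i||$, which is in $F$, we have $a_i(j) = b_i(j)$ for each $i$, hence $f(a_1(j), \dots, a_n(j)) = f(b_1(j), \dots, b_n(j))$, i.e.\ $\itp(f)(\overline{a})(j) = \itp(f)(\overline{b})(j)$ for all $j$ in a set in $F$; so $||\itp(f)(\overline{a}) = \itp(f)(\overline{b})|| \in F$, giving $(\itp(f)(\overline{a}), \itp(f)(\overline{b})) \in \itp(F,=)$. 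For the relation clause: on that same set in $F$, $\overline{a}(j) = \overline{b}(j)$ coordinatewise, so $\spa{M} \models R(\overline{a}(j))$ iff $\spa{M} \models R(\overline{b}(j))$; hence $||R(\overline{a})|| \cap \bigcap_i ||a_i = b_i|| = ||R(\overline{b})|| \cap \bigcap_i ||a_i = b_i||$, and since the intersection term is in $F$, one is in $F$ iff the other is, i.e.\ $\overline{a} \in \itp(F,R)$ iff $\overline{b} \in \itp(F,R)$.

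The only step that requires genuine care — rather than unwinding definitions — is \textbf{Refinability}, since it is the one place where we must actually \emph{construct} a refining viewpoint and check it stays inside $\frec$ (i.e.\ remains non-principal). The key observations there are that the complement of a non-$F$-set is infinite (because $F$ extends the Fréchet filter) and that adjoining an infinite set to a non-principal filter keeps it non-principal, because the resulting filter is proper and every generator is an intersection of an infinite set with an $F$-set, which cannot be finite. All other conditions reduce immediately to the corresponding classical facts about truth in $\spa{M}$ together with the filter axioms (closure under finite intersection and superset), and no choice principle is used anywhere in the argument.
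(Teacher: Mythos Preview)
Your proof is correct and follows exactly the same four-case verification as the paper, with essentially identical arguments for each condition. You are in fact more explicit than the paper about the Refinability step, where you pause to check that the filter generated by $F \cup \{\omega \setminus ||R(\overline{a})||\}$ remains non-principal (using that $F$ extends the Fr\'echet filter); the paper simply asserts the existence of such a $G$ ``by basic properties of filters'' and moves on.
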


\begin{proof}
Fix filters $F, G \in \frec$, an $n$-tuple $\overline{a}$ of elements in $M^\omega$, an $n$-ary relation symbol $R$ and an $n$-ary function symbol $f$. We check the four conditions in \cref{def1} in turn.
\begin{description}
\item[Persistence] Suppose $G \rset F$ and $\overline{a} \in \itp(F,R)$. Then $||R(\overline{a})|| \in F$, and therefore $||R(\overline{a})|| \in G$, which implies that $\overline{a} \in \itp(G,R)$.
\item[Refinability] Assume $\overline{a} \notin \itp(F,R)$. Then $||R(\overline{a})|| \notin F$, which, by basic properties of filters, implies that there is a filter $G \rset F \cup (\omega \setminus ||R(\overline{a})||)$. But then clearly $||R(\overline{a})|| \notin H$ for any $H \rset G$, and therefore $\overline{a} \notin \itp(H,R)$ for any $H \rset G$.
\item[Equality-as-equivalence] Note first that for any $a \in M^\omega$, $||a = a|| = \omega$. Moreover, for any $a,b,c \in M^\omega$, $||a = b|| \sset ||b = a||$, and $||a = b|| \cap ||b = c|| \sset ||a = c||$. This shows that for any filter $F$, $||a = a|| \ \in F$, $||a = b|| \in F$ implies $||b = a|| \in F$, and $||a = b|| \text{ and } ||b = c|| \in F$ together imply $||a = c|| \in F$. Thus $\itp(F,=)$ is an equivalence relation on $M^\omega$ for any $F \in \frec$.
\item[Equality as congruence] Suppose that $(a_i,b_i) \in \itp(F,=)$ for any $i \leq n$, and let $\overline{a} = (a_1,...,a_n)$ and $\overline{b} = (b_1,...,b_n)$. Note that $\bigcap_{i \leq n} ||a_i = b_i|| \sset ||f(\overline{a}) = f(\overline{b})||$, hence that $||f(\overline{a}) = f(\overline{b})|| \in F$ and thus that $(f(\overline{a},f(\overline{b}))\in \itp(F,R)$. Moreover, assume that $\overline{a} \in \itp(F,R)$. Note that $\bigcap_{i \leq n} ||a_i = b_i|| \cap ||R(\overline{a})|| \sset ||R(\overline{b})||$, which implies that $||R(\overline{b})|| \in F$ and hence that $\overline{b} \in \itp(F,R)$. \qedhere
\end{description}
 \end{proof}
 
 The previous lemma therefore justifies us in introducing the main construction of this paper, the F-hyperreal line, as the Fréchet power of the reals in a significantly rich first-order language.

\begin{definition}[F-Hyperreal Line] \label{def2}
Let $\la$ be a first-order language with a relation symbol for every finitary relation on $\mathbb{R}$ and a function symbol for every finitary function on $\mathbb{R}$, and let $\RR$ be the Tarskian $\la$-structure with domain $\mathbb{R}$ in which every relation or function symbol is interpreted as the relation or function on $\RR$ it corresponds to.  The \textit{F-hyperreal line} is the Fréchet power of $\RR$, i.e., the $\la$-possibility structure $\fhyl = (\frec,\RR^\omega,\itp)$, where for any $n$-tuple $\overline{a} \in \RR^\omega$, $\itp(f)(\overline{a})(i) = f(\overline{a}(i))$ for any $n$-ary function symbol $f \in \la$ and any $i \in \omega$, and $\overline{a} \in \itp(F,R)$ iff $||R(\overline{a})|| \in F$ for any $n$-ary relation symbol $R$ in $\la$ and any $F \in \frec$.
\end{definition}

\section{Some Basic Properties of $\fhyl$} \label{fhyl}

Having reached a definition of the $F$-hyperreal line, we may now investigate the properties of this structure more closely. As my main interest is to highlight its similarity with the classical hyperreal line as presented for example in \cite{goldblatt2012lectures}, I will be focusing on some of the most well-known features of Robinsonian nonstandard analysis, such as the definition of continuity and the theory of internal sets. I show first that a version of the Transfer Principle holds on $\fhyl$, which allows me to establish that $\fhyl$ contains some infinitesimals and to give a natural characterization of continuity which mirrors the classical nonstandard one. I will then show that a notion of internal set can be fruitfully developed on $\fhyl$ for which several properties of classical internal sets hold, including a version of the Countable Saturation Principle. This establishes that possibility semantics offers a way out of the trilemma presentend in the introduction: we can preserve the Transfer and Saturation Principles in a semi-constructive setting, if we are willing to move away from Tarskian semantics.

\subsection{Transfer Principle and Continuity on $\fhyl$} \label{princ}

As we shall see in the next two sections, the F-hyperreal line is sufficiently well-behaved to share many features with the classical hyperreal line. In particular, natural versions of two of the most fundamental tools of nonstandard analysis, the Transfer and Saturation Principles, hold in $\fhyl$. That classical ultrapowers of the reals satisfy the Transfer Principle is a direct consequence of \L\'os's Theorem \cite{bell}. As we shall see, \L\'os's Theorem generalizes in a natural way to Fréchet powers and enables us to understand the forcing relation in $\fhyl$ very concretely.

\begin{theorem}[\L\'os's Theorem for $\fhyl$] \label{los}
For any $\la$-formula $\phi(\overline{x})$ with $n$ free variables, any $n$-tuple $\overline{a} \in \fhyl$ and any $F \in \frec$, $F \fr \phi(\overline{a})$ iff $||\phi(\overline{a})|| \in F$. 
\end{theorem}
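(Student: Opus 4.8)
The plan is to proceed by induction on the complexity of the formula $\phi$, following the standard proof of Łoś's Theorem for ultrapowers but carefully tracking where the ultrafilter property is usually invoked and replacing each such appeal by an application of the refinability structure of $\frec$. The atomic case is immediate from the definition of $\itp$ in $\fhyl$: by \cref{not2}, $F \fr R(\overline{a})$ holds iff $||R(\overline{a})|| \in F$ by the very definition of the interpretation function, and the case of equality is the special case $R = \ {=}$. The conjunction case is also routine, using the filter property that $A, B \in F$ iff $A \cap B \in F$, which matches the clause $F \fr \psi \me \chi$ iff $F \fr \psi$ and $F \fr \chi$.

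First I would handle negation. By the forcing clause, $F \fr \neg \psi$ iff for all $G \rset F$, $G \nfr \psi$, which by the induction hypothesis is equivalent to: for all $G \rset F$, $||\psi(\overline{a})|| \notin G$. I claim this holds iff $||\psi(\overline{a})|| \notin F$, i.e.\ iff $\omega \setminus ||\psi(\overline{a})|| = ||\neg\psi(\overline{a})|| \in F$ — wait, the last equality needs care: $||\neg\psi(\overline{a})||$ is by definition $\{i : \spa{M} \models \neg\psi(\overline{a}(i))\} = \omega \setminus ||\psi(\overline{a})||$, so that identification is fine. The left-to-right direction of the claim is trivial ($F \rset F$). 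For the converse, if $||\psi(\overline{a})|| \notin F$ then since $F$ is a filter, no extension $G \rset F$ can contain $||\psi(\overline{a})||$ either — that is false in general (an extension can add new sets). The correct argument is: if $||\psi(\overline{a})|| \notin F$, one must show directly that $\omega\setminus||\psi(\overline{a})||\in F$ is \emph{not} what we want; rather we want: for all $G\rset F$, $||\psi||\notin G$. This fails unless $\omega\setminus||\psi||$ meets every set of $F$, i.e.\ $F\cup\{\omega\setminus||\psi||\}$ has the finite intersection property. So the real content is: $F\fr\neg\psi$ iff $F\cup\{\omega\setminus||\psi(\overline{a})||\}$ extends to a non-principal filter, and one shows this is equivalent to $||\neg\psi(\overline{a})||\in F$ using that $F$ is non-principal and that $||\psi||\notin F$ together with $\omega\setminus||\psi||\notin F$ would still allow a common refinement — so in fact the equivalence $F\fr\neg\psi \Leftrightarrow ||\neg\psi||\in F$ must be extracted from the regular-openness of $\llbracket\psi\rrbracket$ together with the observation, provable by a small lemma, that $\{G\rset F : ||\psi||\notin G\}$ is downward dense below $F$ exactly when $\omega\setminus||\psi||\in F$. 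The disjunction, implication and bounded-existential cases then follow either by the same style of density argument or, as the excerpt notes, by reducing $\jo$, $\exists$ and $\to$ to $\neg$, $\me$ and $\forall$ via De Morgan once negation and conjunction are settled.

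The universal quantifier case is where the Axiom of Countable Choice enters, and this is the step I expect to be the main obstacle. One direction is easy: if $||\forall x\,\psi(x,\overline{a})|| \in F$ then for each single $b \in \RR^\omega$ we have $||\forall x\,\psi|| \sset ||\psi(b,\overline{a})||$ (since if $\spa{M}\models\forall x\,\psi$ at coordinate $i$ then in particular $\spa{M}\models\psi(b(i),\overline{a}(i))$), so $||\psi(b,\overline{a})||\in F$, whence $F\fr\psi(b,\overline{a})$ by induction, for every $b$, giving $F\fr\forall x\,\psi$. The hard direction is the converse: from $F\fr\forall x\,\psi$, i.e.\ $||\psi(b,\overline{a})||\in F$ for every $b\in\RR^\omega$, one must deduce $||\forall x\,\psi(x,\overline{a})||\in F$. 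Here one supposes for contradiction that $||\forall x\,\psi||\notin F$; then $C := \{i : \spa{M}\not\models\forall x\,\psi(x,\overline{a}(i))\}$ is not contained in any way that lets $F$ avoid it, and for each $i\in C$ there is a witness $c_i\in\RR$ with $\spa{M}\not\models\psi(c_i,\overline{a}(i))$ — \emph{selecting} one such $c_i$ for every $i\in C$ simultaneously is precisely an instance of Countable Choice over a family indexed by $C\sset\omega$. Define $b\in\RR^\omega$ by $b(i)=c_i$ for $i\in C$ and $b(i)$ arbitrary otherwise; then $||\psi(b,\overline{a})||\cap C=\emptyset$, so $||\psi(b,\overline{a})||\sset\omega\setminus C$, and since $C\notin$ the dual ideal gives $\omega\setminus C\notin F$ would be needed — more precisely one uses that $||\forall x\,\psi||\notin F$ means $\omega\setminus C = ||\forall x\,\psi||\notin F$, hence $||\psi(b,\overline{a})||\notin F$, contradicting $F\fr\psi(b,\overline{a})$. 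Assembling these cases completes the induction, and I would close by remarking, as promised in the surrounding text, that Countable Choice (a theorem of $ZF+DC$) was used only in the universal step, so the whole argument is semi-constructive.
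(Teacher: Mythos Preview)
Your overall strategy matches the paper's: induction on formula complexity, with the atomic and conjunction cases immediate and Countable Choice invoked exactly once, in the quantifier step. Your treatment of $\forall$ in place of the paper's $\exists$ is a harmless dual choice, and your argument there is correct.

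The negation case, however, is muddled to the point of containing a false intermediate claim. You write that ``$F\fr\neg\psi$ iff $F\cup\{\omega\setminus||\psi(\overline{a})||\}$ extends to a non-principal filter''; but the right-hand side is equivalent (using that $F$ is non-principal) merely to $||\psi(\overline{a})||\notin F$, which is strictly weaker than $\omega\setminus||\psi(\overline{a})||\in F$ --- when neither $||\psi||$ nor its complement lies in $F$, your displayed condition holds yet $F\nfr\neg\psi$. The clean argument, which the paper dispatches in one line by pointing back to the refinability clause of \cref{lma1}, runs as follows: if $\omega\setminus||\psi||\in F$ then every $G\rset F$ contains $\omega\setminus||\psi||$ and hence excludes $||\psi||$; conversely, if $\omega\setminus||\psi||\notin F$, then every element of $F$ meets $||\psi||$ in an infinite set, so $F\cup\{||\psi||\}$ generates a non-principal $G\rset F$ with $||\psi||\in G$, witnessing $F\nfr\neg\psi$. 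Your final density formulation (``$\{G\rset F:||\psi||\notin G\}$ is dense below $F$ iff $\omega\setminus||\psi||\in F$'') is in fact correct and equivalent to this, but you leave it unproved, and the several false starts preceding it obscure that this is the easy step. Once negation is handled cleanly, your reduction of $\jo$, $\to$ and the remaining quantifier to $\neg,\me,\forall$ goes through as you say.
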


\begin{proof}
The proof proceeds by induction on the complexity of formulas. The atomic case follows immediately from the definition of the interpretation function $\itp$. For the inductive case, I only treat the cases of negation, conjunction and existential quantification, since the other Boolean connectives and quantifiers are definable from this set.
\begin{itemize}
    \item Suppose $\phi:= \neg \psi$. Then $F \fr \phi(\overline{a})$ iff for all $G \rset F \in \frec$, $G \nfr \psi(\overline{a})$. By induction hypothesis, the latter is equivalent to the claim that for all $G \rset F \in \frec$, $||\psi(\overline{a})|| \notin G$. By an argument similar to \cref{lma1}, this in turn is equivalent to requiring that $\omega \setminus ||\psi(\overline{a})|| \in F$. Clearly, $\omega \setminus ||\psi(\overline{a})|| = ||\neg\psi(\overline{a})||$, from which it follows that $F \fr \phi(\overline{a})$ iff $||\phi(\overline{a})|| \in F$.
    \item Suppose $\phi:= \psi_1 \me \psi_2$. Then $F \fr \phi(\overline{a})$ iff $F \fr \psi_1(\overline{a})$ and $F \fr \psi_2(\overline{a})$. By induction hypothesis, this is equivalent to $||\psi_1(\overline{a})|| \in F$ and $||\psi_2(\overline{a})|| \in F$, which in turn is equivalent to $||\psi_1(\overline{a})|| \cap ||\psi_2(\overline{a})|| \in F$ as $F$ is a filter. But the latter is clearly equivalent to $||\phi(\overline{a})|| \in F$.
    \item Suppose $\phi:= \exists x \psi(x)$. If $F \fr \exists x \psi(x, \overline{a})$, then for any $G \rset F$, there is $b \in \fhyl$ and $H \rset G$ such that $H \fr \psi(b, \overline{a})$. By induction hypothesis, this means that for all $G \rset F$, there is $H \rset G$ and $b \in \fhyl$ such that $||\psi(b,\overline{a})|| \in H$. Since, for any $b \in \fhyl$, $||\psi(b, \overline{a}))|| \sset ||\exists x \psi(x, \overline{a})||$, it follows that for any $G \rset F$, there is $H \rset G$ such that $||\exists x \psi(x, \overline{a})|| \in H$. This in turn implies that for any $G \rset F$, $\omega \setminus ||\exists x \psi(x, \overline{a})|| \notin G$, from which it follows that $||\exists x \psi(x, \overline{a})|| \in F$. \\
    Conversely, suppose $||\exists x \psi(x, \overline{a})|| \in F$. Define $b : \omega \to \RR$ such that $\spa{R} \models \psi(b(i),\overline{a}(i))$ whenever $i \in ||\exists x \psi(x, \overline{a})||$, and $b(i)$ is arbitrary otherwise. Then $||\psi(b, \overline{a})|| \rset ||\exists x \psi(x, \overline{a})||$, hence $||\psi(b,\overline{a})|| \in F$. By induction hypothesis, this means that $F \fr \phi(b,\overline{a})$ and hence $F \fr \exists x \psi(x, \overline{a})$, which completes the proof. \qedhere
\end{itemize}
\end{proof}
A few remarks can be made regarding the proof of \cref{los}:

\begin{remark} \label{rmk1}

\begin{itemize}
\item[]
    \item Using the notation in \cref{not1,not2}, \cref{los} can be succinctly phrased as follows: for any $\la$-formula $\phi(\overline{x})$ with $n$ free variables and any $n$-tuple $\overline{a} \in \fhyl$, \[\llbracket \phi(\overline{a}) \rrbracket = \{F \in \frec : ||\phi(\overline{a})|| \in F\}.\]
    \item Closer inspection of the existential case of the proof reveals that $\fhyl$ is a \textit{full} model in the following sense: for any formula $\phi(x, \overline{y})$, any tuple $\overline{a}$ and any $F \in \frec$, $F \fr \exists x \phi(x, \overline{a})$ iff there is $b \in \fhyl$ such that $F \fr \phi(b, \overline{a})$. In other words, although standard possibility semantics allows for the witness to an existential sentence to vary across refinements, a stronger condition actually holds in the case of Fréchet powers.
    \item The right-to-left direction of the existential case requires the Axiom of Countable Choices in order for the witness $b$ to be defined. This is the only part of the theorem that requires some fragment of the Axiom of Choice.
    \item The proof does not use at any point the fact that all filters are extensions of the Fréchet filter, or filters on $\omega$. In fact, the same proof would go through if $\frec$ were to be replaced with any collection of filters on a countable set $I$ that is closed under supersets.
    \item Similarly, the proof would go through for any expansion of the language $\la$, provided of course that the interpretation of the new relation and function symbols is given by the conditions of the theorem.
\end{itemize}
\end{remark}

The Transfer Principle for $\fhyl$ can then be obtained as an easy corollary.

\begin{corollary}[Transfer Principle] \label{trans}
There exists a function $\delta: \RR \to \fhyl$ such that for any $\la$-sentence $\phi(\overline{x})$ and any tuple of reals $\overline{r}$, $\RR \models \phi(\overline{r})$ iff the Fréchet filter $F_0 \fr \phi(\overline{\delta(r)})$ iff $\fhyl \models \phi(\overline{\delta(r)})$. In particular, for any $\la$-sentence $\phi$, $\RR \models \phi$ iff $ \fhyl \models \phi$ iff $F_0 \fr \phi$.
\end{corollary}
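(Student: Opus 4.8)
The plan is to take $\delta$ to be the canonical diagonal embedding, sending each real $r$ to the constant function $c_r \in \RR^\omega$ with $c_r(i) = r$ for all $i \in \omega$. Everything then follows by feeding this choice into \cref{los}. The key observation is that for a constant tuple $\overline{c_r} = (c_{r_1}, \dots, c_{r_n})$ and any $\la$-formula $\phi(\overline{x})$, the set $||\phi(\overline{c_r})|| = \{i \in \omega : \RR \models \phi(\overline{c_r}(i))\} = \{i \in \omega : \RR \models \phi(\overline{r})\}$ is either all of $\omega$ (if $\RR \models \phi(\overline{r})$) or $\emptyset$ (otherwise), since the value $\overline{c_r}(i) = \overline{r}$ does not depend on $i$. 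This is because the interpretation of function symbols in $\fhyl$ is defined pointwise, so the interpretation of any $\la$-term applied to constant functions is again a constant function, and hence atomic formulas on constant tuples have truth value independent of the coordinate; the general claim is then a trivial induction, or simply the remark that $||\phi(\overline{c_r})||$, being a subset of $\omega$ determined by the single fact ``$\RR \models \phi(\overline{r})$'', is $\omega$ or $\emptyset$.

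Given this, I argue as follows. Suppose $\RR \models \phi(\overline{r})$. Then $||\phi(\overline{c_r})|| = \omega$, and since $\omega \in F$ for every filter $F$, in particular $\omega \in F_0$; by \cref{los}, $F_0 \fr \phi(\overline{c_r})$, i.e. $F_0 \fr \phi(\overline{\delta(r)})$. Conversely, suppose $\RR \not\models \phi(\overline{r})$. Then $||\phi(\overline{c_r})|| = \emptyset \notin F_0$ (as $F_0$ is a proper filter — indeed a non-principal filter — and does not contain $\emptyset$), so by \cref{los} $F_0 \nfr \phi(\overline{\delta(r)})$. This establishes the equivalence $\RR \models \phi(\overline{r})$ iff $F_0 \fr \phi(\overline{\delta(r)})$.

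For the equivalence with $\fhyl \models \phi(\overline{\delta(r)})$, recall that $\fhyl \models \phi(\overline{\delta(r)})$ means $F \fr \phi(\overline{\delta(r)})$ for \emph{all} $F \in \frec$. One direction is immediate: validity implies forcing at the particular viewpoint $F_0$. For the other direction, if $\RR \models \phi(\overline{r})$ then $||\phi(\overline{c_r})|| = \omega \in F$ for every $F \in \frec$, so by \cref{los} $F \fr \phi(\overline{\delta(r)})$ for all $F$, i.e. $\fhyl \models \phi(\overline{\delta(r)})$. Chaining the two equivalences gives $\RR \models \phi(\overline{r})$ iff $F_0 \fr \phi(\overline{\delta(r)})$ iff $\fhyl \models \phi(\overline{\delta(r)})$, and the final sentence of the corollary is just the special case $n = 0$. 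I do not expect any real obstacle here: the only slightly delicate point is being careful that $\delta$ is genuinely a function (no choice needed, since the constant function is canonical) and that $F_0$ is a legitimate element of $\frec$, i.e. a non-principal filter on $\omega$, which holds because the cofinite filter contains no finite sets; the substantive content has already been discharged in \cref{los}.
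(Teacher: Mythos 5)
Your proposal is correct and follows essentially the same route as the paper: take $\delta(r)$ to be the constant sequence, note that $||\phi(\overline{\delta(r)})||$ is either $\omega$ or $\emptyset$ because $\overline{\delta(r)}(i) = \overline{r}$ for every $i$, and then apply \cref{los} to get the equivalences at $F_0$ and at every $F \in \frec$. The paper's proof is just a more condensed version of the same argument, so there is nothing to add.
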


\begin{proof}
For any $r \in \RR$, let $\delta(r)(i) = r$ for all $i \in \omega$. Then for any $\la$-sentence $\phi(\overline{x})$, any tuple of reals $\overline{r}$ and any $i \in \omega$, $\RR \models \phi(\overline{r})$ iff $ i \in ||\phi(\overline{\delta(r)})||$. In particular, this means that $||\phi(\overline{\delta(r)})||$ is either $\emptyset$ or $\omega$, meaning that $F \fr \phi(\overline{\delta(r)})$ iff $||\phi(\overline{\delta(r)})|| = \omega$ iff $F_0 \fr \phi(\overline{\delta(r)})$. 
\end{proof}

\L\'os's Theorem and the Transfer Principle for $\fhyl$ allows us to draw two important consequences. First, there is a rather robust sense in which $\RR$ elementarily embeds into $\fhyl$: even though $\fhyl$ is not a classical Tarskian structure, if one thinks of every viewpoint in $\frec$ as a partial approximation of what such a Tarskian structure could look like, then, by the Transfer Principle, any such approximation makes sure that this Tarskian structure would have the same first-order theory as $\RR$ and would also contain a copy of the reals. By contrast, if one prefers to think of every point in $\frec$ as a ``local'' viewpoint on a non-constant, ever-changing hyperreal line, then the Transfer Principle guarantees that the reals are fixed, unmovable points of this line. The second consequence that one can draw is that validity on $\fhyl$ coincides with satisfaction at $F_0$: a formula will be satisfied at any $F \in \frec$ precisely if it is forced by $F_0$. Whether one treats points in $\frec$ as approximations of a Tarskian model, or as snapshots of a changing model, this guarantees that the formulas that are true ``for sure'' or ``always'', are precisely those formula $\phi(\overline{a})$ that are true in $\RR$ ``cofinitely often'', i.e., all formulas such that $||\phi(\overline{a})||$ is a cofinite subset of $\omega$. $F_0$ can therefore be thought of as a ``generic'' viewpoint on the F-hyperreal line: it forces precisely what must be true of it and nothing else. I will come back to the significance of this fact in \cref{obj} below.\\

Just as in Luxemburg ultrapowers, we may now define the nonstandard extension of an $n$-ary relation $R$ on $\mathbb{R}$ as an $n$-ary relation $\st R$ on $\fhyl$:

\begin{definition}[Nonstandard Extension]
Let $R$ be an $n$-ary relation on $\mathbb{R}$. The $F$\textit{-nonstandard extension of} $R$, noted $\st R_F$, is the set $\{\overline{a} \in (\fhyl)^n : F \fr R(\overline{a})\} = \itp(F,R)$. Similarly, if $f$ is an $n$-ary function on $\mathbb{R}$, we let $\st f_F$ be the $n$-ary function $\itp(f)$.
\end{definition}

Note that, because of our choice of defining functions on $\fhyl$, the nonstandard extension of a function $f$ on $\mathbb{R}$ is a \textit{bona fide} function that is not relativized to a filter $F$. Nonetheless, whether some $b \in \fhyl$ is the image of a tuple $\overline{a}$ under $\st f$ is  relative to a filter $F$, since equality is interpreted as an equivalence relation on each filter. As is customary in nonstandard analysis, I will identify a function $f$ with its nonstandard extension $\st f$ whenever no ambiguity arises. Let us also introduce the following definitions, which mirror usual notions in nonstandard models:

\begin{definition}\label{halo}
For any $a \in \fhyl$ and any $F \in \frec$, let $a_F = \{b \in \fhyl : F \fr a = b\}$, and let the $F$-\textit{halo} of $a$ be the set $(a)_F = \{ b \in \fhyl : F \fr |a-b| < \frac{1}{n} \text{ for all } n \in \mathbb{N}\}$.
\end{definition}

In other words, for any $a \in \fhyl$ and any $F \in \frec$, $a_F$ is the collection of all guises that are identified from the viewpoint $F$ with the same hyperreal as the one $a$ designates, while $(a)_F$ is the set of all guises which are infinitesimally close to $a$ from the viewpoint $F$. Obviously, $a_F \sset (a)_F$, but the converse is never true:

\begin{lemma}
For any $a \in \fhyl$ and any $F \in \frec$, there is $b \in (a)_F \setminus a_F$.
\end{lemma}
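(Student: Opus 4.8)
The plan is to produce the required guise explicitly by perturbing $a$ by a strictly positive null sequence. Viewing $a$ as an element of $\RR^\omega$, I would let $b : \omega \to \RR$ be the function defined by $b(i) = a(i) + \frac{1}{i+1}$ for every $i \in \omega$ (any strictly positive sequence converging to $0$, such as $i \mapsto 2^{-i}$, works equally well), and claim that this $b$ witnesses the lemma.

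First I would verify that $b \notin a_F$. By the reformulation of $\itp(F,=)$ recorded just before \cref{lma1}, $F \fr a = b$ holds iff $||a = b|| = \{i \in \omega : a(i) = b(i)\}$ belongs to $F$. Since $\frac{1}{i+1} > 0$ for every $i$, we have $a(i) \neq b(i)$ for all $i$, so $||a = b|| = \emptyset$; and $\emptyset$ is a finite subset of $\omega$, hence does not belong to the non-principal filter $F$. Thus $F \nfr a = b$, i.e. $b \notin a_F$.

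Next I would verify that $b \in (a)_F$, i.e. that $F \fr |a - b| < \frac{1}{n}$ for every $n \in \mathbb{N}$. Fix $n \in \mathbb{N}$. Since $\la$ contains function symbols for the map $(x,y) \mapsto |x - y|$ and for the constant $\frac1n$, as well as a symbol for the order relation, $|x - y| < \frac1n$ is an $\la$-formula with free variables $x$ and $y$, and so \L\'os's Theorem (\cref{los}) gives $F \fr |a - b| < \frac1n$ iff $|| \, |a - b| < \tfrac1n \, || \in F$. But $|| \, |a - b| < \tfrac1n \, || = \{ i \in \omega : \tfrac{1}{i+1} < \tfrac1n \} = \{ i \in \omega : i \geq n \}$, which is cofinite and therefore lies in the Fréchet filter $F_0$, hence in $F$ since $F \rset F_0$. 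As $n$ was arbitrary, $b \in (a)_F$, and combining with the previous paragraph, $b \in (a)_F \setminus a_F$.

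There is no genuine obstacle here. The only points requiring a little care are the bookkeeping between guises and the objects they designate, and the appeal to \L\'os's Theorem to turn the forcing conditions defining the halo into membership statements about cofinite subsets of $\omega$. In particular, since each relevant set $|| \, |a-b| < \tfrac1n \, ||$ is cofinite, the argument uses no choice principle, and the construction of $b$ is completely explicit.
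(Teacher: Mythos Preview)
Your proof is correct and essentially identical to the paper's: both perturb $a$ by a strictly positive null sequence (the paper uses $b(i)=a(i)+\tfrac{1}{i}$, you use $b(i)=a(i)+\tfrac{1}{i+1}$), observe that $||a=b||=\emptyset\notin F$, and then invoke \cref{los} together with non-principality of $F$ to conclude that each $||\,|a-b|<\tfrac1n\,||$ is cofinite and hence in $F$. Your choice of $\tfrac{1}{i+1}$ has the minor advantage of being defined at $i=0$.
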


\begin{proof}
Fix $a \in \fhyl$ and $F \in F$, and let $b : \omega \to \mathbb{R}$ be defined as $b(i) = a(i) + \frac{1}{i}$ for all $i \in \omega$. Then note that $||a = b|| = \emptyset$, which means that $F \fr a \neq b$ and hence that $b \notin a_F$. However, $|| |a-b| < \frac{1}{n}|| = \{i \in \omega : i > n\}$ for any $n \in \mathbb{N}$, which means that $|| |a-b| < \frac{1}{n}|| \in F$, since $F$ is non-principal. Hence, by \cref{los}, $b \in (a)_F$.
\end{proof}

This lemma guarantees that we can meaningfully talk about infinitesimals on the $F$-hyperreal line. Of course, whether two elements $a, b \in \fhyl$ are infinitesimally close to one another may vary with $F$. Nonetheless, this still allows us to give a straightforward adaptation of the usual definition of continuity on the hyperreal line.

\begin{definition}[$F$-continuity]
A function $f : \fhyl \to \fhyl$ is $F$-continuous at a point $c \in \fhyl$ if for any $x \in (c)_F$, $f(x) \in (f(c))_F$.
\end{definition}

Just as in Luxemburg ultrapowers, this definition aims to capture in an intuitive way the idea that continuous functions are those functions $f$ for which a small change in the argument of $f$ will only entail a small change in the value of $f$. Since we want to use our notion of infinite closeness to characterize the idea of a ``small change'', we must relativize the notion of continuity on the $\fhyl$ to a viewpoint $F$. We nonetheless easily obtain the following theorem, which shows that our notion of $F$-continuity corresponds to the Weierstrass definition of standard analysis.

\begin{theorem} \label{cont}
A function $f : \RR \to \RR$ is continuous at a point $c$ if and only if $\st f$ is $F$-continuous at $c$ for any $F \in \frec$.
\end{theorem}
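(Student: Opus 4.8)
The plan is to unfold both sides through \cref{los} and reduce everything to routine $\varepsilon$--$\delta$ bookkeeping, with the Fréchet filter $F_0$ carrying the harder direction. Throughout I identify the real $c$ with the constant guise $\delta(c)$, so that $\st f(c) = \st f(\delta(c))$ is the constant guise $\delta(f(c))$; and I use that by \cref{los}, for $a,b \in \fhyl$ one has $b \in (a)_F$ iff $\{i \in \omega : |a(i) - b(i)| < \tfrac 1n\} \in F$ for every $n \in \mathbb{N}$, since $\llbracket |a-b| < \tfrac 1n \rrbracket = \{F : ||\,|a-b| < \tfrac 1n\,|| \in F\}$ and $||\,|a-b| < \tfrac 1n\,|| = \{i : |a(i)-b(i)| < \tfrac 1n\}$.

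For the left-to-right direction, assume $f$ is continuous at $c$, fix $F \in \frec$ and $x \in (c)_F$, and show $\st f(x) \in (f(c))_F$. Fix $n \in \mathbb{N}$: continuity of $f$ at $c$ gives $\varepsilon > 0$ with $|c - y| < \varepsilon \Rightarrow |f(c) - f(y)| < \tfrac 1n$, and by the Archimedean property I pick $m$ with $\tfrac 1m < \varepsilon$. Since $x \in (c)_F$, the set $\{i : |c - x(i)| < \tfrac 1m\}$ lies in $F$, and it is contained in $\{i : |f(c) - f(x(i))| < \tfrac 1n\}$ by the choice of $\varepsilon$ and $m$; as filters are closed under supersets, the latter set is in $F$, i.e. $F \fr |f(c) - \st f(x)| < \tfrac 1n$. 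As $n$ was arbitrary, $\st f(x) \in (f(c))_F$.

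For right-to-left I argue contrapositively. If $f$ is discontinuous at $c$, there is $\varepsilon > 0$ such that for every $i \in \omega$ some real $y_i$ satisfies $|c - y_i| < \tfrac{1}{i+1}$ yet $|f(c) - f(y_i)| \geq \varepsilon$; choosing such a sequence (the single appeal to Countable Choice, available in $ZF + DC$) defines $x \in \RR^\omega$ by $x(i) = y_i$. Then for each $n$ the set $\{i : |c - x(i)| < \tfrac 1n\}$ contains the cofinite set $\{i : i \geq n\}$, so $x \in (c)_{F_0}$; but for any $n$ with $\tfrac 1n \leq \varepsilon$ the set $\{i : |f(c) - f(x(i))| < \tfrac 1n\}$ is empty, hence not in $F_0$, so $F_0 \nfr |f(c) - \st f(x)| < \tfrac 1n$ and $\st f(x) \notin (f(c))_{F_0}$. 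Thus $\st f$ fails to be $F_0$-continuous at $c$, so it is not the case that $\st f$ is $F$-continuous at $c$ for all $F \in \frec$.

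The only real obstacle here is bookkeeping: making sure that the uniform passage from $n$ to $\varepsilon$ in the forward direction, and the ``diagonal'' sequence $x$ in the backward direction, interact correctly with the quantifier ``for all $n \in \mathbb{N}$'' built into the definition of the halo, and confirming that the pointwise reading of the term $|f(c) - \st f(x)| < \tfrac 1n$ matches $\{i : |f(c) - f(x(i))| < \tfrac 1n\}$ under \cref{los} (this is where the identification $\st f(\delta(c)) = \delta(f(c))$ is used). It is worth noting that the backward argument in fact establishes the stronger statement that $F_0$-continuity of $\st f$ at $c$ alone already implies continuity of $f$ at $c$; combined with the forward direction, which yields $F$-continuity for \emph{every} $F$, this gives the stated equivalence.
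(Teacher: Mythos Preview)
Your proof is correct. The forward direction matches the paper's almost exactly; you just add an explicit Archimedean step to pass from the real $\varepsilon$ furnished by continuity to some $\tfrac{1}{m}$, which the paper absorbs into the bare remark that $F \fr |c-x| < \delta$.

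The backward direction is where you and the paper genuinely diverge. The paper argues via the Transfer Principle: assuming $F$-continuity, it picks a positive infinitesimal $\delta \in (0)_F$, argues that $F \fr \forall x\,(|c-x|<\delta \to |f(c)-f(x)|<\epsilon)$ for each real $\epsilon > 0$, concludes $F \fr \exists \delta > 0\,\forall x(\ldots)$, and then transfers this sentence---whose only parameters are the standard $c$ and $\epsilon$---back down to $\RR$ via \cref{trans}. You instead run the contrapositive at the level of sequences: from discontinuity you extract (via one use of Countable Choice) a sequence $(y_i)$ with $|c-y_i| < \tfrac{1}{i+1}$ and $|f(c)-f(y_i)| \ge \varepsilon$, assemble it into a guise $x$, and verify directly that $x \in (c)_{F_0}$ while $\st f(x) \notin (f(c))_{F_0}$ because the relevant index set is empty. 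Your route is more elementary---it never invokes Transfer and stays entirely at the level of cofinite sets and filter membership---and, as you note, it makes transparent that $F_0$-continuity alone already forces standard continuity (indeed your witness $x$ refutes $F$-continuity for \emph{every} $F \in \frec$, since the empty set lies in no proper filter). The paper's route, by contrast, is deliberately written in the idiom of Robinsonian nonstandard analysis, exhibiting Transfer as a working tool; that is thematically the point of the surrounding section.
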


\begin{proof}
Assume first that $f : \RR \to \RR$ is continuous, and let $F \in \frec$, $c \in \RR$, and $x \in (c)_F$. Since $f$ is continuous at $c$, for any $n \in \omega$, there is a positive $\delta \in \RR$ such that for any $a \in \RR$, $|c - a| < \delta$ implies $ |f(c) - f(a)| < \frac{1}{n}$. Fix $n \in \omega$ and choose such a $\delta \in \RR$. Note that $F \fr |c - x| < \delta $, and thus $|| |c-x| < \delta || \in F$. By choice of $\delta$, for any $i \in || |c-x| < \delta||$, we have that $\RR \models | f(c(i)) - f(x(i)) | < \frac{1}{n}$. Thus $|| | f(c) - f(x)| < \frac{1}{n} || \in F$, and $F \fr | f(c) - f(x)| < \frac{1}{n}$. Since $n$ was chosen arbitrarily, it follows that $f(x) \in (f(c))_F$.

Conversely, suppose that there is some $F \in \frec$ such that $x \in (c)_F$ implies $f(x) \in (f(c))_F$. Note that this means that for any positive $\epsilon \in \RR$, and any positive $\delta \in (0)_F$, $F \fr \forall x (|c - x| < \delta \implies |f(c) - f(x)| < \epsilon)$. Since $(0)_F$ is non-empty, we have that $F \fr \exists \delta (\delta > 0 \me \forall x (|c-x| < \delta \implies |f(c) - f(x)| < \epsilon))$. By \cref{trans}, since this first-order sentence only has a real parameter $\epsilon$, it is also true in $\RR$. But then there must be a positive $\delta \in \RR$ such that for any $a \in \RR$, $|c - a| < \delta$ implies $|f(c) - f(a)| < \epsilon$, thus establishing that $f$ is continuous at $c$. 
\end{proof}

This establishes that one of the most appealing features of classical nonstandard analysis, an intuitive characterization of continuity that uses infinitesimals in a coherent and powerful way, is still available in our semi-constructive setting. In fact, as I will argue in \cref{canon}, there is a sense in which our non-constructive definition constitutes a better alternative to the Weierstrassian definition of continuity than the classical nonstandard one. \\

How much infinitesimal calculus could $\fhyl$ provide a foundation for? In the \textit{Epilogue} of his textbook introducing calculus via the infinitesimal method \cite{keisler2013elementary}, Keisler writes that all the results obtained in the textbook can be derived from the axioms of a complete ordered field for $\mathbb{R}$ and two axioms for the hyperreal line $\mathbb{B}^*$, the \textit{Extension axiom} and the \textit{Transfer axiom}. From the results obtained so far, it is easy to verify that the Extension axiom is valid on $\fhyl$, while the Transfer axiom is a weaker version of \cref{trans}. This means that $\fhyl$ could be used as an alternative foundation for Keisler's textbook. Because it is a possibility structure, however, some caution is in order, and some arguments need to be slightly altered. For instance, Keisler proves the following from his axiom system for $\mathbb{R}$ and $\mathbb{R}^*$:
\vspace{.5em}
\begin{description}
\item[Standard Part Principle] For any \textit{finite} hyperreal number $b$ (i.e., any $b \in \mathbb{R}^*$ such that $|b| < r$ for some $r \in \mathbb{R}$), there is exactly one real number $r$ infinitely close to $b$.
\end{description}
\vspace{.5em}
\noindent Of course, this fundamental result of nonstandard analysis is what allows for the definition of the \textit{standard part} function on the finite hyperreals, a key tool in deriving results of classical analysis. It is not true, however, that for any $F \in \frec$ and any $a \in \fhyl$ such that $F \fr |a| < \delta(r)$ for some real number $r$, we have $a \in (\delta(r))_F$ for some unique real $r$. For example, letting $a \in \RR^\omega$ be given by $a(i) = \frac{(-1)^i}{2}$, we have that $F_0 \fr -1 < a < 1$, yet clearly $a \notin (\delta(r))_{F_0}$ for any real number $r$. Moreover, there does not seem to be a principled way of determining what the standard part of $a$ should be at $F_0$, since, letting $G_1$ and $G_2$ be the filters in $\frec$ generated by adding the set of even and odd natural numbers to $F_0$, respectively, one would clearly want the $G_1$-standard part of $a$ to be $\frac{-1}{2}$ and the $G_2$-standard part of $a$ to be $\frac{1}{2}$. 

The way out of this conundrum is to ``internalize'' the Standard Part Principle, i.e., to turn it into a first-order statement in an extended language and show that it is valid on $\fhyl$. The argument, which becomes a straightforward adaptation of Keisler's, can be found in \cref{app}. The need for such a detour is essentially due to the fact that the standard part function, just like the standardness predicate, are \textit{external} objects in nonstandard analysis. Once the Standard Part Principle has been internalized, however, $\fhyl$ can play the same foundational role for Keisler's infinitesimal calculus as Luxemburg ultrapowers. Moreover, as we will see in the next section, a straightforward theory of \textit{internal} objects can also be developed for $\fhyl$.

\subsection{Internal Sets and Countable Saturation}

Internal sets play a central role in classical nonstandard analysis, as they form a collection of ``well-behaved'' subsets of a nonstandard extension of the real line. As we shall see, we can also define in a natural way the notion of an internal subset of $\fhyl$, although this must first be relativized to a point $F \in \frec$.

\begin{definition}[$F$-internal set]
Let $\{A_n\}_{n \in \omega}$ be a family of subsets of $\RR$, and let $A_F = \{a : \{i: \RR \models A_i(a(i))\} \in F\}$. An \textit{$F$-internal set} is a subset of $D$ such that $D = A_F$ for some family $\{A_n\}_{n \in \omega}$ of subsets of $\RR$.
\end{definition}

This definition mirrors the usual definition of an internal set in an ultrapower of the reals, as it can be found for example in \cite[Chap.~11]{goldblatt2012lectures}. Since $\fhyl$ is a possibility structure, however, each filter in $\frec$ gives us a different viewpoint on what an internal set determined by a countable sequence $\{A_n\}_{n \in \omega}$ looks like. To keep track of how $F$-internal sets relate to one another across different filters, it is useful to expand the language $\la$, adding a predicate symbol (which I will call an \textit{internal predicate}) $A$ for each sequence $\{A_n\}_{n \in \omega}$ of subsets of $\RR$. Let $\la^+$ be the new language. $\fhyl$ can be seen as an $\la^+$-structure by letting $a \in \itp(F,A)$ iff $||A(a)|| = \{i \in \omega : \RR \models A_i(i)\} \in F$ for any internal predicate $A$, any $a \in \fhyl$ and any $F \in \frec$. The following lemma is straightforward and ensures that this interpretation satisfies the conditions of \cref{def1}.

\begin{lemma}
Let $\{A_n\}_{n \in \omega}$ be a sequence of subsets of $\RR$ with corresponding internal predicate $A$, and $a \in \fhyl$. Then for any $F, G \in \frec:$
\begin{description}
\item[Persistence] if $G \rset F$ and $a \in \itp(F,A)$, then $a \in \itp(G,A)$;
\item[Refinement] if $a \notin \itp(F,A)$, then there is $G \rset F$ such that for all $H \rset G$, $a \notin \itp(H,A)$;
\item[Equality as congruence] if $a \in \itp(F,A)$ and $(a,b) \in \itp(F,=)$ for some $b \in \fhyl$, then $b \in \itp(F,A)$.
\end{description}

\end{lemma}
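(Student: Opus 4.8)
The plan is to verify the three conditions directly from the definition of the interpretation $\itp(F,A)$, exactly mirroring the proof of \cref{lma1}, since the internal predicate $A$ for a sequence $\{A_n\}_{n\in\omega}$ behaves just like a relation symbol whose ``truth set'' at a tuple is the set $||A(a)|| = \{i \in \omega : \RR \models A_i(a(i))\}$. The key observation is that everything reduces to manipulating this subset of $\omega$ together with the fact that the filters in $\frec$ are closed under supersets and finite intersections.

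First I would treat \textbf{Persistence}: if $G \rset F$ and $a \in \itp(F,A)$, then $||A(a)|| \in F \sset G$, so $||A(a)|| \in G$, i.e.\ $a \in \itp(G,A)$. Next, for \textbf{Refinement}, suppose $a \notin \itp(F,A)$, so $||A(a)|| \notin F$. By the same basic fact about filters used in \cref{lma1}, the set $F \cup (\omega \setminus ||A(a)||)$ generates a non-principal filter $G \rset F$ (non-principality is preserved because $F$ is non-principal and adding the complement of a non-$F$-set cannot create a finite member); then $\omega \setminus ||A(a)|| \in G$, hence $||A(a)|| \notin H$ for every $H \rset G$, so $a \notin \itp(H,A)$ for all such $H$. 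Finally, for \textbf{Equality as congruence}, suppose $a \in \itp(F,A)$ and $(a,b) \in \itp(F,=)$, i.e.\ $||a=b|| \in F$. Since $||a=b|| \cap ||A(a)|| \sset ||A(b)||$ — because at any index $i$ where $a(i)=b(i)$ and $\RR \models A_i(a(i))$ we also have $\RR \models A_i(b(i))$ — and $F$ is closed under intersection and superset, we get $||A(b)|| \in F$, i.e.\ $b \in \itp(F,A)$.

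The argument involves no real obstacle; the only point requiring a moment's care is the non-principality of the extending filter $G$ in the Refinement clause, which is exactly the point already invoked (without further comment) in the corresponding clause of \cref{lma1}, so it is legitimate to cite it there. One should also note that only one half of the full congruence condition of \cref{def1} is needed here, since $A$ is a predicate symbol and there are no new function symbols to check; the relation-symbol half is precisely what is being verified.
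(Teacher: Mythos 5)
Your proof is correct and follows essentially the same route as the paper's: reduce everything to the set $||A(a)||$, use closure of filters under supersets for Persistence, extend $F$ by $\omega \setminus ||A(a)||$ for Refinement, and use $||a=b|| \cap ||A(a)|| \sset ||A(b)||$ for congruence. Your extra remark on non-principality of the extending filter is a point the paper itself passes over silently (it relies on members of $\frec$ extending the Fréchet filter), so no discrepancy there.
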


\begin{proof} Fix a countable sequence $\{A_n\}_{n \in \omega}$ of subsets of $\RR$ with associated internal predicate $A$, $a,b \in \fhyl$ and $F \in \frec$.
\begin{description}

\item[Persistence] Note that $a \in \itp(F,A)$ iff $||A(a)|| \in F \sset G$, so $a \in \itp(G,A)$.
\item[Refinement] If $a \notin \itp(F,A)$, then there is $G \in \frec$ such that $G \rset F \cup (\omega \setminus ||A(a)||)$. Clearly, for any $H \rset G$, $||A(a)|| \notin H$, from which it follows that $a \notin \itp(H,A)$.
\item[Equality as congruence] Note that $||a = b|| \cap ||A(a)|| = \{i \in \omega \RR \models a(i) = b(i) \me A_i(a(i)\} \sset ||A(b)||$. Hence $(a,b) \in \itp(F,=)$ and $a \in \itp(F,A)$ together imply that $b \in \itp(F,A)$. \qedhere
\end{description}
\end{proof}

More generally, for any $\la^+$-formula $\phi(A^1,...,A^K,\overline{x})$ with $n$ free variables where $A^1,...,A^k$ are the internal predicates appearing in $\phi$, and the corresponding sequences of subsets of $\RR$ are $\{A^1_i\sq,...,\{A^k_i\sq$, let $||\phi(A^1,...,A^k,\overline{a})|| = \{i \in \omega: \RR \models \phi(A^1_i,...,A^k_i,\overline{a(i)})\}$ for any $n$-tuple $\overline{a} \in \fhyl$. Note that this is well defined since $\phi(A^1_i,...A^k_i,\overline{x})$ is an $\la$-formula for any $i \in \omega$. By adapting the proof of \cref{los} in a straightforward way, we also obtain the following corollary.

\begin{corollary} \label{losx}
For any $\la^+$ formula $\phi(\overline{x})$ with $n$ free variables and any $n$-tuple $\overline{a}$ of elements of $\fhyl$, we have that $F \fr \phi(\overline{a})$ iff $||\phi(\overline{a})|| \in F$ for any $F \in \frec$.
\end{corollary}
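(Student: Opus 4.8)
The plan is to run exactly the induction on the complexity of formulas that proves \cref{los}, now carried out over $\la^+$-formulas rather than $\la$-formulas, checking that the single new ingredient — atomic formulas whose leading symbol is an internal predicate — behaves as required, and observing that every other step of that proof is completely insensitive to which atomic predicates the language happens to contain. Note first that, as already remarked just before the corollary, $||\phi(\overline{a})||$ is well-defined for an $\la^+$-formula $\phi(A^1,\dots,A^k,\overline{x})$: replacing each internal predicate $A^j$ occurring in $\phi$ by its $i$-th component $A^j_i$ yields a genuine $\la$-formula, so $||\phi(\overline{a})|| = \{i \in \omega : \RR \models \phi(A^1_i,\dots,A^k_i,\overline{a(i)})\}$ is a well-defined subset of $\omega$, just as in \cref{not2}.

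For the base case there are two sub-cases. Atomic formulas whose leading symbol is a relation symbol of $\la$ are handled verbatim as in \cref{los}: they unwind to the definition of $\itp$ together with the coordinatewise definition of term interpretation in $\fhyl$. For an atomic formula of the form $A(a)$, where $A$ is the internal predicate associated with a sequence $\{A_n\}_{n\in\omega}$ and $a \in \fhyl$, one reads off directly from the definition of $\fhyl$ as an $\la^+$-structure that $a \in \itp(F,A)$ iff $||A(a)|| = \{i \in \omega : \RR \models A_i(a(i))\} \in F$, i.e. $F \fr A(a)$ iff $||A(a)|| \in F$; when a nontrivial $\la$-term is applied to an internal predicate the reduction is the same, using that term interpretation in $\fhyl$ acts coordinatewise.

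For the inductive step I would treat $\neg$, $\me$ and $\exists$ explicitly (the remaining connectives and $\forall$ being definable from these, exactly as in \cref{los}), and here nothing changes at all from the proof of \cref{los}: each of those arguments uses only the pointwise Boolean-algebra identities $\omega \setminus ||\psi(\overline{a})|| = ||\neg\psi(\overline{a})||$, $||\psi_1(\overline{a})|| \cap ||\psi_2(\overline{a})|| = ||\psi_1 \me \psi_2(\overline{a})||$ and $||\psi(b,\overline{a})|| \sset ||\exists x\, \psi(x,\overline{a})||$ — all valid because classical satisfaction in $\RR$ is two-valued — together with the filter axioms and the closure of $\frec$ under supersets. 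In particular, the right-to-left direction of the existential case again invokes the Axiom of Countable Choices to select a witnessing function $b \in \fhyl$, and, as in \cref{los}, this remains the only appeal to any fragment of the Axiom of Choice.

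I do not anticipate any real obstacle: the content of the corollary is precisely the general principle recorded among the remarks following \cref{los}, namely that the argument there is indifferent to expansions of $\la$ provided the new symbols are interpreted by the stated ``membership-in-$F$'' condition — which is exactly how the internal predicates were set up in the preceding lemma. The only point requiring a moment's care is the bookkeeping in the atomic internal-predicate case when several internal predicates occur inside one compound formula, but this is already absorbed into the definition of $||\phi(A^1,\dots,A^k,\overline{a})||$ recalled above.
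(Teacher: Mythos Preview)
Your proposal is correct and matches the paper's approach exactly: the paper does not spell out a proof but simply says the result follows ``by adapting the proof of \cref{los} in a straightforward way,'' and your write-up is precisely that adaptation, with the atomic case for internal predicates supplied by definition and the inductive steps carried over verbatim. You even point to the relevant item in \cref{rmk1}, which is where the paper records that the argument of \cref{los} is insensitive to expansions of $\la$ interpreted via the membership-in-$F$ condition.
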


This result allows us to describe the $F$-internal subsets of $\fhyl$ ``internally'', that is, using the satisfaction relation at $F$. In particular, we can define the following:

\begin{theorem}[F-Internal Set Definition Principle] \label{indef}
Let $\phi(\overline{A},\overline{x},y)$ be an $\la^+$ formula with $n+1$ variables and where $\overline{A}$ is a tuple of internal predicates. Then for any $n$-tuple $\overline{a} \in \fhyl$ and any $F \in \frec$, $\{b \in \fhyl : F \fr \phi(\overline{A},\overline{x},b)\}$ is an $F$-internal subset of $\fhyl$.  
\end{theorem}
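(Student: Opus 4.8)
The plan is to reduce the F-Internal Set Definition Principle to an instance of an $F$-internal set in the original sense, by showing that the defined set $\{b \in \fhyl : F \fr \phi(\overline{A},\overline{a},b)\}$ coincides with $B_F$ for a suitable countable sequence $\{B_n\}_{n \in \omega}$ of subsets of $\RR$. The key observation is that $\phi(\overline{A},\overline{a},y)$, while an $\la^+$-formula, becomes, at each index $i \in \omega$, an honest $\la$-formula $\phi(A^1_i,\dots,A^k_i,\overline{a(i)},y)$ with a single free variable $y$ ranging over $\RR$; since $\la$ contains a relation symbol for every finitary relation on $\RR$, the set $B_i = \{r \in \RR : \RR \models \phi(A^1_i,\dots,A^k_i,\overline{a(i)},r)\}$ is a subset of $\RR$, and hence $\{B_n\}_{n \in \omega}$ is a legitimate countable sequence of subsets of $\RR$ with an associated internal predicate.

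The core of the argument is then a chain of equivalences. First I would fix $b \in \fhyl$ and note that, by \cref{losx} applied to the $\la^+$-formula $\phi(\overline{A},\overline{a},y)$ with the witness $b$ substituted for $y$, we have $F \fr \phi(\overline{A},\overline{a},b)$ iff $||\phi(\overline{A},\overline{a},b)|| \in F$. Next, unpacking the definition of $||\cdot||$ for $\la^+$-formulas given just before \cref{losx}, $||\phi(\overline{A},\overline{a},b)|| = \{i \in \omega : \RR \models \phi(A^1_i,\dots,A^k_i,\overline{a(i)},b(i))\}$, which by the very definition of $B_i$ equals $\{i \in \omega : b(i) \in B_i\} = \{i \in \omega : \RR \models B_i(b(i))\}$. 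Hence $F \fr \phi(\overline{A},\overline{a},b)$ iff $\{i : \RR \models B_i(b(i))\} \in F$, which is exactly the condition $b \in B_F$ from the definition of an $F$-internal set. Therefore $\{b \in \fhyl : F \fr \phi(\overline{A},\overline{a},b)\} = B_F$, and this set is $F$-internal by definition.

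I expect no genuine obstacle here: the proof is essentially bookkeeping, and the only points requiring care are (i) checking that the substitution of the guise $b$ for the variable $y$ interacts correctly with the index-wise evaluation — i.e., that evaluating $\phi(\overline{A},\overline{a},y)$ at $b$ and then restricting to index $i$ gives the same thing as first restricting to index $i$ and then evaluating at $b(i)$, which is immediate from the inductive clauses for $||\cdot||$ and the pointwise definition of function interpretations in $\fhyl$; and (ii) making sure that the parameters $\overline{a} \in \fhyl$ (which are themselves functions $\omega \to \RR$, not reals) are handled correctly — but this is fine, since at each index $i$ the tuple $\overline{a(i)}$ consists of genuine reals, so $\phi(A^1_i,\dots,A^k_i,\overline{a(i)},y)$ is a well-formed $\la$-formula in the single free variable $y$. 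A remark worth appending is that, by \cref{rmk1}, the same argument shows every $F$-internal set arises this way, so the $F$-internal sets of $\fhyl$ are exactly the sets first-order definable (with internal-predicate and hyperreal parameters) over the viewpoint $F$.
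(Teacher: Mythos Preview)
Your proof is correct and in fact more streamlined than the paper's. The paper proceeds by explicit induction on the complexity of $\phi$, constructing the defining sequence $\{S^\phi_i\}_{i\in\omega}$ case by case (complements for negation, intersections for conjunction, unions for the existential quantifier, the last case invoking Countable Choice). You instead write down the sequence $B_i = \{r \in \RR : \RR \models \phi(A^1_i,\dots,A^k_i,\overline{a(i)},r)\}$ directly and appeal to \cref{losx} once; since the inductive work establishing \cref{losx} has already been done, this is the shorter route, and it yields exactly the same sequence the paper builds by hand. The only thing you might add, if you wish to match the paper's level of detail, is that your sequence $\{B_i\}_{i\in\omega}$ does not depend on $F$, so the same internal predicate witnesses $F$-internality uniformly across all viewpoints---but this is immediate from your definition of $B_i$.
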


\begin{proof}
By induction on the complexity of $\phi$. Note that this is trivial if $\phi(x) = A(x)$ for some internal predicate $A$.
\begin{itemize}
    \item If $R$ is an $n+1$-ary $\la$ relation symbol, then let $\{R_i\sq$ be such that $R_i = \{b \in \RR : R(\overline{a}(i),b)\}$, and let $\st R^{\overline{a}}$ be the corresponding internal predicate in $\la^+$. Then clearly for any $F \in \frec$ $F \fr \st R(\overline{a},b)$ iff $||R(\overline{a},b)|| \in F$ iff $||\st R^{\overline{a}}(b)|| \in F$ iff $b \in \st R^{\overline{a}}_F$. 
    \item If $\phi := \neg \psi (\overline{A},\overline{x},y)$, then by induction hypothesis we have a countable sequence $\{S^\psi_i\sq$ such that $\{b \in \fhyl : F \fr \psi(\overline{A},\overline{a},b)\} = S^\psi_F$ for any $F \in \frec$. Define $\{S^\phi_i\sq$ by letting $S^\phi_i = \RR \setminus S^\psi_i$ for any $i \in \omega$. Then, for any $b \in \fhyl$, we have that $||S^\phi(b)|| = \omega \setminus ||S^\psi(b)||$, from which it follows that $\{b \in \fhyl : F \fr \phi(\overline{A},\overline{a},b)\} = S^\phi_F$.
    \item If $\phi := \psi \me \chi(\overline{A},\overline{x},y)$, then by induction hypothesis we have countable sequences $\{S^\psi_i\sq$ and $\{S^\chi_i\sq$ such that $\{b \in \fhyl : F \fr \psi(\overline{A},\overline{a},b)\} = S^\psi_F$ and $\{b \in \fhyl : F \fr \chi(\overline{A},\overline{a},b)\} = S^\chi_F$ for any $F \in \frec$. Define $\{S^\phi_i\sq$ by letting $S^\phi_i = S^\psi_i \cap S^\chi_i$ for any $i \in \omega$, and note that $||S^\phi(b)|| = ||S^\psi(b)|| \cap ||S^\chi(b)||$ for any $b\in \fhyl$, thus showing that $\{b \in \fhyl : F \fr \phi(\overline{A},\overline{a},b)\} = S^\phi_F$.
    \item If $\phi := \exists z \psi(\overline{A},\overline{x},y,z)$, then by induction hypothesis we have countable sequences $\{S^{\psi(c)}_i\sq$ such that $\{b \in \fhyl : F \fr \psi(\overline{A},\overline{a},b,c)\} = S^{\psi(c)}_F$ for any $c \in \fhyl$. For any $i \in \omega$, let $S^\phi_i = \bigcup_{c \in \fhyl} S^{\psi(c)}_i$. I claim that $|| \exists z \psi(\overline{A},\overline{a},b,z)|| = ||S^\phi(b)||$ for any $b \in \fhyl$, which is enough to show that $\{b \in \fhyl : F \fr \phi(b)\} = S^\phi_F$. For the proof of the claim, note first that, using the Axiom of Countable Choices, we can define $c \in R^\omega$ such that $||\exists z \psi(\overline{A},\overline{a},b,z|| \sset ||\psi(\overline{A},\overline{a},b,c|| = ||S^{\psi(c)}(b)|| \sset ||S^\phi(b)||$. Finally, the converse direction is straightforward: if $i \in ||S^\phi(b)||$, then $\RR \models \psi(\overline{A_i},\overline{a(i)},b(i),c(i))$ for some $c \in \fhyl$, which implies that $\RR \models \exists z \phi(\overline{A_i},\overline{a(i)},b(i),z)$, and hence $i \in ||\exists x \phi(\overline{A},\overline{a},b,c)||$. \qedhere
\end{itemize}
\end{proof}

As an immediate corollary of \cref{indef}, we obtain the following result on the structure of $F$-internal sets.

\begin{corollary}
Let $F \in \frec$. Every nonstandard extension of a subset of the reals is an $F$-internal set. Moreover, the $F$-internal sets form a Boolean algebra.
\end{corollary}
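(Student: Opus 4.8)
The corollary has two parts, and I would treat them separately.

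The first part is immediate from \cref{indef}. A subset $A \subseteq \mathbb{R}$ is a unary relation symbol of $\la$, so its $F$-nonstandard extension is $\st A_F = \itp(F,A) = \{b \in \fhyl : F \fr A(b)\}$ by the atomic clause of the forcing relation; applying \cref{indef} to the $\la^+$-formula $\phi(y) := A(y)$ shows that this set is $F$-internal. (Concretely, $\st A_F = A_F$ for the constant family $A_i := A$.)

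For the second part, the plan is to read the Boolean operations off \cref{indef} and \cref{losx} together. By \cref{losx}, an $F$-internal set with internal predicate $A$ equals $A_F = \{b : ||A(b)|| \in F\} = \{b : F \fr A(b)\}$, so, together with \cref{indef}, the $F$-internal subsets of $\fhyl$ are exactly the sets $\{b : F \fr \phi(\overline{a},b)\}$ for $\la^+$-formulas $\phi$ and parameters $\overline{a} \in \fhyl$. I would then exhibit the structure directly: $\fhyl = \{b : F \fr b = b\}$ and $\emptyset = \{b : F \fr \neg(b=b)\}$ are $F$-internal, giving the top and bottom elements; if $D_1 = \{b : F \fr \phi(b)\}$ and $D_2 = \{b : F \fr \psi(b)\}$, then $D_1 \cap D_2 = \{b : F \fr \phi(b) \me \psi(b)\}$ is $F$-internal, so ordinary intersection serves as the meet; and $\{b : F \fr \neg \phi(b)\}$ and $\{b : F \fr \phi(b) \jo \psi(b)\}$ are $F$-internal by \cref{indef}, furnishing a complement of $D_1$ and a join of $D_1$ and $D_2$. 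The Boolean-algebra laws then reduce, via \cref{losx} (which turns ``$F \fr \chi(b)$'' into ``$||\chi(b)|| \in F$''), to the observation that each identity ($\phi \me \neg\phi \leftrightarrow \bot$, $\phi \jo \neg\phi \leftrightarrow \top$, distributivity, absorption, etc.) becomes an equality of subsets of $\omega$ that holds because $||\cdot||$ commutes with the genuine Boolean operations on $\Po(\omega)$ and $F$ is a filter.

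The step I expect to require real care is well-definedness. The complement and join above were specified through a choice of defining formula, equivalently through a choice of code $\{A_i\}$ for a given $F$-internal subset of $\fhyl$, and for a mere filter (as opposed to an ultrafilter) two codes of the same $F$-internal set need not have pointwise complements that again represent the same set. The natural way to organize this is to do the bookkeeping one level up — on codes modulo agreement on an $F$-large set, i.e. in the reduced power of the Boolean algebra $\Po(\mathbb{R})$ by $F$, which is a Boolean algebra for trivial reasons — and to present the ``Boolean algebra of $F$-internal sets'' via the map $\{A_i\} \mapsto A_F$, whose meet is honest intersection (because $F$ is a filter) while complement and join are carried by the codes. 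Once this reading is fixed, everything else is the mechanical verification sketched above, and the first half of the corollary is just the remark that nonstandard extensions are coded by constant families.
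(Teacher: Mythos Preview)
Your approach is essentially the paper's: both parts are read off \cref{indef}, with \cref{losx} used to check the Boolean laws. The paper defines $\neg_F A_F$, $A_F \me_F B_F$, $A_F \jo_F B_F$ exactly as you do, via the forcing of $\neg A(b)$, $A(b)\me B(b)$, $A(b)\jo B(b)$, and then declares the verification ``straightforward''.

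Where you go beyond the paper is in flagging the well-definedness of complement and join on \emph{extensions}, and you are right to do so. The paper simply writes the operations as functions of the set $A_F$, but they genuinely depend on the code: at $F_0$, the sequences $A_i=\{0\}$ for $i$ even, $A_i=\emptyset$ for $i$ odd, and $B_i=\emptyset$ for all $i$ both have empty $F_0$-extension, yet $\neg_{F_0}B_{F_0}=\RR^\omega$ while $\neg_{F_0}A_{F_0}$ omits the constant-zero sequence. So your proposal to carry the algebra on codes (equivalently, in the reduced power of $\Po(\mathbb{R})$ modulo $F$) and present the $F$-internal sets via $\{A_i\}\mapsto A_F$ is the honest way to make the paper's sentence true; the paper's proof is silent on this point.
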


\begin{proof}
Recall that if $S \sset \mathbb{R}$, then $\st S_F = \{b \in \fhyl : F \fr S(b)\}$. By \cref{indef}, it follows at once that $\st S_F$ is $F$-internal for any $F \in \frec$. Moreover, for any internal predicates $A, B$, let $\neg_F A_F = \{b \in \fhyl : F \fr \neg A(b)\}$, $A_F \me_F B_F = \{ b \in \fhyl : F \fr A(b) \me B(b)\}$, and $A_F\jo_F B_F = \{b \in \fhyl : F \fr A(b) \jo B(b)\}$. By \cref{indef}, the operations $\neg_F$, $\me_F$ and $\jo_F$ map $F$-internal sets to $F$-internal sets, and, using \cref{losx}, it is straightforward to verify that $\mathfrak{I}_F = (\{A_F\}_{A \in \la^+ \setminus \la}, \me_F, \jo_F, \neg_F, \fhyl, \emptyset)$ is a Boolean algebra.
\end{proof}

The Definition Principle above therefore gives us a powerful way of defining $F$-internal sets and ensures that many subsets of $\fhyl$ are $F$-internal for any $F$. Note however that, because we are working in a possibility structure, the extension of an internal subset of $\fhyl$ must always be relativized to a filter $F$. In other words, the Definition Principle ensures that \textit{the same} internal subsets, understood \textit{intensionally}, exist at any point $F \in \frec$. But whether an element in $\fhyl$ belongs to a given internal set $S$ will vary with the filter $F$, i.e., the \textit{extension} of internal sets is relative to the viewpoints in $\frec$. To illustrate this point, consider the internal predicate $\st N$, corresponding to the sequence $\{N_i\sq$ where $N_i$ is the set of all natural numbers for any $i \in \omega$. In other words, $\st N$ represents the nonstandard extension of the natural numbers, i.e., the internal set of hypernatural numbers. For any $F \in \frec$, the hypernatural-numbers-at-$F$ are an $F$-internal set, namely $\st N_F$. But this does not mean that the set of hypernatural numbers is stable across points in $\frec$. As a matter of fact, if we consider the sequence $a \in \fhyl$ defined by $a(2k) = k$ and $a(2k+1) = \pi$ for all $k \in \omega$, then $a \in \st N_F$ precisely when $\{2k : k \in \omega\} \in F$. From now on, I will adopt the ``intensional'' perspective on internal sets whenever possible, meaning that I will use the phrase ``internal set'' to designate the varying extension of an internal predicate $S$.\\

In Luxemburg ultrapowers, a crucial feature of internal sets is their countable saturation property: any countable decreasing sequence of non-empty internal sets has a non-empty intersection. As we shall see, the same property, once relativized in a natural way to viewpoints in $\frec$, can also be proved for the internal sets on $\fhyl$.

\begin{notation}
Let $A$ and $B$ be internal predicates corresponding to countable sequences $\{A_i\sq$ and $\{B_i\sq$ respectively. Then $||A \sset B||$ is defined as the set $\{i \in \omega : A_i \sset B_i\}$, and $||A \neq \emptyset||$ as the set $\{i \in \omega : A_i \neq \emptyset\}$.
\end{notation}

By \cref{losx}, it is straightforward to verify that for any $F \in \frec$, and any internal predicates $A$ and $B$, $||A \sset B|| \in F$ iff $F \fr \forall x (A(x) \to B(x))$, and $||A \neq \emptyset || \in F$ iff $F \fr \exists x \in A$. We are now in a position to prove the second important feature of $\fhyl$:

\begin{theorem}[Countable F-saturation Principle] \label{sat}
Let $F \in \frec$ and let $\{X^i\sq$ be a family of $F$-internal sets such that for any $k \in \omega$, $F \Vdash \exists x X^k(x)$ and $F \Vdash \forall x(X^{k+1}(x) \to X^k(x))$. Then there is $a \in \fhyl$ such that $F \Vdash  X^k(a)$ for all $k \in \omega$.
\end{theorem}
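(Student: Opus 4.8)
The plan is to adapt the classical diagonal proof of countable saturation for ultrapowers, using the filter $F$ in place of an ultrafilter. Since each $X^k$ is $F$-internal, I would fix a defining sequence $\{X^k_i\}_{i \in \omega}$ of subsets of $\RR$ with $X^k = X^k_F$; by \cref{losx} the hypotheses then read $||X^k \neq \emptyset|| \in F$ and $||X^{k+1} \sset X^k|| \in F$ for every $k \in \omega$. The first step is to record, for each $k$, the ``good index set''
\[
B_k \;=\; \bigcap_{j \leq k} ||X^j \neq \emptyset|| \;\cap\; \bigcap_{j < k} ||X^{j+1} \sset X^j||.
\]
Being a finite intersection of members of $F$, each $B_k$ lies in $F$, and $B_0 \rset B_1 \rset \cdots$. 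The point of $B_k$ is that for $i \in B_k$ the sets $X^0_i, \dots, X^k_i$ are all nonempty and satisfy $X^k_i \sset \cdots \sset X^0_i$.

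Next I would build $a$ by diagonalization. For $i \in B_0$ let $h(i) = \max\{k \leq i : i \in B_k\}$, a maximum over a nonempty finite set, so that $i \in B_{h(i)}$ and in particular $X^{h(i)}_i \neq \emptyset$. Using the Axiom of Countable Choices, choose $a(i) \in X^{h(i)}_i$ for every $i \in B_0$ and set $a(i) = 0$ for $i \notin B_0$; this produces an element $a \in \fhyl$. This is the only appeal to choice in the argument, and the Axiom of Countable Choices is a theorem of $ZF+DC$.

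Finally I would verify $F \fr X^k(a)$ for each fixed $k$, which by \cref{losx} means $||X^k(a)|| \in F$. Here non-principality of $F$ is used: since $F$ contains every cofinite set, $C_k := B_k \cap \{i \in \omega : i \geq k\} \in F$. For $i \in C_k$ one has $h(i) \geq k$, because $k$ itself witnesses membership in the set defining $h(i)$; and $i \in B_{h(i)}$ supplies $X^{h(i)}_i \sset X^{h(i)-1}_i \sset \cdots \sset X^k_i$, whence $a(i) \in X^{h(i)}_i$ gives $a(i) \in X^k_i$, i.e.\ $i \in ||X^k(a)||$. Thus $C_k \sset ||X^k(a)||$, and since filters are upward closed, $||X^k(a)|| \in F$, as required.

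The step I expect to be the only real obstacle — everything else being bookkeeping — is dealing with the fact that $F$ is merely a filter: in the ultrapower proof one first replaces each defining sequence by one whose terms are genuinely nonempty and genuinely descending, altering it on a set outside the ultrafilter, and that move is unavailable here. The remedy above is to diagonalize directly against the $F$-large witness sets $B_k$, invoking non-principality exactly once in order to absorb the finite initial segment $\{0, \dots, k-1\}$. The whole construction stays within $ZF+DC$.
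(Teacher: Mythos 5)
Your proof is correct and follows essentially the same route as the paper's: your sets $B_k$ and the index function $h(i)$ are the paper's $J^k$ and $k_n$, the diagonal choice of $a(i)$ via countable choice is the same construction, and the final verification likewise uses non-principality of $F$ to absorb the initial segment $\{0,\dots,k-1\}$. No gaps.
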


\begin{proof} The proof mirrors the usual proof for Luxemburg ultrapowers as it can be found for example in \cite[Theorem 11.10.1]{goldblatt2012lectures}. For any $k \in \omega$, let $\{A^k_n\}_{n \in \omega}$ be the countable sequence of subsets of $\mathbb{R}$ such that $X^k = A^k_F$. Since $F \Vdash \exists x X^k(x)$, this means that $||A^k \neq \emptyset|| \in F$ for each $k \in \omega$. Similarly, since $F \Vdash \forall x (X^{k+1}(x) \to X^k(x))$ for any $k \in \omega$, this means that $||A^{k+1} \sset A^k|| \in F$ for all $k$.
Hence for any $k \in \omega$, letting $J^k = \cap_{i < k} ||A^{i+1} \sset A^i|| \cap ||A^k \neq \emptyset||$, we have that $J^k \in F$ and $J^i \sset J^k$ for any $i \geq k$. Now if we construct $a \in \fhyl$ such that for any $k \in \omega$ and any $n \geq k$, if $n \in J^k$, then $a(n) \in A^k_n$, then as $\{n \in \omega : k \leq n\} \cap J^k \in F$ since $F$ extends the Fréchet filter on $\omega$, we will have that $a \in A^k_F$ for all $a \in F$. We define $a$ in the standard way: for any $n \in J
^1$, let $k_n = max\{k : k \leq n \text{ and } n \in J^k\}$. Clearly, $n \in J^{k_n}$, so by definition $n \in ||A^{k_n} \neq \emptyset||$, i.e., $\mathbb{R} \models \exists x A_n^{k_n}(x)$ and moreover, $A^{j+1}_n \sset A^j_n$ for any $j < k$. So let $a(n) \in A^{k_n}$ if $n \in J^1$, and let $a(n)$ be arbitrary otherwise. Finally, fix $k \in \omega$, and note that, if $n \geq k$ and $n \in J^k$, then $k \leq k_n$, hence $a(n) \in A_n
^{k_n} \sset A_n^k$. Thus $a \in \bigcap_{k \in \omega} (X^k)_F$.
\end{proof}

Let us conclude this section with some results on the cardinality of $F$-internal sets. Of course, since our internal notion of equality is an equivalence relation relative to a point $F \in \frec$, we must be careful in defining internally what finiteness, countability and uncountability mean for internal sets. For example, we would clearly want the set of natural numbers less than some positive natural number $n$ to be finite. But it is easy to see that for any $F \in \frec$, there are at least countably many $a \in \fhyl$ for which $F \fr a < n$, since for any $k \in \omega$, the function $a_k : \omega \to \RR$ given by $a_k(i) = 0$ if $i < k$ and $a_k(i) = n-1$ otherwise will be one such element of $\fhyl$. This motivates the following definitions:

\begin{definition}
Let $F \in \frec$. A subset $A$ of $\fhyl$ is \textit{$F$-finite} if $A_F = \bigcup_{i \in I} a^i_F$ for some finite set $I$ with $a^i \in \fhyl$ for all $i \in I$, \textit{$F$-countable} if there is a countable sequence $\{a^i\sq$ of elements of $\fhyl$ such that $A = \bigcup_{i \in \omega} a^i_F$, and \textit{$F$-uncountable} otherwise.
\end{definition}

It is clear from the previous definition that sets such as the set of all natural numbers below some positive number $n$ are $F$-finite for any $F \in \frec$, just as the set of standard natural numbers is $F$-countable for any $F$. However, the following lemma also shows that our relativized notions of finiteness and countability interact with the notion of an internal set in a meaningful way.
\begin{lemma}
\begin{enumerate}
\item[]
    \item Let $A$ be a subset of $\fhyl$ and $F \in \frec$ be such that for all $a \in \fhyl$, $a \in A$ iff there is $r \in \RR$ such that $a \in \delta(r)_F$. Then $A$ is $F$-internal iff it is $F$-finite.
    \item Any internal subset of $\fhyl$ is either finite or uncountable.
\end{enumerate}

\end{lemma}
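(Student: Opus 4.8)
The plan is to treat the two parts separately, and for part~(1) I would in fact argue that \emph{neither} side of the biconditional can hold for such an $A$, so that the equivalence is true by default. First, $A$ is not $F$-finite: by hypothesis $A \supseteq \bigcup_{r \in \RR} \delta(r)_F$, and the classes $\delta(r)_F$ are pairwise disjoint and nonempty --- if $F \fr \delta(r) = \delta(r')$ then $\{i : r = r'\} \in F$, forcing $r = r'$ since $\emptyset \notin F$ --- so $A$ contains continuum many pairwise disjoint nonempty subsets, whereas any $F$-finite set is a union of finitely many $\sim_F$-classes and so cannot absorb them all. For the other side, suppose toward a contradiction that $A$ is $F$-internal, with associated internal predicate also written $A$, so that $A = \itp(F,A)$; since $\delta(n) \in \delta(n)_F \subseteq A$ we get $F \fr A(\delta(n))$ for every $n \in \mathbb{N}$. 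For each such $n$ put $X^n := \{ b \in \fhyl : F \fr A(b) \me b \geq \delta(n)\}$. By the $F$-Internal Set Definition Principle (\cref{indef}) each $X^n$ is $F$-internal; it contains $\delta(n)$, so $F \fr \exists x\, X^n(x)$, and (with the obvious defining sequences) $X^{n+1} \subseteq X^n$ coordinatewise, so $F \fr \forall x (X^{n+1}(x) \to X^n(x))$. The Countable $F$-Saturation Principle (\cref{sat}) then yields $a \in \fhyl$ with $F \fr A(a)$ and $F \fr a \geq \delta(n)$ for all $n$. From $F \fr A(a)$ and the hypothesis on $A$ there is $r \in \RR$ with $F \fr a = \delta(r)$; equality-as-congruence gives $F \fr \delta(r) \geq \delta(n)$, i.e.\ $\{i : r \geq n\} \in F$, hence $r \geq n$, for every $n \in \mathbb{N}$, contradicting the Archimedean property of $\RR$.

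For part~(2) the first step is to recast the statement combinatorially. Writing $\{S_n\}$ for the sequence of subsets of $\RR$ underlying the internal predicate $S$, the set $Q := \{i : S_i \neq \emptyset\}$ lies in $F$ as soon as $S_F \neq \emptyset$, and --- using the Axiom of Countable Choices to pick witnesses --- every element of $S_F$ is $\sim_F$-equivalent to one of $\prod_{i \in Q} S_i$; thus $S_F/{\sim_F}$ is exactly the reduced power of the family $\{S_i\}_{i \in Q}$ of nonempty sets modulo $F$ restricted to $Q$. It therefore suffices to show that any such reduced power is finite or of cardinality at least $2^{\aleph_0}$ (stronger than what is asked). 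The tool I would isolate is a purely Boolean-algebraic dichotomy: if $J$ is a proper ideal on $\omega$ containing all finite sets, then $\mathcal{P}(\omega)/J$ is finite or of cardinality at least $2^{\aleph_0}$ --- proved by extracting (via $DC$) an infinite antichain from $\mathcal{P}(\omega)/J$ whenever it is infinite, disjointifying it to a countable family of pairwise disjoint nonzero elements, and using that family to embed $\mathcal{P}(\omega)$ into the quotient.

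The proof then divides according to the sizes of the $S_i$. If $\{i : |S_i| \geq 2\}$ is complemented in $F$ (equivalently $\{i : |S_i| = 1\} \in F$), any two members of $\prod_i S_i$ agree on a set in $F$ and the reduced power is a singleton. If $\{i : |S_i| \geq 2\}$ is neither in $F$ nor complemented in $F$, one localizes: restricting to $P := \{i : |S_i| \geq 2\}$ and replacing $F$ by the still-proper, still-non-principal filter $\{A \subseteq P : A \cup (\omega \setminus P) \in F\}$ reduces to the case $|S_i| \geq 2$ for all $i$. In that case, choosing two distinct elements of each $S_i$ (Countable Choice again) embeds $\mathcal{P}(\omega)/J_F$, with $J_F$ the dual ideal of $F$, into the reduced power, so by the Boolean dichotomy we are done unless $\mathcal{P}(\omega)/J_F$ is finite. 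In the latter case $F$ is forced to be a finite intersection $V_1 \cap \dots \cap V_m$ of non-principal ultrafilters (each atom of the finite quotient yields one); $\prod_i S_i/F$ then embeds into $\prod_{\ell}(\prod_i S_i/V_\ell)$, the classical fact that an ultrapower of sets is finite or of size at least $2^{\aleph_0}$ --- provable in $ZF+DC$ once the ultrafilter is given --- handles each factor, and a short argument modifying a representative so that it agrees with it on a set in $V_\ell$ but with a fixed member of $\prod_i S_i/F$ elsewhere shows $\prod_i S_i/F$ is at least as large as each $\prod_i S_i/V_\ell$.

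The step I expect to be the real obstacle in part~(2) is exactly this handling of arbitrary filters: the smooth pigeonhole moves behind the classical ``internal set minus a finite set'' proof of saturation rely on the filter being prime (a finite union of small sets being small), and this fails for a general $F \in \frec$; it is to sidestep this that one is pushed into the localization-plus-Boolean-algebra argument, with the finite-intersection-of-ultrafilters case standing apart as a genuinely separate sub-case.
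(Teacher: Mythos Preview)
For part~(1), you have read the hypothesis literally --- ``$a \in A$ iff there is $r \in \RR$ with $a \in \delta(r)_F$'' --- as pinning $A$ down to be exactly the set of all standard-at-$F$ hyperreals, and then argued (correctly, for that single set) that it is neither $F$-finite nor $F$-internal, making the biconditional vacuous. But the paper intends the hypothesis to say merely that $A$ is \emph{some} union of classes $\delta(r)_F$, i.e.\ $A = \bigcup_{r \in B} \delta(r)_F$ for an arbitrary $B \sset \RR$; the surrounding discussion makes this clear, as the result is meant to be the $\fhyl$-analogue of ``the only internal subsets of $\RR$ in the hyperreal line are the finite ones''. Under that reading your argument collapses: for finite $B$ the set $A$ \emph{is} both $F$-finite and $F$-internal, so the biconditional is substantive, and the hard direction is to show that when $B$ is infinite, $A$ cannot be $F$-internal. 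The paper does this by a direct diagonal construction: given an internal predicate with $A = B_F$ and an injection of $\omega$ into $B$, it builds an $a \in B_F$ not lying in any $\delta(r)_F$, contradicting the hypothesis on $A$. Your saturation trick with the sets $X^n = \{b : F \fr A(b) \me b \geq \delta(n)\}$ only works because you assumed all of $\mathbb{N}$ sits in $A$; for a general infinite $B$ no such monotone family is available.

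For part~(2), your argument is essentially correct but takes a much longer route than the paper. The paper's proof is a three-line application of countable $F$-saturation: if $A_F = \bigcup_{i \in \omega} a^i_F$, the sets $A^k = A \setminus \{a^1,\dotsc,a^k\}$ are $F$-internal by \cref{indef} and nested, so if all are nonempty, \cref{sat} produces an element of $A_F$ outside every $a^i_F$; hence some $A^k_F$ is empty and $A$ is $F$-finite. Your approach --- passing to the reduced power $\prod_i S_i / F$, invoking a dichotomy for $\mathcal{P}(\omega)/J_F$, and handling the finite-quotient case by decomposing $F$ as an intersection of ultrafilters --- does buy the stronger conclusion ``$F$-finite or at least $2^{\aleph_0}$ classes'', but at considerable cost. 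In particular, the obstacle you anticipate (that the usual pigeonhole moves require the filter to be prime) simply does not arise once one argues via saturation as the paper does: \cref{sat} already holds for arbitrary $F \in \frec$, and the nestedness of the $A^k$ is automatic from their definition.
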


\begin{proof}
\begin{enumerate} 
    \item[]
    \item Let $B \sset \RR$ be the set $\{r \in \RR : \delta(r) \in A\}$. By assumption, we have that $A = \bigcup_{r \in B} \delta(r)_F$. Note that $A$ is $F$-finite iff $B$ is finite. Clearly, if $B = \{r_1,...,r_n\}$ is finite, then for all $a \in \fhyl$, $a \in A$ iff $F \fr a = \delta(r_1)  \jo \cdots \jo a = \delta(r_n)$, hence $A$ is $F$-internal by \cref{indef}. Conversely, suppose that $A$ is $F$-internal, i.e., that there is a sequence $\{B_i\sq$ such that $B_F = A$. Assume $B$ is infinite, and let $\{r^i\}_{i \in \omega}$ be a countable sequence of elements of $B$.\footnote{Note that we are using the Axiom of Countable Choices here to ensure that the infinite set $B$ is also Dedekind-infinite.} Let $J$ be the set of all $i \in \omega$ such that $B_i$ is infinite, and relabel $J$ and $\omega \setminus J$ as $\{n_i\sq$ and $\{m_i\sq$ respectively. Define $a : \omega \to \RR$ as follows. For any $i \in \omega$, $a(m_i) = r^m$ for $m$ largest such that $r_m \in B_{m_i}$ (or some fixed $r \notin B$ if $B_{m_i}$ is empty), and $a(n_i) = r^n$ for $n$ smallest such that $r_n \in B_{n_i} \setminus \{a(n_j) : j < i\}$. By construction, $i \in ||B_i(a)||$ whenever $B_i \neq \emptyset$, hence $a \in B_F$. Moreover, for any $i \in \omega$, $|||a = r^i|| \cap J| \leq 1$, and therefore $\omega \setminus (||a = r^i|| \cap J) \in F$. Note also that $j \in ||a = r^i|| \setminus J$ implies that $|B_j| \leq i$. But since $F \fr \exists x_1,...,x_{i+1} \bigme_{k\neq l\leq i+1}x_k \neq x_l \me \bigme_{k \leq i+1} B(x_k)$, it follows that $||a = r^i|| \setminus J \notin F$. But this implies that $||a = r^i|| \notin F$ for any $i \in \omega$, and hence that $a \in B_F \setminus A$, contradicting our hypothesis. Hence if $A$ is $F$-internal, it must also be finite. 
    \item Let $A$ be an internal subset of $\fhyl$ with $\{A_i\sq$ its associated countable sequence of subsets of $\RR$, and suppose that $A$ is $F$-countable. This means that we have a sequence $\{a^i\sq$ of elements of $\fhyl$ such that $A_F = \bigcup_{i \in \omega} a^i_F$. By the Definition Principle, for any $n \in \omega$, the set $A^i = A \setminus \{a^1,\dotsc.a^i\}$ is $F$-internal. But if $F \fr \exists x A^i(x)$ for all $i \in \omega$, then by \cref{sat} there is $a \in \fhyl$ such that $F \fr A^i(a)$ for all $i \in \omega$, a contradiction. Therefore $A^k_F$ must be empty for some $k \in \omega$, which implies that $A_F = \bigcup_{i \leq k} a^k_F$, and thus that $A$ is $F$-finite. Hence any $F$-internal set is either $F$-finite or uncountable. \qedhere
\end{enumerate}
\end{proof}

The previous lemma echoes a well-known result for Luxemburg ultrapowers, namely that the only internal subsets of $\RR$ on the hyperreal line are the finite ones and that any internal subset of the hyperreal line is either finite or uncountable \cite[Sections 11.7 and 11.12]{goldblatt2012lectures}. As an immediate consequence, we have that the set of all standard natural numbers $\mathcal{N}$ and the set of all standard real numbers $\RR$ are always $F$-external.\\

Let us sum up our progress so far. As we have seen in this section, the $F$-hyperreal line $\fhyl$ is a structure that shares many important features of the classical hyperreal line, including versions of the Transfer and Saturation Principles, infinitesimals, and a robust notion of internal set. The crucial difference though is that, while the classical hyperreal is a Tarskian structure whose existence requires strong non-constructive hypotheses like the Ultrafilter Lemma, $\fhyl$ is a possibility structure which can be defined in a semi-constructive setting, assuming only $ZF+DC$. As mentioned in \cref{intro}, the idea of proposing more constructive frameworks for nonstandard analysis is far from new and is in fact arguably as old as Robinsonian nonstandard analysis itself \cite{schmieden1958erweiterung}. As we will see in the next three sections, $\fhyl$ can be thought of as an ``intersection point'' of three such research programs: the use of the Fr\'echet filter in place of a non-principal ultrafilter on $\omega$, the use of sheaves instead of sets as the carrier of a model, and the use of Boolean-valued instead of classical, two-valued, Tarskian models. As we will see, the comparison of $\fhyl$ with each such proposal in the literature also allows for different perspectives on $\fhyl$ and on its relationship with the classical hyperreal line.

\section{The Asymptotic Approach} \label{asym}
The first alternative approach to the hyperreal line that I will discuss aims to do away with the non-constructive aspect of Robinsonian nonstandard analysis by substituting the Fr\'echet filter for a non-principal ultrafilter on $\omega$. Although constructive, the structure thus obtained, known as a reduced power, has some significant drawbacks, most importantly the lack of a Transfer Principle. On the other hand, according to its proponents, this idea has the advantage of not introducing in analysis an ``exotic'' object like a non-principal ultrafilter---an \textit{intangible}, to use Schechter's terminology in \cite{schechter1996handbook}---and appears to be closer both to ordinary mathematical practice and to historical work that involves reasoning about mathematical objects ``in the limit''. I will first present Laugwitz's work in \cite{laugwitz}, before discussing a recent proposal by Tao in \cite{taocheapnsa}, which can be seen as a refinement of Laugwitz's idea, and comparing each proposal to $\fhyl$.

\subsection{Laugwitz and Reduced Powers} \label{laug}

Laugwitz's starting point is slightly different from what we have discussed so far. Indeed, while we have been taking the Cantor-Dedekind real line for granted and exploring how to expand it to a hyperreal line, Laugwitz is first interested in exploring the kind of structures that can be obtained from sequences of rationals. Drawing a contrast with Cantor's construction of the reals as equivalence classes of Cauchy-converging sequences of rationals, he writes:

\begin{quote}
    The two steps of the sequential approach to the real numbers are (i) to restrict the set of admissible rational sequences to ``fundamental'' sequences, and (ii) to furnish this set of fundamental sequences with an equivalence relation. Since our extended number system is expected to contain ``more'' numbers than $\mathbf{R}$ itself, we have to change these assumptions, and there are two possibilities to achieve this aim: (i) to admit more sequences, (ii) to relax the equivalence relation. In a radical way, we shall (i) admit all sequences, and (ii) we shall identify only those sequences which are equal for almost all numbers $n$, $a \equiv b$ iff the complement of $\{n \in \mathbf{N} | a_n = b_n\}$ is finite. \cite[p.~11]{laugwitz}
\end{quote}

Formally, Laugwitz works with a reduced power of a field $(K,+,\cdot,0,1)$ modulo the Fr\'echet filter $F_0$. The domain of such a structure, noted $^\Omega K$ by Laugwitz, is given by equivalence classes of sequences $a : \omega \to K$, where two such sequences $a$ and $b$ are equivalent if and only if they agree on cofinitely many values of $n$. Operations on $K$ can be extended to $^\Omega K$ pointwise: for example, given two sequences $a$, $b$ with equivalence classes $a_{F_0}$ and $b_{F_0}$, $a_{F_0} + b_{F_0}$ is the equivalence class of the function $c : \omega \to K$ defined by $c(n) = a(n)+b(n)$ for every $n \in \omega$. The properties of a filter are enough to guarantee that $(^\Omega K, +, \cdot)$ is a well-defined first-order structure. But, as Laugwitz notes, working with a reduced power has some important drawbacks:

\begin{quote}
    It is an easy task to show that $(^\Omega K,+,\cdot)$ is a commutative ring with unit element $[1]$ [$\dotsc$]. Unfortunately $^\Omega K$ fails to be a field. Consider the sequences 
    \begin{align}
        a_n = 1 + (-1)^n, \hspace{1em} b_n = 1 - (-1)^n \label{3}
    \end{align}
    then $\alpha \neq 0$ and $\beta \neq 0$ since neither the set of odd numbers nor the set of even numbers is cofinite. But, obviously, $\alpha \cdot \beta =0$. So $(^\Omega K, +, \cdot)$ is a ring with divisors of zero and certainly not a field. Moreover, the generalized numbers $\alpha$, $\beta$ defined by (\ref{3}) are neither positive nor negative, which means that our ring is only partially ordered by $<$. [p.~13]
\end{quote}

Of course, this is an immediate consequence of the use of the Fr\'echet filter rather than a non-principal ultrafilter: while a weaker version of \L\'os's Theorem, the preservation in the reduced power of sentences true in the base structure, holds for a fairly large class of first-order sentences (namely Horn sentences, see \cite{chang1990model}) that includes equations, it does not hold for all first-order sentences, particularly sentences involving disjunctions and negations. This means that the ``expanded real line'' that Laugwitz is considering does not have the structure of a field and is not even a linear order. Interestingly, Laugwitz argues that this feature is not necessarily an issue, by drawing a parallel to the fact that the complex numbers are a domain extension of the ordered field of reals that is not itself an ordered field:

\begin{quote}
    Now this situation is not surprising. We are used to the fact that not all good properties of a number system extend automatically to a larger system. For instance, the order structure of $\mathbf{R}$ cannot be extended to the complex numbers. But nevertheless we reach the essential goal of the complex numbers, to solve algebraic equations. Since we aimed at introducing infinitely large numbers as well as infinitesimals we shall look for elements of $^\Omega K$ enjoying these properties. [p.~13]
\end{quote}

Laugwitz's argument seems to be that working in a structure that is neither a field nor a linear order is an acceptable price to pay in order to have infinitesimal and infinitely large numbers, just like working in a non linearly-ordered field is an acceptable price to pay to obtain a field that is algebraically closed. Of course, much could be said about this analogy. Although no algebraically closed field can be linearly ordered, since it follows from the ordered field axioms that $x^2 > 0$ whenever $x < 0$ or $x > 0$, it is not true that a linearly ordered field must always be Archimedean: the hyperreal line is a counterexample, but so are Conway's surreal numbers, or the Levi-Civita field \cite{ehrlich2013real}. As we have seen in the previous section, $\fhyl$ is also a possibility structure on which all the axioms of a field are valid.

Nevertheless, Laugwitz goes on to show how his framework allows him to rigorously reproduce several historical arguments from Leibniz, Euler, and Cauchy, among others. His goal is to show, quite convincingly, that the reduced power construction captures a powerful and historically significant intuition that the behavior of sequences ``in the limit'' is determined by their behavior ``almost everywhere'', i.e., on cofinitely many of their values:

\begin{quote}
    I should like to emphasize that our rather simple definitions already permit correct interpretations of large parts of the history of infinitesimal mathematics as well as of the use of divergent expressions. [...] The proofs invariably run as follows: Write $n$ instead of $^\Omega$ and show that everything holds for almost all $n \in \mathbf{N}$! [p.~14]
\end{quote}

Finally, Laugwitz shows how a system isomorphic to the Cantor-Dedekind real numbers can be obtained as a quotient of the structure $^\Omega \mathbb{Q}$. His notation is at times a bit obscure, but the main idea is to define a \textit{cut number} in $^\Omega \mathbb{Q}$ as a sequence $a: \omega \to \mathbb{Q}$ such that the absolute value of $a(n)$ is bounded by some rational $q$ cofinitely often, and for any rational number $r$, $|a(n) - r|$ is smaller than any fraction $\frac{1}{m}$ for any ${m \in \omega}$ for some large enough $n$ (in which case $a$ is infinitesimally close to $r$), or $a(n) < r$ cofinitely often, or $a(n) > r$ cofinitely often. In other words, a cut number is a bounded element $a_{F_0}$ of the reduced power of $\mathbb{Q}$ modulo the Fr\'echet filter for which a form of the trichotomy law holds with respect to the rationals: for all $q \in \mathbb{Q}$, either $^\Omega \mathbb{Q} \models a_{F_0} < q \jo a_{F_0} > q$, or $a_{F_0}$ is in the \textit{monad} of $q$ (denoted $\text{mon }  q$) i.e., $^\Omega \mathbb{Q} \models |a_{F_0} - q| < \frac{1}{m}$ for every natural number $m$.\footnote{Laugwitz's \textit{monads} correspond to what we called \textit{halos} in \cref{halo}, following \cite{goldblatt2012lectures}.} Laugwitz goes on to prove that the cut numbers form a subring $C$ of the bounded numbers of $^\Omega \mathbb{Q}$, and that mon $0$, i.e., the set of numbers infinitesimally close to $0$, is a maximal ideal $C$. Taking the quotient $C/\text{mon } 0$ thus yields a field, which can then be shown to be isomorphic to the Cantor-Dedekind reals. This construction of the reals from the hyperrational numbers can be carried out in a similar fashion in classical nonstandard analysis, by just quotienting the ring of bounded hyperrationals by the relation of infinitesimal closeness. Laugwitz's setting however requires slightly more work, since the ring $^\Omega \mathbb{Q}$ is not totally ordered. Laugwitz uses this result to give an intuitive interpretation of the cut numbers, comparing once more the role they can play in calculus to the role that the complex numbers play in algebra:

\begin{quote}
    We are now in a position to give some intuitive interpretations of our concepts. Though the cut numbers themselves are not a totally ordered set their equivalence classes or monads enjoy this property. We can imagine them as clusters like pearls on a string.
    Of course between each two of these clusters there are others.[...] The elements of these clusters can be regarded as representatives of the fine structure of real numbers. [...] Though these numbers lack a representation in the geometrical continuum we use them as calculation devices just as everybody uses complex numbers to get real results more easily. [p.~18]
\end{quote}

To sum up, Laugwitz works with a reduced power of a field $K$ modulo the Fr\'echet filter in order to introduce new infinite and infinitesimal numbers in $K$. He shows how many classical results about infinite series and calculus can be derived in an elegant fashion in this framework and how the reduced power of $\mathbb{Q}$ allows for an alternative construction of the Cantor-Dedekind reals. The major drawback of this approach, of course, is the lack of a Transfer Principle between the field $K$ and the reduced power $^\Omega K$, which fails to be a field and is not totally ordered. It is worth mentioning that, in some precise sense, the $F$-hyperreal line $\fhyl$ contains a copy of the reduced power of $\RR$ modulo the Fr\'echet filter $F_0$. Indeed, taking the quotient of $\RR^\omega$ modulo the relation $\sim_{F_0} = \{(a,b) \in \RR^\omega : F_0 \fr a = b\}$, and interpreting any $\la$-relation symbol $R(\overline{x})$ by $\{\overline{a_{F_0}} \in (\RR^\omega)_{/\sim_{F_0}} : F_0 \fr R(\overline{a})\}$, one obtains exactly Laugwitz's $^\Omega\RR$. Note however that this does not imply in any way that the forcing relation at $F_0$ coincides with satisfaction in $^\Omega\RR$, since the Transfer Principle guarantees that the axioms of an ordered field are forced at any $F \in \frec$. The difference comes down to the recursive clauses of the forcing conditions in possibility semantics, which, unlike the recursive clauses in Tarskian semantics, allow us to consider not only satisfaction of atomic formulas at a reduced power of $\RR$ modulo the Fr\'echet filter but also at any other reduced power of $\RR$ modulo a non-principal ultrafilter on $\omega$.

Since validity on $\fhyl$ coincides with the forcing relation at $F_0$, we may therefore argue that $\fhyl$, just like Laugwitz's framework, formalizes the idea that many classical arguments from calculus rely on the intuition that the behavior of mathematical objects ``in the limit'' is determined by how often they exhibit such a behavior in the finite and that this intuition can be given a rigorous and elegant presentation in a structure that contains infinitely large and infinitely small elements. But where Laugwitz weakens the requirements on the axioms that such an enlarged structure should satisfy (requiring it to be merely a ring rather than an ordered field), we weaken the requirement that this enlarged structure be a classical, Tarskian structure and construct it instead as a possibility structure that satisfies all the axioms of ordered fields. This trade-off between standard semantics and powerful axiomatics appears also in Tao's proposal of a ``cheap'' nonstandard analysis, which shares many features with Laugwitz's approach, but can also be thought of as an informal version of $\fhyl$. 

\subsection{Tao and ``cheap'' Nonstandard Analysis} \label{tao}

In \cite{taocheapnsa}, Tao introduces what he calls ``cheap'' nonstandard analysis as an informal framework that allows one to apply some of the elegant reasoning of nonstandard analysis, without relying on the existence of non-principal ultrafilters and the construction of ultrapowers. Tao's motivation seems both foundational, as the use of infinitary methods to obtain ``finitary'' results seems to violate some purity concern, as well as purely technical:

\begin{quote}
    On the other hand, nonprincipal ultrafilters do have some unappealing features. [...O]ne cannot actually write down any explicit example of a nonprincipal ultrafilter, but must instead rely on nonconstructive tools such as Zorn’s lemma, [...] or the [Ultrafilter Lemma] to locate one. As such, ultrafilters definitely belong to the “infinitary” side of mathematics, and one may feel that it is inappropriate to use such tools for “finitary” mathematical applications, such as those which arise in hard analysis. From a more practical viewpoint, because of the presence of the infinitary ultrafilter, it can be quite difficult [...] to take a finitary result proven via nonstandard analysis and coax an effective quantitative bound from it.
\end{quote}

Tao's suggestion is very close to Laugwitz's idea, as it involves working with the Fr\'echet filter rather than with a non-principal ultrafilter on $\omega$. Tao also seems to think that, unlike non-principal ultrafilters, the Fr\'echet filter is not an ``exotic'' mathematical object but rather already plays a key role in any kind of ordinary mathematical reasoning that involves passing to a limit:

\begin{quote}
    There is however a “cheap” version of nonstandard analysis which is less powerful than the full version, but is not as infinitary in that it is constructive [...]. It is obtained by replacing the nonprincipal ultrafilter in fully nonstandard analysis with the more classical Fréchet filter of cofinite subsets of the natural numbers, which is the filter that implicitly underlies the concept of the classical limit ${\lim_{{\bf n} \rightarrow \infty} a_{\bf n}}$ of a sequence when the underlying asymptotic parameter ${{\bf n}}$ goes off to infinity. As such, “cheap nonstandard analysis” aligns very well with traditional mathematics [...]. 
\end{quote}

More precisely, the starting point of Tao's proposal is to consider mathematical objects that may vary along some parameter $\bf{n}$, which ranges over $\omega$. Such a mathematical object $x_{\bf{n}}$ can therefore be thought of as a countable sequence of classical objects. The mathematical universe of Tao's cheap nonstandard analysis is therefore two-sorted: there are \textit{standard} or classical mathematical objects, which do not depend on the parameter $\bf{n}$, and \textit{nonstandard} ones, which may vary with $\bf{n}$. Of course, we may always think of standard objects as nonstandard ones that happen to not vary  with $\bf{n}$. Two nonstandard objects $x_{\bf{n}}$ and $y_{\bf{n}}$ are considered equal if they differ on only finitely many values of the parameter $\bf{n}$. More generally, a tuple of nonstandard objects $\overline{x_{\bf{n}}}$ stand in a relation $R$ if and only if they stand in relation $R$ for cofinitely many values of the parameter $\bf{n}$. From this, it follows pretty clearly that Tao's approach amounts to working with equivalence classes of the countable sequences of objects modulo the Fr\'echet filter and is thus very similar to Laugwitz's proposal. As Tao himself remarks, it has similar shortcomings:

\begin{quote}
    The catch is that the Fréchet filter is merely a filter and not an ultrafilter, and as such some of the key features of fully nonstandard analysis are lost. Most notably, the law of the excluded middle does not transfer over perfectly from standard analysis to cheap nonstandard analysis [...]. The loss of such a fundamental law of mathematical reasoning may seem like a major disadvantage for cheap nonstandard analysis, and it does indeed make cheap nonstandard analysis somewhat weaker than fully nonstandard analysis.
\end{quote}

Tao's framework therefore not only does not satisfy the Transfer Principle but does not even allow for classical reasoning. Tao argues that the problem can often be bypassed by reasoning intuitionistically and that a large class of formulas transfer from the standard universe to the nonstandard universe, although he does not precisely isolate a fragment of the first-order language for which the Transfer Principle would remain valid. Moreover, he also discusses a way of retrieving some form of the law of excluded middle, by using a technique that he calls ``passing to a subsequence'':

\begin{quote} Furthermore, the law of the excluded middle can be recovered by adopting the freedom to pass to subsequences with regards to the asymptotic parameter ${{\bf n}}$; this technique is already in widespread use in the analysis of partial differential equations, although it is generally referred to by names such as “the compactness method” rather than as “cheap nonstandard analysis”.
\end{quote}

What does Tao mean by ``passing to a subsequence'' here? His treatment of the notion remains informal, but the idea is that a given mathematical object $x_{\mathbf{n}}$, indexed by some parameter $\bf{n}$ that ranges over $\omega$, may be identified with another mathematical object $x'_{\mathbf{n}}$, in which the parameter $\bf{n}$ now only ranges over some proper infinite subset $\Sigma \sset \omega$, and $x'(n) \neq x(n)$ for only finitely many $n \in \Sigma$. The example he gives is that of two parametrized reals $x_{\mathbf{n}}$ and $y_{\mathbf{n}}$ such that $x_{\mathbf{n}}y_{\mathbf{n}} = 0$. Clearly, this does not imply that $x_{\mathbf{n}} = 0$ or $y_{\mathbf{n}} = 0$, but it implies that there must be some infinite $\Sigma \sset \omega$ such that $x(n) = 0$ for all $n \in \Sigma$ or $y(n) = 0$ for all $n \in \Sigma$, since $\{n \in \omega :x(n)y(n) = 0\} = \{n \in \omega : x(n) = 0\} \cup \{n \in \omega : y(n)=0\} $ is infinite. Therefore, by ``passing to the subsequence'' parametrized by $n$ ranging over $\Sigma$, it follows that $x_{\mathbf{n}} = 0$ or $y_{\mathbf{n}} = 0$. Now let $F_\Sigma$ be the filter on $\omega$ extending the Fr\'echet filter and generated by $\Sigma$. Clearly, ``passing to the subsequence'' determined by $\Sigma$ amounts to declaring that $x_{\mathbf{n}}$ or $y_{\mathbf{n}}$ is equivalent to $0$, provided we weaken the notion of equivalence between two parametrized objects from being equal on a cofinite set to being equivalent on a set in $F_\Sigma$. Thus Tao's technique of passing to a subsequence in cheap nonstandard analysis can be identified with our use in $\fhyl$ of the entire poset $\frec$ of non-principal filters on $\omega$, since reducing the range of the parameter $\bf{n}$ amounts to identifying together more parametrized objects, just like more elements of $\fhyl$ are forced to be equal at a filter $G$ than at a filter $F$ when $G \rset F$. More evidence that Tao's cheap nonstandard analysis can be meaningfully interpreted as an informal version of $\fhyl$ comes from his remark that ``in cheap nonstandard analysis one only works with statements which remain valid under the operation of restricting the underlying domain of the asymptotic parameter'', which is analogous to the \textit{persistence} condition imposed on formulas in possibility semantics, and from what Tao dubs a version of the Saturation Principle:

\begin{quote}Let us call a nonstandard property $P(x)$ pertaining to a nonstandard object $x$ satisfiable if, no matter how one restricts the domain $\Sigma$ from its current value, one can find an $x$ for which $P(x)$ becomes true, possibly after a further restriction of the domain $\Sigma$. The countable saturation property is then the assertion that if one has a countable sequence $P_1, P_2, \ldots$ of properties, such that $P_1(x),\ldots,P_n(x)$ is jointly satisfiable for any finite $n$, then the entire sequence $P_1(x), P_2(x), \ldots$ is simultaneously satisfiable.
\end{quote}

The analogy between a parameter space $\Sigma$ and the non-principal filter $F_\Sigma$ on $\omega$ generated by the finite set $\Sigma$ on one hand, and between passing to a subsequence $\Sigma'$ and extending the filter to one containing $\Sigma'$, allows us to see the similarity between Tao's principle and \cref{sat}, as Tao's notion of satisfiability of a property $P(x)$ on a parameter space $\Sigma$ becomes quite transparently analogous to the formula $\exists x P(x)$ being forced at $F_\Sigma$.\\

To sum up, Tao's cheap nonstandard analysis has the same starting point as Laugwitz's proposal, namely considering objects that vary along a parameter $\bf{n}$ and whose properties are exactly those properties that these varying objects have on cofinitely many of their values. The main drawback of such a framework is that it does not satisfy the law of excluded middle and therefore lacks the powerful Transfer Principle of classical nonstandard analysis. As we have argued, $\fhyl$ stems from a similar idea but solves the Transfer Principle problem thanks to the machinery of possibility semantics, which also allows one to reason classically within the structure. Although it remains informal, Tao's proposal of having the freedom to pass to a subsequence by varying the range of the parameter $\bf{n}$ can conceivably be seen as analogous to the freedom to vary the non-principal filter on $\omega$ that is afforded by possibility semantics. In this sense, $\fhyl$ can a seen as formal way to capture Tao's proposal, with the added benefit of having a powerful version of the Transfer Principle. Interestingly, Tao himself argues that ``[t]he dynamic nature of the parameter space $\Sigma$ makes it a little tricky to properly formalise cheap nonstandard analysis in the usual static framework of mathematical logic''. This seems to suggest that the central feature of $\fhyl$ that allows it to formalize Tao's idea lies in its more ``dynamic'' nature, i.e., in the fact that it is able to reason across various classical models rather than just within one. As we shall see in the next section, this ``dynamic'' character of $\fhyl$ can be captured quite precisely, by comparing it to another constructive version of nonstandard analysis that uses sheaf semantics and topos theory.

\section{The Dynamic Approach} \label{dyn}

The second kind of work I will be considering approaches nonstandard analysis from a constructive and intuitionistic perspective. There are, of course, many different constructive or quasi-constructive approaches to analysis, which differ more or less dramatically from classical analysis, such as Bishop's constructive analysis \cite{bishop1967} or Weyl's predicative analysis \cite{weyl1918}. In a series of articles \cite{palmgren1995constructive,palmgren1996constructive,palmgren1998developments}, Erik Palmgren has developed a constructive version of nonstandard analysis which shares many aspects of $\fhyl$. Palmgren's model, however, uses sheaf semantics, and the nonstandard extensions of a set he defines are not themselves sets but rather set-valued functors on a large category of filters. As such, his version of the hyperreal line can be thought of as a dynamic or varying model, a model whose exact domain varies along each object in the category of filters. As we shall see below, Palmgren's construction can be dramatically simplified in a semi-constructive setting, which yields a structure that is essentially equivalent to $\fhyl$.

\subsection{Palmgren's Sheaf Model} \label{pal}

Palmgren's starting point is the observation that, from a constructive perspective, the existence of a nonstandard extension of the real line on which the Transfer and Saturation Principle hold seems to pose some insurmountable problems. Indeed, if $\phi(x)$ is any statement about the natural numbers, then the following is constructively valid:
\[\mathbb{R} \models \forall n \in \mathbb{N} ( \forall m \leq n (m \in \mathbb{N} \to (\phi(m) \jo \neg\phi(m)))).\]
Thus if there is a nonstandard extension $\mathbb{R}^*$ of $\mathbb{R}$ for which the Transfer Principle holds, it follows that:
\[\mathbb{R}^* \models \forall n \in \mathbb{N}^* ( \forall m \leq n (m \in \mathbb{N^*} \to (\phi(m) \jo \neg\phi(m)))),\]
But if $\mathbb{R}^*$ satisfies the Saturation Principle, it contains some infinitely large hypernatural number $N$, which implies that:
\[\mathbb{R}^* \models \forall m \leq N (m \in \mathbb{N}^* \to (\phi(m) \jo \neg\phi(m))).\]
Thus for any natural number $m$, \[\mathbb{R}^* \models \phi(m) \text{ or } \mathbb{R}^* \models \neg\phi(m),\] which by Transfer implies that \[\mathbb{R} \models \phi(m) \text{ or } \mathbb{R} \models \neg \phi(m), \] which is certainly a problematic conclusion for the constructivist, as it would imply that the Law of Excluded Middle holds on the natural number domain. Thus, just like the existence of a Tarskian structure satisfying  the Transfer and Saturation Principle implies in a semi-constructive setting the existence of a non-principal ultrafilter on $\omega$, it also implies in a constructive setting the Law of Excluded Middle on the domain of the natural numbers. Building on some previous work by Moerdijk \cite{Moerdijk1995-MOEAMF}, Palmgren's solution is similar to the solution offered by $\fhyl$ in that the model of constructive nonstandard analysis he develops uses sheaf semantics rather than a constructive version of Tarskian semantics. As a consequence, the nonstandard extension of the reals that Palmgren obtains is a sheaf living in a Grothendieck topos, rather than a Tarskian model in the universe of sets. In what follows, I assume some elementary knowledge of category theory (objects and arrows in a category, functors between categories, natural transformations) and limit myself to a description of Grothendieck toposes and sheaf semantics that is precise enough to give the reader a sense of the relationship between Palmgren's model and $\fhyl$. A detailed exposition of topos theory is beyond the scope of this paper but can be found in \cite{maclane2012sheaves}.\\

Given a category $\cat{C}$, a \textit{sieve} $S$ on an object $A$ in $\cat{C}$ is a collection of arrows in $C$ with codomain $A$ and closed under precomposition, i.e., such that for any arrow $f: B \to A$ in $S$ and any arrow $g: C \to B$, their composition $f\circ g: C \to A$ is in $S$.
A Grothendieck topology $J$ on $\cat{C}$ assigns to every object $A$ in $\cat{C}$ a collection $J(A)$ of \textit{covering sieves} on $A$ in a ``compatible'' way. The pair $(\cat{C},J)$, where $J$ is a Grothendieck topology, is called a \textit{site}. A \textit{presheaf} on $\cat{C}$ is a contravariant functor $\alpha: \cat{C} \to \cat{Sets}$, meaning that $\alpha(A)$ is a set for any object $A$ in $\cat{C}$, and $\alpha(f)$ is a function from $\alpha(B)$ to $\alpha(A)$ whenever $f: A \to B$ is an arrow in $\cat{C}$. Given a site $(\cat{C},J)$, a presheaf $\alpha$ on $\cat{C}$ and a sieve $S = \{f_i : B_i \to A\}_{i \in I}$ covering some object $A$ in $\cat{C}$, a \textit{matching family} for $S$ is a family $\{a_i\}_{i \in I}$ such that $a_i \in \alpha(B_i)$ for every $i \in I$, and for any arrow in $\cat{C}$ $g:D\to B_j$ for some $j \in I$, $\alpha(g)(a_j)=a_k$, where $g \circ f_j = f_k$. Finally, a \textit{sheaf} on a site $(\cat{C},J)$ is a presheaf on $\cat{C}$ such that for any matching family $\{a_i\}_{i\in I}$ for some covering sieve $S = \{f_i : B_i \to A\}_{i \in I}$, there is a unique $a \in \alpha(A)$ such that $\alpha(f_i)(a) = a_i$ for any $f_i \in S$. The category of all sheaves on a site $(\cat{C},J)$ and natural transformations between them, usually written $\mathscr{S}(\cat{C},J)$, is a \textit{Grothendieck topos}.\\

Informally, one can think of a Grothendieck topos as a universe of \textit{varying sets}. Just like the category $\cat{Sets}$ of sets and functions between them, it has enough structure and closure properties to support a significant amount of set-theoretic reasoning. But unlike in $\cat{Sets}$ the objects in a Grothendieck topos $\mathscr{S}(\cat{C},J)$ are sheaves, i.e., contravariant functors from the base category $\cat{C}$ to the category of sets. This means that an object $\alpha$ in a Grothendieck topos may be thought of as a $\cat{C}$-indexed set, meaning that $\alpha$ designates a set $\alpha(A)$ that may vary along the objects in $\cat{C}$. Just like a Tarskian model is a set endowed with an interpretation for the non-logical vocabulary of some first-order language, one can turn sheaves in a Grothendieck topos into models of a first-order language by defining an interpretation for non-logical vocabulary and some recursive clauses for first-order formulas. Because a sheaf $\alpha$ is a varying set, the interpretation and recursive clauses must be defined on every possible value that $\alpha$ can take, and some requirements must be imposed in order to guarantee the validity of certain logical laws. Such a framework is precisely the one provided by sheaf semantics. Formally, given a sheaf $\alpha$ on a site $(\cat{C},J)$ and a first-order language $\la$, one defines first a forcing relation between a set $\alpha(A)$ for some object $A$ in $\cat{C}$, elements of $\alpha(A)$ and atomic $\la$-formulas that must satisfy the following two conditions:

\begin{description}
\item[Monotonicity] For any $n$-ary $\la$-relation symbol $R$, any $n$-tuple $\overline{a}$ of elements of $\alpha(A)$, and any $f:B \to A$, if $\alpha(A) \fr R(\overline{a})$, then $\alpha(B) \fr R(\alpha(f)(\overline{a}))$;
\item[Local Character] For any covering sieve $S = \{f_i: B_i \to A_i\}_{i \in I}$ and any $n$-tuple $\overline{a}$ of elements of $\alpha(A)$, if $\alpha(B_i) \fr R(\alpha(f_i)(\overline{a}))$ for every $i \in I$, then $\alpha(A) \fr R(\overline{a})$.
\end{description}

This forcing relation is then extended to first-order formulas by the following inductive clauses:

\begin{itemize}
    \item If $\phi := \neg \psi$, then $\alpha(A) \fr \phi(\overline{a})$ iff for all $f: B \to A$, $\alpha(B) \nfr \psi(\alpha(f)(\overline{a}))$;
    \item If $\phi := \psi \me \chi$, then $\alpha(A) \fr \phi$ iff $\alpha(A) \fr \psi$ and $\alpha(A) \fr \chi$;
    \item If $\phi := \psi \jo \chi$, then $\alpha(A) \fr \phi(\overline{a})$ iff there is a covering sieve $S = \{f_i : B_i \to A\}_{i \in I}$ of $A$ such that, for any $i \in I$, $\alpha(B_i) \fr \psi(\alpha(f_i)(\overline{a}))$ or $\alpha(B_i) \fr \chi(\alpha(f_i)(\overline{a}))$;
    \item If $\phi := \psi \to \chi$, then $p \fr \phi(\overline{a})$ iff for all $f:B \to A$, $\alpha(B) \fr \psi(\alpha(f)(\overline{a}))$ implies $\alpha(B) \fr \chi(\alpha(f)(\overline{a}))$;
    \item If $\phi := \forall x \psi$, then $A \fr \phi(\overline{a})$ iff for all $f:B \to A$, $\alpha(B) \fr \psi(\alpha(f)(\overline{a}),b)$ for every $b \in \alpha(B)$;
    \item If $\phi := \exists x \psi$, then $A \fr \phi(\overline{a})$ iff there is a covering sieve $S = \{f_i : B_i \to A\}_{i \in I}$ of $A$ such that, for any $i \in I$, $\alpha(B_i) \fr \psi(\alpha(f_i)(\overline{a}),b)$ for some $b \in \alpha(B_i)$.
\end{itemize}

As the reader probably noticed, these clauses are strikingly similar to the inductive clauses of possibility semantics as they were defined in \cref{semdef}. The next section will expand on the relationship between the two semantics, but, for now, we must note an important discrepancy between them. Indeed, these inductive clauses only guarantee that the axioms and rules of first-order intuitionistic logic are valid on any sheaf model, whereas the clauses of possibility semantics guarantee the validity of first-order classical logic on any possibility structure. This is the reason why the ``internal logic'' of Grothendieck toposes, described by its sheaf semantics, is always intuitionistic, but rarely classical, a feature that is particularly appealing to constructivists like Palmgren.\\

We are now in a position to describe Palmgren's constructive model of nonstandard analysis.\footnote{Palmgren's background theory in \cite{palmgren1998developments} is Martin-Löf's constructive type theory, but here I will work in $ZF+DC$ for the sake of simplicity.} This involves first describing the base category $\mathbb{B}$ of filter bases and its Grothendieck topology, and then identifying the sheaves on this site that play the role of the nonstandard extension of standard sets.

\begin{definition}
A \textit{filter base} $\mathscr{F}$ is a pair $(S_F, \{F_i\}_{i \in I})$ such that $S_F$ is a set and $\{F_i\}_{i \in I}$ is a collection of subsets of $S_F$ such that $\{U \sset S_F : F_i \sset U \text{ for some } i \in I\}$ is a filter on $S_F$. A \textit{continuous morphism} between two filter bases $\mathscr{F} = (S_F, \{F_i\}_{i \in I})$ and $\mathscr{G} = (S_G, \{G_j\}_{j \in J})$ is a map $\alpha: S_F \to S_G$ such that for any $j \in J$ there is $i \in I$ such that $F_i \sset \alpha\inv[G_j]$. Filter bases and continuous morphisms form a category $\mathbb{B}$.
\end{definition}

Note that, by its definition, $\mathbb{B}$ is a large category, since any set induces a distinct set of filter bases in $\mathbb{B}$, which implies that the collection of objects in $\mathbb{B}$ is a proper class. It is however easy to see that $\mathbb{B}$ is locally small, meaning that there are only set many arrows betwen any two objects. This implies in particular that the contravariant representable functors $\mathbf{y}_A$, i.e., functors mapping any object $B$ to the collection of all arrows in $\mathbb{B}$ from $B$ to some fixed object $A$, are set-valued and therefore presheaves. Defining sheaves requires the notion of a cover of a filter base:

\begin{definition}
For any filter base $\mathscr{F} = (S_F, \{F_i\}_{i \in I})$, a \textit{cover} of $\mathscr{F}$ is a finite set of maps $\{\alpha_k\}_{k \leq n}$ with $\alpha_k : (S_{G_k}, \{G_j\}_{j \in J_k}) \to \mathscr{F}$ for each $k \leq n$, and such that for any $G_{j_1},...,G_{j_n}$ with $G_{j_k} \in \{G_j\}_{j \in J_k}$ for each $k \leq n$, there is $i \in I$ such that $F_i \sset \bigcup_{k \leq n} \alpha_k[G_{j_k}]$.
\end{definition}

Palmgren goes on to prove that this notion of covering generates a well-behaved Grothendieck topology on $\mathbb{B}$.

\begin{theorem}\label{subc}
The covers on filter bases in $\mathbb{B}$ form a subcanonical Grothendieck topology $\mathscr{J}$ on $\mathbb{B}$. As a consequence, every representable presheaf on $\mathbb{B}$ is a sheaf.
\end{theorem}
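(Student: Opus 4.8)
The plan is to present $\mathscr{J}$ as the Grothendieck topology \emph{generated by a basis}: take the covers of $\mathscr{F}$ defined above to be the basic covering families, and let $\mathscr{J}(\mathscr{F})$ consist of all sieves on $\mathscr{F}$ containing the sieve generated by some basic cover. By the standard correspondence between bases (pretopologies) and Grothendieck topologies, it then suffices to verify the three pretopology axioms: (i) every isomorphism $\alpha\colon\mathscr{F}'\to\mathscr{F}$ of $\mathbb{B}$, as a singleton family, is a basic cover of $\mathscr{F}$; (ii) basic covers compose; (iii) basic covers are stable under pullback along arbitrary morphisms. Axiom (i) is immediate, since an isomorphism of $\mathbb{B}$ is a bijection whose inverse is continuous, and continuity of $\alpha\inv\colon\mathscr{F}\to\mathscr{F}'$ says exactly that every basic set of $\mathscr{F}'$ has some basic set of $\mathscr{F}$ below its $\alpha$-image, which is the singleton covering inequality. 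Axiom (ii) is a finite combinatorial unwinding: given a basic cover $\{\alpha_k\colon\mathscr{G}_k\to\mathscr{F}\}_{k\le n}$, basic covers $\{\beta_{k\ell}\colon\mathscr{H}_{k\ell}\to\mathscr{G}_k\}_{\ell\le m_k}$, and a choice of basic sets $H^{k\ell}$, one first uses the covering condition of each $\{\beta_{k\ell}\}_\ell$ inside $\mathscr{G}_k$ to get basic $G^k\sset\bigcup_\ell\beta_{k\ell}[H^{k\ell}]$, then the covering condition of $\{\alpha_k\}_k$ applied to the $G^k$ to get a basic $F_i\sset\bigcup_k\alpha_k[G^k]\sset\bigcup_{k,\ell}(\alpha_k\circ\beta_{k\ell})[H^{k\ell}]$; that both index sets are finite is what makes this work.

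The substantive step is axiom (iii), which forces us first to construct the relevant pullbacks in $\mathbb{B}$. Given $h\colon\mathscr{H}\to\mathscr{F}$ and a member $\alpha\colon\mathscr{G}\to\mathscr{F}$ of a basic cover, I would take $\mathscr{H}\times_{\mathscr{F}}\mathscr{G}$ to have underlying set the fibre product $\{(x,y)\in S_H\times S_G:h(x)=\alpha(y)\}$ and filter base generated by the rectangles $(H_\ell\times G_j)$ intersected with the fibre product. One must check that this collection is downward directed (because the filter bases of $\mathscr{H}$ and $\mathscr{G}$ are) and generates a proper filter; the projections are then continuous by construction and satisfy the universal property. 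For stability one shows $\{\pi_k\colon\mathscr{H}\times_{\mathscr{F}}\mathscr{G}_k\to\mathscr{H}\}_k$ is again a basic cover of $\mathscr{H}$: since $\pi_k$ sends the rectangle over $(H_{\ell_k},G_{j_k})$ onto $H_{\ell_k}\cap h\inv[\alpha_k[G_{j_k}]]$, one picks a basic set $H$ of $\mathscr{H}$ contained in $\bigcap_k H_{\ell_k}$ and in $h\inv[F_i]$ for an $F_i\sset\bigcup_k\alpha_k[G_{j_k}]$ supplied by the covering condition of $\{\alpha_k\}$ at $\mathscr{F}$, and notes that every $x\in H$ satisfies $h(x)\in\alpha_k[G_{j_k}]$ for some $k$, whence $x$ lies in the $\pi_k$-image of the rectangle over $(H_{\ell_k},G_{j_k})$.

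For subcanonicity, by the same basis-versus-topology reduction it is enough to verify that each representable $\mathbb{B}(-,\mathscr{A})$ satisfies the sheaf condition for a basic cover $\{\alpha_k\colon\mathscr{G}_k\to\mathscr{F}\}_{k\le n}$. A matching family then amounts to a tuple of continuous morphisms $g_k\colon\mathscr{G}_k\to\mathscr{A}$ that agree after restriction along the two legs of each pullback $\mathscr{G}_k\times_{\mathscr{F}}\mathscr{G}_{k'}$; this compatibility says precisely that the partial rule $\alpha_k(y)\mapsto g_k(y)$ is single-valued on $\bigcup_k\mathrm{im}(\alpha_k)\sset S_F$, and I would extend it to a total map $g\colon S_F\to S_A$ arbitrarily off that set. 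Continuity of $g$ follows by the familiar pattern: for a basic set $A_p$ of $\mathscr{A}$, continuity of each $g_k$ gives basic $G_{j_k}\sset g_k\inv[A_p]$, so $\alpha_k[G_{j_k}]\sset g\inv[A_p]$, and the covering condition yields a basic $F_i\sset\bigcup_k\alpha_k[G_{j_k}]\sset g\inv[A_p]$. By construction $g\circ\alpha_k=g_k$, so $g$ amalgamates the family, and any two amalgamations agree on $\bigcup_k\mathrm{im}(\alpha_k)$, hence on a filter-large subset of $S_F$, so they determine the same morphism of $\mathbb{B}$, giving uniqueness.

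I expect the main obstacle to be the pullback step of axiom (iii): checking that the rectangle filter base on the fibre product is a genuine, proper filter base and that the projections of a pulled-back cover still satisfy the finite covering inequality --- here one really has to use that $\alpha$ belongs to a cover, not merely that it is a morphism, to control nonemptiness. Once pullbacks and their covering behaviour are established, axioms (i), (ii) and the subcanonicity argument are routine manipulations of the definitions of filter base, continuous morphism and cover.
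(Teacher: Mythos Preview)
The paper does not give its own proof of this theorem: it is stated as a result of Palmgren and left unproved. There is therefore nothing in the paper to compare your argument against, and your outline is in fact the standard route (and essentially Palmgren's own): exhibit the covers as a pretopology by checking identity, composition and base change, and then verify the sheaf condition for representables directly on basic covers.

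Two points are worth flagging. First, your uniqueness argument for amalgamations relies on the claim that two continuous morphisms $g,g'\colon\mathscr{F}\to\mathscr{A}$ which agree on a set containing some $F_i$ are \emph{equal} in $\mathbb{B}$. With the definition of morphism literally as stated in the paper (a function $S_F\to S_A$), this is false: nothing identifies maps that agree on a large set. In Palmgren's actual construction, morphisms are \emph{germs} of continuous maps, i.e., $\alpha\sim\beta$ iff $\alpha\!\upharpoonright\! F_i=\beta\!\upharpoonright\! F_i$ for some $i$, and it is only under that identification that your uniqueness step goes through. You should make this explicit; it is a simplification in the paper's exposition, not a flaw in your strategy.

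Second, you are right to single out the pullback step as the delicate one, but the difficulty is sharper than you indicate. For a single leg $\alpha_k$ of a cover and an arbitrary $h\colon\mathscr{H}\to\mathscr{F}$, the set-theoretic fibre product can be empty (or the rectangles can have empty trace on it), so it need not carry a proper filter base in the sense defined. Palmgren handles this either by allowing an initial object in $\mathbb{B}$ or, equivalently, by working at the level of sieves rather than demanding that each individual pullback be a nontrivial filter base; what the covering inequality guarantees is only that the \emph{family} of pulled-back legs jointly covers $\mathscr{H}$, not that each leg is individually nontrivial. Your verification that $\{\pi_k\}$ covers $\mathscr{H}$ is correct in spirit, but you should not assert that each $\mathscr{H}\times_{\mathscr{F}}\mathscr{G}_k$ is a filter base without addressing this.
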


Given a set $A$, Palmgren considers the filter base $\mathscr{A} = (A, \{A\})$ and defines the nonstandard extension of $A$ as the representable presheaf $\mathbf{y}_\mathscr{A}$. More concretely, for a filter base $\mathscr{F} = \{S_F, \{F_i\}_{i \in I}\}$, $\mathbf{y}_\mathscr{A}(\mathscr{F})$ is the reduced power of $A$ modulo the filter on $S_F$ generated by $\{F_i\}_{i \in I}$. Since any representable presheaf is a sheaf by \cref{subc}, the Grothendieck topos $\mathscr{Sh}(\mathbb{B},\mathscr{J})$ provides a model for a constructive version of Nelson's Internal Set Theory \cite{nelson1977internal}, in which any standard set has a nonstandard extension, and versions of the Transfer and Saturation Principles hold. Palmgren also proves constructively that analogues of some classical theorems of analysis such as the Heine-Borel Theorem hold in this model. Palmgren's framework has therefore the advantage of being entirely constructive, but quite powerful, which he takes to be an advantage of constructive nonstandard analysis over some other frameworks like Bishop's constructive analysis. From a semi-constructive perspective however, the choice of a Grothendieck topos as a model of nonstandard analysis comes at quite a significant cost, as the logic of such a topos is intuitionistic, but not classical, and sheaf semantics is significantly more involved than Tarskian semantics. One way of overcoming both problems is to modify the base category $\mathbb{B}$ so that presheaves on that category become easier to describe, and the recursive clauses of sheaf semantics become tractable enough to guarantee the validity of classical logic. As we shall see, this can be achieved by choosing $\frec$ as our base category, and this construction will yield that $\fhyl$ is essentially a presheaf on $\frec$, but not a sheaf.

\subsection{Presheaves on the Fréchet Category} \label{sheaf}

The first step in our simplification of Palmgren's approach is to turn the poset $\frec$ into a site. Equivalently, this amounts to \textit{decategorifying} Palmgren's base category $\mathbb{B}$, by replacing it with a poset, which are typically more concrete and tractable objects.

\begin{definition}
The Fréchet Category is the poset category $\frec$, whose objects are all proper filters on $\omega$ extending the Fréchet filter of cofinite subsets, and for any two such filters $F,F'$, there is a unique arrow $F' \to F$ iff $F \sset F'$. Two filters $F, F' \in \frec$ are \textit{incompatible} (noted $F \bot F'$) if there is no $H \in \frec$ such that $H \supseteq F, F'$.
\end{definition}

\begin{definition}
A collection of filters $\{H_i\}_{i \in I}$ \textit{covers} a filter $F \in \frec$ if it satisfies the following three properties:
\begin{itemize}
    \item For any $i \in I$, $H_i \supseteq F$;
    \item For any $i \in I$ and $H \in \frec$, if $H \supseteq H_i$, then $H = H_j$ for some $j \in J$;
    \item For any $G \supseteq F$, there is $i \in I$ such that $H_i \supseteq G$.
\end{itemize}
For any filter $F \in \frec$, we let $\mathscr{C}(F)$ be the collection of all covers on $F$.
\end{definition}

Note that it follows straightforwardly from the definition of a cover that
for any $F \in \frec$, a collection of filters $\{H_i\}_{i \in I}$ covers $F$ if and only if it is a dense open subset of the downset of $F$ in the downset topology on $\frec$. As a consequence, we have the following result:

\begin{lemma}
The covers $\{\mathscr{C}(F)\}_{F \in \frec}$ form a Grothendieck topology on $\frec$.
\end{lemma}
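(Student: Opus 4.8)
The plan is to verify the three axioms of a Grothendieck topology directly from the definition of a cover on $\frec$, exploiting the observation recorded just before the lemma statement: a collection $\{H_i\}_{i \in I}$ covers $F$ if and only if it is a dense open subset of the downset $\downset F$ in the downset topology on $\frec$ (where "open" means downward closed under $\supseteq$, i.e., closed under passing to extensions, and "dense" means every $G \supseteq F$ has some extension in the collection). Working with this reformulation turns all three axioms into routine topological facts about dense open subsets, so I would lead with it and then treat the axioms in turn.

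\begin{proof}
As noted above, for any $F \in \frec$ a collection $\{H_i\}_{i\in I}$ covers $F$ if and only if it is a dense open subset of $\downset F$ in the downset topology on $\frec$. We verify the three conditions for a Grothendieck topology using this characterization.

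\emph{Maximality.} The collection of \emph{all} $H \in \frec$ with $H \supseteq F$ is the whole of $\downset F$, which is trivially a dense open subset of itself; hence the maximal sieve covers $F$.

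\emph{Stability under pullback.} Suppose $\{H_i\}_{i\in I}$ covers $F$ and $G \supseteq F$. The pullback of this cover along the unique arrow $G \to F$ is, concretely, the collection of all $K \in \frec$ with $K \supseteq G$ such that $K \supseteq H_i$ for some $i \in I$; equivalently, it is $\{H_i\}_{i\in I} \cap \downset G$ together with all further extensions (which are already present by openness). This set is open in $\downset G$ as an intersection of two open sets. For density, take any $K \supseteq G$; since $K \supseteq F$ and $\{H_i\}_{i\in I}$ is dense in $\downset F$, there is $i \in I$ with $H_i \supseteq K$, and $H_i$ then lies in $\downset G$ and in the pullback. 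So the pullback covers $G$.

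\emph{Transitivity (local character).} Suppose $\{H_i\}_{i\in I}$ covers $F$, and for each $i \in I$ we have a collection $\mathcal{K}_i$ covering $H_i$. Let $\mathcal{K} = \bigcup_{i \in I} \mathcal{K}_i$. Each $\mathcal{K}_i$ is dense open in $\downset H_i \subseteq \downset F$, so $\mathcal{K}$ is open in $\downset F$ as a union of open sets. For density, take $G \supseteq F$. By density of $\{H_i\}_{i\in I}$ there is $i \in I$ with $H_i \supseteq G$; then $G \in \downset H_i$, and by density of $\mathcal{K}_i$ in $\downset H_i$ there is $K \in \mathcal{K}_i$ with $K \supseteq G$. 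Thus $K \in \mathcal{K}$ witnesses density, and $\mathcal{K}$ covers $F$.

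Hence $\{\mathscr{C}(F)\}_{F \in \frec}$ is a Grothendieck topology on $\frec$.
\end{proof}

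**The main obstacle** is not any single step — each axiom reduces to a one-line topological observation — but rather bookkeeping: one must be careful that the middle clause in the definition of a cover (closure of $\{H_i\}$ under taking extensions) really is exactly "openness in the downset topology," and that the pullback sieve in the poset case is correctly identified, since in a poset all the sieve-theoretic data collapses to a downward-closed subset. I would also want to double-check that "proper filter extending the Fréchet filter" is closed under the relevant operations so that the witnesses $H_i$, $K$ produced by density actually lie in $\frec$; but since extending a non-principal filter keeps it non-principal (and keeps it a filter), and since $\frec$ is explicitly closed under supersets, this is immediate and need not be belabored in the proof.
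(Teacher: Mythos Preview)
Your approach is exactly the right one and is precisely how one proves the general fact the paper invokes: the paper itself gives no proof here, merely citing Mac Lane--Moerdijk for the statement that the dense topology on any poset category is a Grothendieck topology. Your maximality and stability arguments are correct.

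There is, however, a slip in your transitivity step. After density of $\{H_i\}$ produces some $H_i \supseteq G$, you write ``then $G \in \downset H_i$.'' This reverses the order: since $\downset H_i = \{K \in \frec : K \supseteq H_i\}$, the relation $H_i \supseteq G$ places $H_i$ in $\downset G$, not $G$ in $\downset H_i$. You therefore cannot apply density of $\mathcal{K}_i$ in $\downset H_i$ at the point $G$. The repair is immediate: apply that density at $H_i$ itself (trivially $H_i \in \downset H_i$) to obtain $K \in \mathcal{K}_i$ with $K \supseteq H_i \supseteq G$, which is the witness you need.

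A smaller remark: what you state and prove is the \emph{composition} form of transitivity (covers of the members of a cover assemble into a cover), which is the pretopology axiom. The sieve-theoretic axiom instead fixes a sieve $R$ on $F$ whose pullback to every $H$ in a cover $S$ covers $H$, and asks that $R$ cover $F$. For the dense topology the two forms are interchangeable, since any sieve containing a dense sieve is itself dense; but if you want to match the sieve axioms literally, rephrase the hypothesis with a fixed $R$ and run the same density chase (now using $\mathcal{K}_H = R \cap \downset H$).
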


This result is a special case of a well-known result in topos theory, namely that the dense topology on any poset category is a Grothendieck topology \cite[Section III.2]{maclane2012sheaves}.

Let us now fix an $\la$-structure $\mathscr{A}$ with domain $A$.

\begin{definition} The nonstandard extension of $\mathscr{A}$ is the presheaf $\NSA : \frec \to \cat{Sets}$ defined as follows:
\begin{itemize}
    \item Given $F \in \frec$, $\NSA(F)$ is the reduced power of $A$ modulo $F$, that is, the set of equivalence classes of functions $a : \omega \to A$ under the equivalence relation: $$ a \sim_F a' \Leftrightarrow \{i \in \omega: a(i) = a'(i)\} \in F.$$
    \item Given $f: F' \supseteq F \in \frec$, $\NSA(f) : \NSA(F) \to \NSA(F')$ is defined by $\NSA(f)(a_F) = a_{F'}$, where $a_F$ and $a_{F'}$ are the equivalence classes of the function $a : \omega \to A$ under $\sim_F$ and $\sim_{F'}$ respectively.
\end{itemize}
\end{definition}

Note that if $F \sset F'$, then $a \sim_F a'$ implies $a \sim_{F'} a'$, and hence the map $\NSA(f) : \NSA(F) \to \NSA(F')$ is well-defined. In order to turn $\NSA$ into a structure for some language $\la$, we need to define a forcing relation for atomic formulas. By analogy with Palmgren's model, the natural choice here is to use classical satisfaction in the reduced powers $\NSA(F)$ for any $F \in \frec$. Using the notation introduced in \cref{not2}, for any $F \in \frec$, any $n$-ary $\la$-relation symbol $R$ and any $n$-tuple $\overline{a}$, we let $\NSA(F) \models R(\overline{a_F})$ iff $||R(\overline{a})|| \in F$. We can then show that this relation satisfies the monotonicity and local character condition of sheaf semantics:

\begin{lemma}
The forcing relation $\Vdash$ defined as $\NSA(F) \Vdash R(\overline{a_F})$ iff $||R(\overline{a})|| \in F$ for any $F \in \frec$, any $n$-ary $\mathscr{L}$-relation symbol $R$ and any $n$-tuple $\overline{a}$ of elements of $A^\omega$ satisfies the monotonicity and local character conditions of sheaf semantics, namely:
\begin{enumerate}
    \item Monotonicity: For any $f: F' \supseteq F$, $\NSA(F) \Vdash R(\overline{a_F})$ implies $\NSA(F') \Vdash R(\NSA(f)(\overline{a_F}))$;
    \item Local Character: For any cover $\{f_i:F_i \supseteq F\}_{i \in I}$ of some $F \in \frec$, if $\NSA(F_i) \Vdash R(\NSA(f_i)(\overline{a_F}))$ for all $i \in I$, then $\NSA(F) \Vdash R(\overline{a_F})$.
\end{enumerate}

\begin{proof}
First, let us recall that for any morphism $f : F' \supseteq F$ in $\frec$ and any $a \in A^\omega$, $\NSA(f)(a_F) = a_{F'}$. Now we prove that the forcing relation satisfies monotonicity and local character in turn:
\begin{enumerate}
    \item Suppose $f : F' \supseteq F$ and $\NSA(F) \Vdash R(\overline{a_F})$. This means that $||R(\overline{a})|| \in F \sset F'$, so $\NSA(F') \Vdash R(\overline{a_{F'}})$.
    \item Let $\{f_i:F_i \supseteq F\}_{i \in I}$ be a cover of $F$, and suppose that $\NSA(F) \nVdash R(\overline{a_F})$. It is enough to show that there is some $i \in I$ such that $\NSA(F_i) \nVdash R(\overline{a_{F_i}})$. By definition of the forcing relation, $||R(\overline{a})|| \notin F$. Let $H$ be the filter generated by $F \cup -||R(\overline{a})||$. Since $\{f_i\}_{i \in I}$ covers $F$, there is $i \in I$ such that $F_i \supseteq H$. But then it follows that $||R(\overline{a})|| \notin F_i$, so $\NSA(F_i) \nVdash R(\overline{a_{F_i}})$. \qedhere
\end{enumerate}
\end{proof}
\end{lemma}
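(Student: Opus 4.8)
The plan is to check the two conditions separately; monotonicity is a one-line unwinding of the definitions, and essentially all of the (modest) content lies in local character.

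For monotonicity, I would recall that an arrow $f : F' \supseteq F$ in $\frec$ is exactly an inclusion $F \subseteq F'$, and that the transition map is $\NSA(f)(\overline{a_F}) = \overline{a_{F'}}$. So if $\NSA(F) \Vdash R(\overline{a_F})$, i.e., $||R(\overline{a})|| \in F$, then $||R(\overline{a})|| \in F'$ since filters are closed under supersets, and this is precisely $\NSA(F') \Vdash R(\NSA(f)(\overline{a_F}))$. Nothing beyond upward closure of filters is needed.

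For local character I would argue by contraposition. Suppose $\{f_i : F_i \supseteq F\}_{i \in I}$ covers $F$ and $\NSA(F) \nVdash R(\overline{a_F})$, i.e., $||R(\overline{a})|| \notin F$; it then suffices to produce some $i \in I$ with $\NSA(F_i) \nVdash R(\overline{a_{F_i}})$. The key step is the observation that, since $||R(\overline{a})|| \notin F$, every member of $F$ must meet the complement $\omega \setminus ||R(\overline{a})||$ --- otherwise some $B \in F$ would be contained in $||R(\overline{a})||$, forcing $||R(\overline{a})|| \in F$ --- so the family $F \cup \{\omega \setminus ||R(\overline{a})||\}$ has the finite intersection property (finite intersections reduce, since $F$ is closed under finite intersections, to the single case just handled) and generates a proper filter $H \supseteq F$. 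Since $H$ still contains the Fréchet filter, $H \in \frec$, and I may apply the density clause in the definition of a cover to $H$ to obtain some $i \in I$ with $F_i \supseteq H$. Then $\omega \setminus ||R(\overline{a})|| \in F_i$, so properness of $F_i$ gives $||R(\overline{a})|| \notin F_i$, i.e., $\NSA(F_i) \nVdash R(\overline{a_{F_i}})$, which is $\NSA(F_i) \nVdash R(\NSA(f_i)(\overline{a_F}))$ as $\NSA(f_i)(\overline{a_F}) = \overline{a_{F_i}}$.

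The only point requiring any care --- and hence the step I would flag --- is checking that the extended filter $H$ really lands back inside $\frec$, that is, that it is at once proper and non-principal, so that the cover's density condition can legitimately be invoked for it; everything else is immediate from the filter axioms. I would also note that this is, up to a change of vocabulary, the same argument that establishes Refinability in \cref{lma1}, recast in the language of covers, and that --- in contrast with the existential case of \cref{los} --- it requires no fragment of the Axiom of Choice.
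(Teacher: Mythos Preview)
Your proposal is correct and follows essentially the same approach as the paper: monotonicity by inclusion of filters, and local character by contraposition, generating the filter $H$ from $F \cup \{\omega \setminus ||R(\overline{a})||\}$ and invoking the density clause of the cover. Your version is in fact slightly more careful than the paper's, since you explicitly verify that $H$ is proper and lies in $\frec$, and your observation that this is the Refinability argument from \cref{lma1} rephrased in the language of covers is apt.
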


We can then extend this forcing relation to any first-order formula $\phi$ using the inductive clauses of sheaf semantics. If $\mathscr{A} = \RR$, we therefore obtain a presheaf $\,^*\!\mathcal{R}$ which can be thought of as a ``varying reduced power''. As a matter of fact, we obtain a structure that is equivalent to $\fhyl$ in the following sense:

\begin{theorem}\label{sheafthm}
There is a system of functions ${\pi_F}_{F \in \frec}$ such that for any $F,G \in \frec$:
\begin{itemize}
    \item $\pi_F:\RR^\omega \to \,^*\!\mathcal{R}(F)$;
    \item if $f : G \rset F$, then $\,^*\!\mathcal{R}(f)\circ \pi_F = \pi_G$;
    \item for any $\la$-formula $\phi(x)$ and any tuple $\overline{a}$ of elements of $\RR^\omega$, $\,^*\!\mathcal{R}(F) \fr \phi(\pi_F(\overline{a}))$ iff $\fhyl, F \fr \phi(\overline{a})$.
\end{itemize}
\end{theorem}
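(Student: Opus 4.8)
The plan is to take for $\pi_F$ the obvious quotient map. For each $F \in \frec$, define $\pi_F : \RR^\omega \to \,^*\!\mathcal{R}(F)$ by $\pi_F(a) = a_F$, the $\sim_F$-equivalence class of $a : \omega \to \RR$. Since $\,^*\!\mathcal{R}(F)$ is by definition the reduced power $\RR^\omega/{\sim_F}$, this $\pi_F$ is a well-defined surjection, which gives the first bullet. For the second, if $f : G \rset F$ then, unwinding the definition of $\,^*\!\mathcal{R}$ on morphisms, $\,^*\!\mathcal{R}(f)(a_F) = a_G = \pi_G(a)$ for every $a \in \RR^\omega$, so $\,^*\!\mathcal{R}(f) \circ \pi_F = \pi_G$. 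All the content is in the third bullet.

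For that, I would prove a \L\'os-type theorem for the sheaf-semantic forcing relation carried by $\,^*\!\mathcal{R}$: for every $\la$-formula $\phi(\overline{x})$, every $F \in \frec$ and every tuple $\overline{a}$ of elements of $\RR^\omega$,
\[
\,^*\!\mathcal{R}(F) \fr \phi(\pi_F(\overline{a})) \iff ||\phi(\overline{a})|| \in F .
\]
Granting this, the third bullet is immediate by combining it with \cref{los}, which gives $\fhyl, F \fr \phi(\overline{a})$ iff $||\phi(\overline{a})|| \in F$. The displayed equivalence is proved by induction on $\phi$, and the point is that it is essentially the same induction as in the proof of \cref{los}. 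The atomic case is the definition of the forcing relation on $\,^*\!\mathcal{R}$. The sheaf-semantic clauses for $\neg$, $\me$, $\to$ and $\forall$ range over morphisms $f : G \rset F$ rather than over covering sieves, so --- using that every filter in $\frec$ extends the Fr\'echet filter, which is exactly what makes the refinability step of \cref{lma1} available --- these cases are word-for-word the same as the corresponding ones in \cref{los}.

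The cases that genuinely differ, and where I expect the only real difficulty, are disjunction and existential quantification, where sheaf semantics quantifies over covering sieves of $F$; since a covering sieve of $F$ is nothing but a dense open subset of the downset of $F$ in the downset topology on $\frec$, the crux is the following combinatorial lemma: for $X, Y \sset \omega$, there is a cover $\{H_i\}_{i \in I}$ of $F$ with $X \in H_i$ or $Y \in H_i$ for every $i \in I$ if and only if $X \cup Y \in F$. The left-to-right direction is a short refinability argument: were $X \cup Y \notin F$, some $G \rset F$ would contain $\omega \setminus (X \cup Y)$, density of the cover would yield some $H_i \rset G$, and then $X \cup Y \notin H_i$ would contradict $X \in H_i$ or $Y \in H_i$. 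For the right-to-left direction, given $X \cup Y \in F$ and an arbitrary $G \rset F$, I would show that one of $G \cup \{X\}$, $G \cup \{Y\}$ generates a proper non-principal filter $H \rset G$: if $X \cap Z$ is infinite for all $Z \in G$, take the former; otherwise $X \cap Z_0$ is finite for some $Z_0 \in G$, and since $G$ is non-principal with $X \cup Y \in G$, for every $Z \in G$ the set $Z \cap Z_0 \cap (X \cup Y)$ is infinite while $Z \cap Z_0 \cap X$ is finite, so $Z \cap Y$ is infinite and the latter works. Feeding this lemma, with $X = ||\psi(\overline{a})||$ and $Y = ||\chi(\overline{a})||$ and $||\psi \jo \chi|| = ||\psi|| \cup ||\chi||$, into the induction hypothesis settles disjunction; the existential case goes the same way, with witnesses chosen along the cover and the converse direction invoking the Axiom of Countable Choices to assemble a single $b \in \RR^\omega$ with $||\psi(\overline{a},b)|| \rset ||\exists x\, \psi(\overline{a},x)||$, exactly as in \cref{los}. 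The main obstacle is thus just this lemma linking covers of $F$ to the membership of unions in $F$; everything else is bookkeeping inherited from \L\'os's theorem for $\fhyl$.
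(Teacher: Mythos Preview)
Your proposal is correct and follows essentially the same route as the paper: the quotient maps $\pi_F(a)=a_F$ handle the first two bullets, and the third is an induction on formulas hinging on the fact that covers of $F$ are exactly the dense open subsets of the downset of $F$. The paper's proof is terse and phrases the induction as a direct comparison of the two forcing relations, whereas you factor through the intermediate condition $||\phi(\overline{a})||\in F$ and then invoke \cref{los}; this is a harmless organisational choice, and your explicit combinatorial lemma linking covers to $X\cup Y\in F$ is precisely the content the paper leaves implicit in its one-line remark about covers coinciding with dense sets.
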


\begin{proof}
For any $F \in \frec$, let $\pi_F$ map any $a \in \RR^\omega$ to its equivalence class $a_F$ in the reduced power $^*\!\mathcal{R}(F)$. The rest of the proof is a straightforward induction on the complexity of formulas which exploits the similarity between possibility semantics and sheaf semantics. In particular, in the atomic case, the persistence and refinability conditions correspond to the monotonicity and local character conditions of sheaf semantics respectively, and in the inductive step for disjunctions and existential quantification, one uses the fact that covers on a filter $F$ coincide with dense sets of filters extending $F$.
\end{proof}

 \cref{sheafthm} highlights the similarities between Palmgren's approach and ours. His nonstandard extensions are varying reduced powers of a classical model modulo a proper class of filters in which satisfaction is defined according to sheaf semantics, while we may think of $\fhyl$ as a ``dynamic'' reduced power of $\RR$ modulo the non-principal filters on $\omega$, in which satisfaction is defined locally and amalgamated in a coherent way by possibility semantics. However, taking the Grothendieck topos of sheaves over the site $(\frec, \{\mathscr{C}(F)\}_{F \in \frec})$ does not yield a nonstandard universe of sets, as many presheaves, including the presheaf corresponding to $\fhyl$, fail to be sheaves.

\begin{fact}
Let $\mathscr{A}$ be an $\mathscr{L}$-structure with a domain $A$ such that there is a injection $\pi:2^\omega \to A$. Then $\NSA$ is not a sheaf.
\end{fact}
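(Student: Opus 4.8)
The plan is to exhibit one covering sieve of the Fréchet filter $F_0$ together with a matching family of equivalence classes admitting no amalgamation in $\NSA(F_0)$, so that the gluing axiom fails and $\NSA$ cannot be a sheaf. (Note that $\NSA$ \emph{is} separated: if $a \sim_H a'$ at every $H$ in a cover of $F$, then $\{i : a(i) = a'(i)\} \in F$, since otherwise $F \cup (\omega \setminus \{i : a(i) = a'(i)\})$ generates a filter in $\frec$ below $F$ keeping $a$ and $a'$ apart; so the obstruction lies purely in the gluing clause.)

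First I would fix an independent family $\{E_n\}_{n \in \omega}$ of subsets of $\omega$, which exists in $ZF$: every finite cell $E(s) := \bigcap_{n < |s|,\, s(n)=1} E_n \cap \bigcap_{n < |s|,\, s(n)=0}(\omega \setminus E_n)$, for $s$ a finite binary string, is infinite. For $x \in 2^\omega$ let $G_x$ be the filter generated by $F_0$ together with $E_n$ whenever $x(n) = 1$ and $\omega \setminus E_n$ whenever $x(n) = 0$. Since each finite intersection of its generators contains an infinite cell, $G_x$ is a proper filter properly extending $F_0$, hence $G_x \in \frec$, and for $x \neq y$ the filters $G_x$ and $G_y$ are incompatible in $\frec$ (a common extension would have to contain both $E_n$ and $\omega \setminus E_n$ for the least $n$ on which $x$ and $y$ differ). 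Let $\C = \{H \in \frec : H \rset G_x \text{ for some } x \in 2^\omega\}$; this satisfies the downward-closure condition of a sieve on $F_0$, and $F_0 \notin \C$.

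The crucial step is to verify that $\C$ is a covering sieve of $F_0$, i.e.\ that $\{H : H \rset G_x\}$ is dense below $F_0$. Given $G \rset F_0$, I must produce $x \in 2^\omega$ with $G \cup G_x$ generating a proper filter. For $Y \in G$, put $K_Y = \{x \in 2^\omega : Y \cap E(x \restriction k) \text{ is infinite for all } k \in \omega\}$. Each $K_Y$ is closed (at level $k$ it depends only on $x \restriction (k+1)$), non-empty (build $x$ step by step, at stage $k$ choosing the half of the infinite set $Y \cap E(x \restriction k)$ that remains infinite, using $DC$), and the family $\{K_Y\}_{Y \in G}$ has the finite intersection property because $K_{Y \cap Y'} \sset K_Y \cap K_{Y'}$ and $Y \cap Y' \in G$. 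By compactness of $2^\omega$ pick $x \in \bigcap_{Y \in G} K_Y$; then every finite intersection of elements of $G$ with a cofinite set and a cell $E(x \restriction k)$ is infinite, so $G \cup G_x$ generates a proper filter, which extends $F_0$ (hence is non-principal) and extends $G$, establishing density. This density argument is the main obstacle: one must be sure the branch filters generate a dense sieve, and it is precisely the compactness of $2^\omega$, made available by the independence of the $E_n$ (which forces the $K_Y$ to be closed and non-empty), that delivers it; everything else is routine.

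To finish, I would define the matching family for $\C$: each $H \in \C$ extends $G_x$ for a \emph{unique} $x$ (by incompatibility of the $G_x$), so let $a_H \in \NSA(H)$ be the $\sim_H$-class of the constant function $\omega \to A$ with value $\pi(x)$; the same constant function represents $a_H$ at every $H \rset G_x$, so this is a genuine matching family. If $\NSA$ were a sheaf this would amalgamate to a class $[g]_{F_0} \in \NSA(F_0)$ with $g : \omega \to A$, whence $\{i \in \omega : g(i) = \pi(x)\} \in G_x$ for every $x \in 2^\omega$; each such set is non-empty since $G_x$ is non-principal, but the sets $\{i : g(i) = \pi(x)\}$ are pairwise disjoint (as $\pi$ is injective), and $\omega$ admits only countably many pairwise disjoint non-empty subsets, contradicting $|2^\omega| > \aleph_0$. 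Hence $\NSA$ is not a sheaf.
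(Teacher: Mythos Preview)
Your proof is correct and takes a genuinely different route from the paper's.

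Both arguments exhibit a covering sieve on the Fr\'echet filter $F_0$ together with a matching family of constant functions indexed by $2^\omega$, and derive the failure of amalgamation from the pigeonhole observation that $\omega$ cannot support continuum many pairwise disjoint non-empty subsets. The difference lies in how the covering sieve is built. The paper uses an \emph{almost disjoint} family $\{A_f\}_{f\in 2^\omega}$ to obtain pairwise incompatible filters $F_f$, but since there may be filters $H\supseteq F_0$ incompatible with every $F_f$, it simply adjoins all such $H$ to the sieve (assigning them an arbitrary placeholder element $\alpha$); density then holds by fiat, and the matching condition is vacuous across the two components because of incompatibility. You instead use a countable \emph{independent} family $\{E_n\}$ to produce branch filters $G_x$, and prove directly that every $G\in\frec$ is compatible with some $G_x$ via the compactness of $2^\omega$ applied to the closed sets $K_Y$. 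This yields the cleaner sieve $\{H : H\supseteq G_x \text{ for some } x\}$ and a uniform matching family consisting solely of constants, at the price of the compactness argument for density.

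Two minor remarks. First, your appeal to $DC$ when constructing a point of $K_Y$ is unnecessary: at stage $k$ one may deterministically set $x(k)=1$ iff $Y\cap E(x\restriction k)\cap E_k$ is infinite, so the argument lives entirely in $ZF$ (compactness of $2^\omega$ being a $ZF$ theorem). Second, your opening parenthetical that $\NSA$ is separated is correct and a pleasant addition, though not used in the argument proper.
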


\begin{proof}
Recall that for any $F \in \frec$ and any cover $\{F_i\}_{i \in I}$ of $F$, a \textit{matching family} is a family $\{a_i\}_{i \in I}$ such that $a_i \in \NSA(F_i)$ for any $i \in I$, and for any $i, j \in I$ with $f: F_j \supseteq F_i$, $\NSA(f)(a_i) = a_j$. To show that $\NSA$ is not a sheaf, it is enough to find a cover $\{F_i\}_{i \in I}$ of a some $F \in \frec$ and a matching family $\{a_i\}_{i \in I}$ such that for all $a \in A^\omega$ there is $i \in I$ with $a \notin a_i$.\\ Let $F$ be the Fréchet filter, and recall that one can prove in $ZF+DC$ that there is an almost disjoint family $\{A_f\}_{f \in 2^\omega}$ of infinite subsets of $\omega$ indexed by functions from $\omega$ to $\{0,1\}$ \cite[p.~118]{jech2013set}. For any $f: \omega \to 2$, let $F_f$ be the filter generated by $F \cup A_f$. Note that, whenever $f \neq g$, $F_f$ and $F_g$ have no common extension since $|A_f \cap A_g| < \aleph_0$. Let $\{H_j\}_{j \in J}$ be the collection of all $H \supseteq F$ such that $H\bot F_f$ for every $f \in 2^\omega$, and let $D = \bigcup_{f \in 2^\omega} \{H \in \frec : H \supseteq F_f\} \cup \{H_j\}_{j \in J}$. By construction $D$ is a cover of $F$. Now for any $f \in 2^\omega$, let $a^f:\omega \to A$ be the constant function with range $\{\pi(f)\}$, and let $\alpha: \omega \to A$ be such that $\alpha(i) = \pi(\chi_{\{i\}})$ for any $i \in \omega$, where $\chi_{\{i\}}$ is the characteristic function of the singleton set $\{i\}$. Now consider the family $C = \bigcup_{f \in 2^\omega}\{a^f_H : H \supseteq F_f\} \cup \{\alpha_{H_j}\}_{j \in J}$. Clearly, since on the one hand we selected the $H_j$-equivalence class of $\alpha$ for any $j \in J$ and the $H$-equivalence class of $a^f$ for any $f \in 2^\omega$ and any $H$ extending $F_f$, and since on the other hand for any $f \in 2^\omega$, $F_f$ is incompatible with $H_j$ for any $j \in J$ and with any $H\supseteq F_g$ for any $g \neq f \in 2^\omega$, $C$ is a matching family. However, for any $a : \omega \to A$ and any distinct $f, g : \omega \to 2$, observe that $||a = a^f|| \cap ||a = a^g|| = \emptyset$. Moreover, since $\omega  \supseteq \bigcup_{f \in 2^\omega} ||a = a^f||$, it must be the case that $||a = a^f|| = \emptyset$ for some $f \in 2^\omega$ (otherwise, one could define a injection from $2^\omega$ into $\omega$, contradicting Cantor's theorem). Hence for any $a: \omega \to A$ there is $f \in 2^\omega$ such that $a \notin a^f_{F_f}$, which shows that $\NSA$ is not a sheaf.
\end{proof}

Let us note that, if we assume the Axiom of Choice, the argument above can be easily be modified to show that $\NSA$ is not a sheaf for any uncountable structure $\mathscr{A}$. Whether this is a necessary condition on $\mathscr{A}$ for $\NSA$ not to be a sheaf is left as an open problem. In any case, since we can define in $ZF+DC$ an injection from $2^\omega$ into $\RR$, it follows that $\,^*\!\mathcal{R}$ is not a sheaf. Since the nonstandard extensions we defined are not sheaves in general, we cannot work in the Grothendieck topos of sheaves over $\frec$ and straightforwardly apply the machinery of sheaf semantics. Fortunately, possibility semantics still allows us to have a notion of a model which, as we have seen in the first section, is robust enough to develop a significant part of nonstandard analysis \textit{à la} Robinson. In other words, the ``decategorification'' of the category of filters in Palmgren's approach, while allowing for a drastic simplification of many technical details, comes at the cost of not providing a nonstandard ``ambient universe'', like the one postulated in Nelson's Internal Set Theory or offered by the Grothendieck topos of sheaves over $\mathbb{B}$. In that sense, $\fhyl$ can be seen as a watered down version of Palmgren's constructive nonstandard analysis: because we work in a semi-constructive setting, we do not need the full generality of sheaf semantics nor the full power of topos theory, and we can work instead in a possibility structure which resembles more closely Tarskian semantics. Additionally, the fact that $\fhyl$ can be presented as a presheaf on a poset category endowed with the dense topology is, in itself, not surprising in the slightest. Indeed, sheaf semantics was originally conceived as a generalization of Cohen forcing, which corresponds precisely in topos theory to working in a topos of sheaves over a poset category endowed with the dense topology \cite[Section 6.2]{maclane2012sheaves}. As we will now see, $\fhyl$ itself is tightly connected to forcing and to Boolean-valued models of analysis.

\section{The Generic Approach} \label{gen}

The third and last alternative approach to classical nonstandard analysis that I consider here involves using Boolean-valued models rather than two-valued models. Unlike the previous two, it does not stem from a desire to make nonstandard analysis more constructive but instead from an interest in viewing the ultrapower construction as a special case of a more general kind of algebraic construction. This approach is also the one with the tightest relationship to set-theoretic forcing, and, as we will see, will motivate our last perspective on $\fhyl$ as a generic approximation of a Luxemburg ultrapower.

\subsection{Boolean-Valued Models} \label{scott}
 
In \cite{scottboolean}, Scott remarks that Boolean-valued models, even though they appeared originally in the context of forcing in set theory, can also be seen as a generalization of ultraproducts:

\begin{quote}
    The idea of constructing Boolean-valued models could have been (but was not) discovered as a generalization of the ultraproduct method used now so often to obtain nonstandard models for ordinary analysis. Roughly, we can say that ultraproducts use the \textit{standard} Boolean algebras (the power-set algebras) to obtain models elementarily equivalent to the standard model, whereas the Boolean methods allows the nonstandard complete algebras (such as the \textit{Lebesgue} algebra of measurable sets modulo sets of measure zero or the \textit{Baire} algebra of Borel sets modulo sets of the first category). Thus the Boolean method leads to \textit{nonstandard} nonstandard models that are not only not isomorphic to the standard model but are not even equivalent. Nevertheless, they do satisfy all the usual axioms and deserve to be called models of analysis. \cite[pp.~87-88]{scottboolean}
\end{quote}

Scott goes on to explain how ultraproducts can be seen as a special case of quotients of a Boolean-valued model. Given a family of Tarskian $\la$-structures $\{\spa{A}_i\}_{i \in I}$, one may consider the Boolean-valued model whose domain is the direct product $A = \Pi_{i \in I} A_i$ of the domains of the models $\{\spa{A}_i\}_{i \in I}$, the Boolean algebra of truth-values is the powerset of the index set $\Po(I)$, and for any tuple $\overline{a}$ of elements of $A$, and any $\la$-formula $\phi(\overline{x})$, the truth-value of $\phi(\overline{a})$ is $||\phi(\overline{a})|| = \{i \in \omega : \spa{A}_i \models \phi(\overline{a}(i))\}$. A straightforward induction on the complexity of formulas shows that this indeed correctly defines a Boolean-valued model. An ultraproduct of the models $\{\spa{A}_i\}_{i \in I}$ can then be obtained by taking an ultrafilter on $\Po(I)$, which is the same as a Boolean homomorphism $f_U : \Po(I) \to \{0,1\}$, quotienting the domain $A$ by the equivalence relation $a \sim_U b$ iff $f_U(||a = b||) =1$, and defining a Tarskian model on this domain by letting $\phi(\overline{a_U})$ be true in the model iff $f_U(||\phi(\overline{a})||) = 1$ for $\phi(\overline{x})$ a $\la$-formula and $\overline{a_U}$ the tuple of equivalence classses of a tuple $\overline{a}$ of elements of $A$. By \L\"os's Theorem, the model thus obtained is precisely the ultraproduct of the family of models $\{\spa{A}_i\}_{i \in I}$ by the ultrafilter $U$. As Scott concludes:

\begin{quote}
    In short, we have divided the ultraproduct construction into two stages: \textit{product} followed by \textit{ultra}. It is the generalization of the product part we wish to emphasize. [p.~89]
\end{quote}

Scott goes on to show how to define a $B$-valued model of analysis in a similar fashion when $B$ is a complete Boolean algebra, but not necessarily the powerset set of some set $I$, and that one may still retrieve Tarskian models as quotients of such Boolean-valued models modulo an ultrafilter on $B$. In other words, the Boolean-valued models of analysis so constructed are ``one ultrafilter away'' from being Tarskian, nonstandard models of analysis.\\

Let us now briefly see how $\fhyl$ connects to Boolean-valued models. As mentioned in \cref{poss}, given an $\la$-possibility structure $(\mathfrak{P},D,\itp)$, an $\la$-formula $\phi(\overline{a})$ and a tuple $\overline{a}$ of elements of $D$, the set $\llbracket \phi(\overline{a}) \rrbracket = \{p \in P : p \fr \phi(\overline{a})\}$ is always a regular-open subset of $\mathfrak{P}$ and the regular opens of any poset $\mathfrak{P}$ always form a complete Boolean algebra $\RO(\mathfrak{P})$. We may therefore think of any possibility structure $(\mathfrak{P},D,\itp)$ as a Boolean-valued model with domain $D$ and algebra of truth values $\RO(\mathfrak{P})$. Since any complete Boolean algebra is isomorphic to $\RO(\mathfrak{P})$ for some poset $\mathfrak{P}$, this means that possibility semantics is as general as the Boolean-valued models discussed by Scott.

What about the algebra of truth-values of $\fhyl$ specifically? The non-principal filters on $\Po(\omega)$ are in one-to-one correspondence with the proper filters on $\Po(\omega)^*$, the Boolean algebra obtained by quotienting $\Po(\omega)$ by the ideal of finite sets. Moreover, it follows from \cite[Theorem 5.49]{Hol16} that the algebra of regular open sets of the poset of all proper filters on a Boolean algebra $B$ is isomorphic to the \textit{canonical extension} of $B$, usually written $B^\delta$.\footnote{See \cite{Dunn} for more on canonical extensions.} Thus $\fhyl$ can be thought of as a $(\Po(\omega)^*)^\delta$-valued Boolean valued model of analysis. Since the Ultrafilter Lemma is equivalent to the statement that the canonical extension of any Boolean algebra $B$ is always isomorphic to the powerset of some set (in fact, of the set $X_B$ of all ultrafilters on $B$), it holds for $\Po(\omega)$ if and only if $\fhyl$ is a ``standard'' nonstandard Boolean-valued model of analysis, to use Scott's terminology. As we will see in the next section, there is indeed a strong sense in which $\fhyl$ is ``one ultrafilter away'' from a Luxemburg ultrapower.

\subsection{Generic Models and Luxemburg ultrapowers} \label{ult}
As the comparison with Scott's Boolean-valued models of analysis in the previous section showed, we may think of $\fhyl$ as a Boolean-valued model that captures the semi-constructive part of the standard ultrapower construction, i.e., the ``product'' part. If we think of a Boolean-valued model as a ``fuzzy'' Tarskian model, in which the classical truth-value of every formula is not always settled, we may therefore think that $\fhyl$ is a partial approximation of a Luxemburg ultrapower, the best thing we can get in the absence of the Axiom of Choice. Similarly, viewpoints in $\frec$ can also be interpreted as approximations of classical Luxemburg ultrapowers in a fairly strong sense. A simple way to flesh out the details of the idea is to exploit the fact that possibility semantics coincides with forcing semantics. As such, it is natural to import some elementary concepts from forcing into the study of possibility models and in particular the notion of a generic filter on a forcing poset.

\begin{definition}
A subset $D$ of $\frec$ is \textit{dense} if for any $F \in \frec$ there is $F' \in D$ such that $F \sset F'$. A \textit{generic filter} on $\frec$ is a directed subset $G$ of $\frec$ (meaning that for any $F,F' \in G$ there is $H \in G$ such that $H \supseteq G,G'$) that is upward closed (meaning that $F \sset F' \in G$ implies $F \in G$) and has non-empty intersection with every dense subset of $\frec$.
\end{definition}

A generic filter on a forcing poset $\mathbb{P}$ can usually be thought as a coherent way of choosing a maximal set of conditions in that poset. Since we want to think of viewpoints in $\frec$ as partial approximations of a Tarskian model, it is then natural to wonder whether a generic filter on $\frec$ would allow us to define such a Tarskian model. The following lemma shows that this is indeed possible.

\begin{lemma}
Let $G$ be a generic filter on $\frec$. Then there is an equivalence relation $\sim_G$ on $\RR^\omega$ and a Tarskian model $\RR_G$, with domain $\{a_G : a \in \RR^\omega\}$, such that for any tuple $\overline{a} \in \RR^\omega$ and any formula $\phi$, $\RR_G \models \phi(\overline{a_G})$ iff there is $F \in G$ such that $F \Vdash \phi(\overline{a})$.
\end{lemma}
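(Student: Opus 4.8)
The plan is to build $\RR_G$ as the reduced power of $\RR$ modulo the ultrafilter on $\Po(\omega)$ that a generic filter $G$ selects, and then to verify the stated satisfaction condition by a routine induction, the only subtle points being the quantifier cases, where genericity replaces the Axiom of Choice. First I would define $\sim_G$ on $\RR^\omega$ by $a \sim_G b$ iff there is $F \in G$ with $F \fr a = b$, equivalently (by \cref{los}) iff $||a = b|| \in F$ for some $F \in G$; since $G$ is directed and upward closed, this is an equivalence relation, and in fact the collection $U_G = \{X \sset \omega : X \supseteq ||a=b|| \text{ for some } a,b \text{ with } a \sim_G b\}$ — more directly, $\{X \sset \omega : X \in F \text{ for some } F \in G\}$ — is easily checked to be a filter on $\omega$ extending $F_0$. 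The key observation is that $U_G$ is an \emph{ultrafilter}: given any $X \sset \omega$, the set $D_X = \{F \in \frec : X \in F \text{ or } \omega \setminus X \in F\}$ is dense in $\frec$ (given any $F$, at least one of $F \cup \{X\}$ or $F \cup \{\omega \setminus X\}$ generates a proper filter, which is still non-principal, hence lies in $\frec$ and extends $F$), so $G \cap D_X \neq \emptyset$ and thus $X \in U_G$ or $\omega \setminus X \in U_G$. Non-principality of $U_G$ is immediate since every $F \in G$ is non-principal.

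Next I would let $\RR_G$ be the reduced power $\RR^\omega / \sim_G$, i.e. the classical ultrapower $\RR^\omega / U_G$, with relation and function symbols interpreted in the usual \L o\'s-style way: $\RR_G \models R(\overline{a_G})$ iff $||R(\overline{a})|| \in U_G$, and $\itp(f)$ acting pointwise (well-definedness on $\sim_G$-classes follows exactly as in \cref{lma1}). By classical \L o\'s's Theorem (which holds here since $U_G$ is a genuine ultrafilter, so no choice is needed beyond what produced $G$), for every $\la$-formula $\phi$ and tuple $\overline{a}$ we have $\RR_G \models \phi(\overline{a_G})$ iff $||\phi(\overline{a})|| \in U_G$. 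It then remains to show $||\phi(\overline{a})|| \in U_G$ iff there is $F \in G$ with $F \fr \phi(\overline{a})$; but by \cref{los}, $F \fr \phi(\overline{a})$ iff $||\phi(\overline{a})|| \in F$, and $||\phi(\overline{a})|| \in U_G$ iff $||\phi(\overline{a})|| \in F$ for some $F \in G$ by the very definition of $U_G$. This is the whole content, once \L o\'s for $\RR_G$ is in hand.

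Alternatively — and this is the cleaner route to present — one can avoid invoking classical \L o\'s entirely and argue directly by induction on $\phi$ that $\RR_G \models \phi(\overline{a_G})$ iff $\exists F \in G\, (F \fr \phi(\overline{a}))$. The atomic case is the definition of $\itp$; conjunction uses directedness of $G$ to find a common $F$; negation uses that $U_G$ is an ultrafilter (so $\RR_G \not\models \psi(\overline{a_G})$ forces $\omega \setminus ||\psi(\overline{a})|| \in U_G$, hence some $F \in G$ forces $\neg\psi$, and conversely persistence plus the ultrafilter property); the existential case is where genericity does real work — if $\RR_G \models \exists x\, \psi(x, \overline{a_G})$ then some witness $b_G$ exists, and by induction some $F \in G$ forces $\psi(b, \overline{a})$, hence forces $\exists x\,\psi$, while conversely if some $F \in G$ forces $\exists x\,\psi(x,\overline{a})$ then by the \emph{fullness} of $\fhyl$ noted in \cref{rmk1} there is $b \in \fhyl$ with $F \fr \psi(b,\overline{a})$, so $\RR_G \models \psi(b_G, \overline{a_G})$ by induction. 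The main obstacle is precisely this existential step: one must be sure that genericity (via density arguments of the kind used for $D_X$ above) supplies the witness-stability that the Axiom of Countable Choices supplied in \cref{los}, and that the fullness remark of \cref{rmk1} is what bridges ``$F$ forces $\exists x\,\psi$'' to ``$F$ forces $\psi(b)$ for an actual $b$'' without leaving $G$. Everything else is bookkeeping with filters.
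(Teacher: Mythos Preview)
Your proposal is correct. The paper's own proof is your \emph{second} route: it defines $\sim_G$ and the interpretation of atomic formulas exactly as you do, then asserts that the biconditional follows by a straightforward induction on complexity, singling out only the existential case and noting (as you do) that the right-to-left direction goes through via the fullness of $\fhyl$ from \cref{rmk1}.

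Your \emph{primary} route is a genuine, if mild, departure: you first extract the non-principal ultrafilter $U_G = \bigcup G$ from $G$ via the density of the sets $D_X$, recognise $\RR_G$ as the classical ultrapower $\RR^\omega/U_G$, and then invoke classical \L o\'s together with \cref{los} to finish. This is perfectly valid and in effect anticipates the paper's later \cref{corr} and \cref{geniso}, which establish exactly this correspondence between generic filters and non-principal ultrafilters and the resulting isomorphism of generic models with Luxemburg ultrapowers. The trade-off is that the paper keeps the present lemma self-contained and defers the ultrafilter correspondence to a separate statement, whereas your approach collapses the two steps; conceptually cleaner in one sense, but it front-loads the genericity argument that the paper prefers to isolate. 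One small caveat: your parenthetical ``no choice is needed beyond what produced $G$'' is slightly misleading, since classical \L o\'s for the existential case still uses Countable Choice to select witnesses---this is available in the ambient $ZF+DC$, but it is worth being precise, as the paper is elsewhere careful about exactly this point.
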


\begin{proof}
Let $G$ be generic, and for any $a, a' \in A^\omega$, let $a \sim_G a'$ iff there is $F \in G$ such that $a_F = a'_F$. It is routine to verify that $\sim_G$ is an equivalence relation (transitivity relies on $G$ being directed). Let the domain of $\RR_G$ be $\{a_G: a \in \RR^\omega\}$, and for any relation symbol $R$, let $\overline{a_G} \in \mathscr{I}(R)$ iff there is $F \in G$ such that $F \Vdash R(\overline{a})$. Once again, directedness of $G$ ensures that this is well-defined. Let $\mathscr{A}(G)$ be the Tarskian model thus defined. We claim the following:
\begin{align*}
    \forall \overline{a_G}: \mathscr{\RR}_G \models \phi(\overline{a_G}) \Leftrightarrow \exists F \in G: F \Vdash \phi(\overline{a})
\end{align*}

The proof is a straightforward induction on the complexity of $\phi$. Note however that the right-to-left direction of the existential case holds even in a semi-constructive setting, because of the fullness of $\mathscr{A}$ mentioned in \Cref{rmk1}.
\end{proof}

Generic models, determined by generic filters on $\frec$, are thus the classical Tarskian models that are being approximated by viewpoints in $\frec$. As we shall see below, they are also tightly connected to Luxemburg ultrapowers. The next lemma establishes a one-to-one correspondence between generic filters on $\frec$ and non-principal ultrafilters on $\omega$, which can then be used to show that every generic model is isomorphic to a Luxemburg ultrapower, and vice-versa. 

\begin{lemma} \label{corr}
The generic filters on $\frec$ are in one-to-one correspondence with the non-principal ultrafilters on $\omega$.
\end{lemma}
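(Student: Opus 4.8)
The plan is to exhibit explicit maps in both directions and show they are mutually inverse. Given a non-principal ultrafilter $U$ on $\omega$, set $G_U = \{F \in \frec : F \sset U\}$ (the up-set of $U$ in the poset $\frec$); conversely, given a generic filter $G$ on $\frec$, set $U_G = \bigcup_{F \in G} F$. The first task is to check that $G_U$ is a generic filter. Upward closure is immediate, since $F \sset F' \sset U$ with $F \in \frec$ gives $F \in G_U$; directedness holds because $U \in G_U$ and $U \rset F$ for every $F \in G_U$, so $U$ itself is the required common extension. The only point needing an idea is genericity: for a dense $D \sset \frec$, apply the density condition to the element $U$ to obtain $F' \in D$ with $U \sset F'$; since $F'$ is a proper filter and $U$ is an ultrafilter, $F' = U$, so $U \in D \cap G_U$. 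In other words, the non-principal ultrafilters are exactly the $\rset$-maximal elements of $\frec$, which is what makes this direction short.

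For the converse I would first verify that $U_G$ is a proper, non-principal filter on $\omega$: closure under supersets is inherited from the members $F \in G$, closure under finite intersection uses directedness of $G$ (take $A \in F$, $B \in F'$, pass to a common $H \in G$ extending both), and properness and non-principality hold because no $F \in \frec$ contains $\emptyset$ or a finite set. The crucial use of genericity is to show $U_G$ is an ultrafilter. For each $A \sset \omega$, let $D_A = \{F \in \frec : A \in F \text{ or } \omega \setminus A \in F\}$, and argue $D_A$ is dense; this is the main technical step. Given $F \in \frec$, one splits into two cases: if $B \cap A$ is infinite for every $B \in F$, then $F \cup \{A\}$ generates a proper, non-principal filter extending $F$ and containing $A$; if instead some $B \in F$ has $B \cap A$ finite, then for every $B' \in F$ the set $(B \cap B') \setminus A$ is an infinite set minus a finite one, hence infinite, so $F \cup \{\omega\setminus A\}$ generates a proper, non-principal filter extending $F$ and containing $\omega \setminus A$. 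Either way $F$ extends into $D_A$. Since $G$ meets $D_A$, we get $A \in U_G$ or $\omega\setminus A \in U_G$, and directedness of $G$ rules out both holding at once; so $U_G$ is a non-principal ultrafilter.

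It then remains to check the two maps are mutually inverse. The identity $U_{G_U} = U$ is immediate, since $U$ is the largest member of $G_U$. For $G_{U_G} = G$: the inclusion $G \sset G_{U_G}$ is immediate from the definition of $U_G$ as a union. For the reverse inclusion I would first show $U_G \in G$ by applying genericity to the set $\{U_G\} \cup \{F \in \frec : F \bot U_G\}$, which is dense precisely because $U_G$ is an ultrafilter (every $F \in \frec$ either refines into $U_G$ or is incompatible with it), and noting that $G$ cannot meet the second part, since every $F \in G$ is $\sset U_G$ and hence compatible with $U_G$. Once $U_G \in G$, any $F \in \frec$ with $F \sset U_G$ lies in $G$ by upward closure of $G$, giving $G_{U_G} \sset G$. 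This yields the desired bijection. I expect the density argument for $D_A$ to be the main obstacle: the subtlety is ensuring that the witnessing filters stay non-principal, which forces the two-case split above rather than naively adjoining $A$ (or $\omega \setminus A$) to $F$.
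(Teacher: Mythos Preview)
Your proposal is correct and follows essentially the same approach as the paper: the maps $U \mapsto \{F \in \frec : F \sset U\}$ and $G \mapsto \bigcup G$ are exactly the paper's $\beta$ and $\alpha$, and the key dense sets $D_A$ and the incompatibility-based set you use for $G_{U_G} = G$ mirror the paper's argument. The only cosmetic difference is that you prove $G_{U_G} \subseteq G$ by showing $U_G \in G$ in one shot via the dense set $\{U_G\} \cup \{F : F \bot U_G\}$, whereas the paper argues the contrapositive element-by-element using $D_F = \{H : H \supseteq F \text{ or } H \bot F\}$; both are straightforward applications of genericity.
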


\begin{proof}
Given $G$ a generic filter on $\frec$, let $\alpha(G) = \bigcup G$. To see that this is well defined, observe first that, since $G$ is a filter on $\mathscr{G}$, it is a directed family of filters on $\omega$, and thus its union is also a filter on $\omega$. More over, for any $A \sset \omega$, the set $D_A = \{F \in \frec : A \in F \text{ or } -A \in F\}$ is dense. Since $G$ is generic, this means that $D_A \cap G \neq \emptyset$, hence $A \in \bigcup G$ or $- A \in \bigcup G$. This shows that $\alpha(G)$ is an ultrafilter on $\omega$, and since $\alpha(G)$ contains the Fréchet filter, it is clearly non-principal.

Conversely, given a non-principal ultrafilter $U$ on $\omega$, let $\beta(U) = \{F \in \frec : F \sset U\}$. Since $\beta(U)$ is the principal upset generated by $U$ in $\frec$, it is clearly a filter. Moreover, if $U$ is an ultrafilter, then it is an atom in $\frec$, meaning that $F' = U$ for any $F' \supseteq U$. Hence $U \in D$ for any dense subset $D$ of $\frec$, from which it follows that $\beta(U)$ is generic.

Next, we check that for any generic $G$, $\beta(\alpha(G)) = \{F \in \frec : F \sset \bigcup G\} = G$. The right-to-left inclusion is immediate. For the converse, suppose $F \notin G$, and consider the set $D_F = \{H \in \frec : H \supseteq F \text{ or } H\bot F\}$. Clearly, $D_F$ is dense, so there is $H \in D_F \cap G$. Moreover, since $F \notin G$, it must be the case that $F \bot H$. Hence there is $B \in H$ such that $\neg B \in F$. Since $H \sset \bigcup G$ and $\bigcup G$ is a filter on $\omega$, this means that $F \nsubseteq \bigcup G$.

Similarly, we claim that for any ultrafilter $U$, $\alpha(\beta(U)) = \bigcup \{F \in \frec : F \sset U\} = U$. The left-to-right inclusion is clear, and the converse follow from the fact that $\alpha(\beta(U))$ is an ultrafilter, hence maximal.
\end{proof}

Let us note that, assuming the Ultrafilter Lemma, there is also an elegant proof of the result above that uses the theory of Boolean algebras. By the observation made at the end of \cref{scott}, the algebra $\RO(\frec)$ of regular open subsets of $\frec$ is isomorphic to the canonical extension of $\Po(\omega)^*$, which, by Stone duality, is itself isomorphic to $\Po(X_{\Po(\omega)^*})$, the powerset of the set of ultrafilters on $\Po(\omega)^*$. Moreover, it is a well-known result in the forcing literature \cite[p.~156]{jech1997set} that a generic filter on a poset $\mathbb{P}$ corresponds to a complete non-principal ultrafilter on $\RO(\mathbb{P})$, where an ultrafilter $U$ on a complete Boolean algebra $B$ is complete if $\bigme S \in U$ whenever $S \sset U$, for any $S \sset B$. This means that any generic filter on $\frec$ is essentially a principal ultrafilter on $\Po(X_{\Po(\omega)^*})$, i.e., it is induced by a point in $X_{\Po(\omega)^*}$. But points in $\Po(X_{\Po(\omega)^*})$ are ultrafilters on $\Po(\omega)^*$, which are in one-to-one correspondence with non-principal ultrafilters on $\omega$. However, the direct construction of the one-to-one correspondence between generic filters on $\frec$ and non-principal ultrafilters on $\omega$ described in the proof of \cref{corr} also allows us to establish an equivalence between generic models and Luxemburg ultrapowers.

\begin{theorem} \label{geniso}
For any non-principal ultrafilter $U$ on $\omega$, the ultrapower induced by $U$ is isomorphic to the generic model determined by $\beta(U)$. Conversely, for any generic filter $G$ over $\frec$, the generic model $\RR_G$ is isomorphic to the ultrapower induced by $\alpha(G)$.
\end{theorem}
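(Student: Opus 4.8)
The plan is to show that the obvious quotient–identity map is the desired isomorphism, and then to deduce the converse formally from \cref{corr}. Fix a non-principal ultrafilter $U$ on $\omega$, write $\RR^\omega/U$ for the Luxemburg ultrapower it induces, and let $G=\beta(U)=\{F\in\frec:F\sset U\}$, so that $\RR_G$ is the generic model determined by $G$. By \cref{corr} we have $\alpha(G)=\bigcup G=U$; this single identity is what makes everything fit together, since it says that a subset of $\omega$ lies in $U$ exactly when it lies in some member of $G$ (using that $G$, being a filter on $\frec$, is an up-directed family of filters on $\omega$). I would define $h:\RR^\omega/U\to\RR_G$ by $h(a_U)=a_G$ and argue that $h$ is a well-defined bijection preserving and reflecting all of the structure.

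Next I would record the two \L\'os-style computations that drive the argument. On the ultrapower side, the classical \L\'os Theorem gives $\RR^\omega/U\models\phi(\overline{a_U})$ iff $||\phi(\overline{a})||\in U$ for every $\la$-formula $\phi$ and tuple $\overline{a}\in\RR^\omega$. On the generic side, the lemma defining $\RR_G$ gives $\RR_G\models\phi(\overline{a_G})$ iff $F\fr\phi(\overline{a})$ for some $F\in G$, and by \cref{los} this holds iff $||\phi(\overline{a})||\in F$ for some $F\in G$, i.e.\ iff $||\phi(\overline{a})||\in\bigcup G=U$. Hence, for every formula $\phi$ and tuple $\overline{a}$,
\[
\RR^\omega/U\models\phi(\overline{a_U})\quad\Longleftrightarrow\quad||\phi(\overline{a})||\in U\quad\Longleftrightarrow\quad\RR_G\models\phi(\overline{a_G}).
\]
Applying this to the formula $x_1=x_2$ shows that $h$ is well-defined and injective, and it is visibly surjective since every element of $\RR_G$ is of the form $a_G$; applying it to atomic relational formulas shows $h$ preserves and reflects all relations. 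Finally, both structures interpret every function symbol by the pointwise operation on representatives (this is how $\itp(f)$ is defined in $\fhyl$, and it descends unchanged to $\RR_G$), so $h$ commutes with all function symbols. Therefore $h$ is an isomorphism of $\la$-structures, which is the first assertion.

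For the converse, let $G$ be an arbitrary generic filter on $\frec$ and put $U=\alpha(G)=\bigcup G$, which is a non-principal ultrafilter on $\omega$ by \cref{corr}. By the same lemma, $\beta(U)=\beta(\alpha(G))=G$, so the first part of the theorem, applied to this $U$, says precisely that the ultrapower induced by $\alpha(G)=U$ is isomorphic to the generic model determined by $\beta(U)=G$, namely $\RR_G$. This completes the argument.

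There is no serious obstacle here: the whole proof is a bookkeeping exercise once \cref{corr} and the two versions of \L\'os's Theorem are in hand. The one point that deserves care is the translation between ``$X\in U$'' and ``$X\in F$ for some $F\in G$'', which is exactly the content of $U=\bigcup G$ but relies on $G$ being up-directed, and this fact is used in both directions (well-definedness of $h$ and preservation of structure), so it should be stated explicitly rather than waved at. It is also worth remarking, as elsewhere in the paper, that the right-to-left direction in the satisfaction clause for $\RR_G$ used here requires only the fullness of $\fhyl$ noted in \cref{rmk1}, so the argument stays within $ZF+DC$ (the ultrafilter $U$ is of course given, not constructed).
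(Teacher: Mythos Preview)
Your proposal is correct and follows essentially the same approach as the paper: define the map $a_U\mapsto a_{\beta(U)}$ and verify via the two \L\'os-style computations that the equivalence relations and relational structure coincide. The only minor difference is that for the converse the paper argues directly and symmetrically via $a_G\mapsto a_{\alpha(G)}$, whereas you reduce to the first part using $\beta(\alpha(G))=G$ from \cref{corr}; both are equally valid and neither offers a real advantage over the other.
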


\begin{proof}
Note that for any non-principal ultrafilter $U$ and any $a, a' \in \RR^\omega$, $$ a \sim_U a' \text{ iff } ||a = a'|| \in U \text{ iff } \exists F \in \beta(U) : ||a = a'|| \in F \text{ iff } a \sim_{\beta(U)} a'.$$ Thus the map $a_U \mapsto a_{\beta(U)}$ is well-defined and is easily seen to be an isomorphism between $\RR_U$ and $\RR_{\beta(U)}$. Similarly, for any generic filter $G$, the map $a_G \mapsto a_{\alpha(G)}$ is well-defined, and is an isomorphism between $\RR_G$ and $\RR_{\alpha(G)}$.
\end{proof}

As a consequence of \cref{geniso}, we may now think of points in $\frec$ as partial approximations of a Luxemburg ultrapower in a very precise way. It is easy to see that, assuming the Ultrafilter Lemma for $\Po(\omega)^*$, or, equivalently, the existence of generic filters over $\frec$ containing any $F \in \frec$, we have that, for any formula $\phi(\overline{x})$, any tuple $\overline{a} \in \RR^\omega$ and any $F \in \frec$, $F \fr \phi(\overline{a})$ if and only if $\RR_U \models \phi(\overline{a_U})$ for any ultrafilter $U$ that extends $F$. Indeed, if $F \sset U$ for some ultrafilter $U$, then $F \in \beta(U)$, hence $F \fr \phi(\overline{a})$ implies $\RR_{\beta(U)} \models \phi(\overline{a_U})$. Conversely, if $F \not \fr \phi(\overline{a_U})$, then there is $F' \rset F$ such that $G \fr \neg \phi(\overline{a})$, and hence there is a generic filter $G$ such that $F,F' \in G$ for which $\RR_G \models \neg \phi(\overline{a_G})$. But then $\alpha(G)$ is an ultrafilter such that $F \sset \alpha(G)$ and $\RR_{\alpha(G)} \not \models \phi(\overline{a_{\alpha(G)}})$. Thus satisfaction of a formula $\phi$ at a viewpoint $F$ corresponds to truth in all ultrapowers induced by an ultrafilter extending $F$. In the presence of a strong enough fragment of the Axiom of Choice like the Ultrafilter Lemma, this means that one can take any $F \in \frec$ for a representative of the set of all Luxemburg ultrapowers induced by an ultrafilter extending $F$. As satisfaction at $F$ coincides with truth in all ultrapowers induced by an ultrafilter extending $F$, this means that any filter $F$ can be thought of as providing a partial viewpoint on $\fhyl$, in the sense that the satisfaction relation on $F$ captures exactly how much information we would have about a classical hyperreal line if we knew only that $F$ is in the ultrafilter. This also means that $F_0$, the Fréchet filter, truly stands for a ``generic'' ultrapower, since $F_0$ forces precisely those formulas that would be satisfied by any Luxemburg ultrapower.

\section{F-Hyperreals and Objections to the Hyperreals} \label{obj}
In the previous sections, I have introduced $\fhyl$ as a possibility structure (\cref{PossPow}), explored some of its mathematical properties (\cref{fhyl}), and argued that it arises naturally as a point of convergence for several distinct mathematical endeavors (\cref{asym,dyn,gen}) Along the way, I have also compared $\fhyl$ with these alternative approaches to nonstandard analysis and argued that it has several technical advantages over its competitors, as it is a more powerful version of Laugwitz's and Tao's approach with reduced powers, a simpler framework than Palmgren's sheaf-theoretic approach, and a more concrete version of the Boolean-valued approach of Scott. In this final section, I will relate the $F$-hyperreals to the classical Robinsonian hyperreal line from a philosophical and methodological perspective. Robinson's work and its significance for the role that infinitesimals can play both in mathematics and in the empirical sciences has sparked many debates over the years. Here, I want to focus on objections that have been raised against the applicability of nonstandard analysis to mathematics itself, rather than the broader debate of its applicability to other sciences. In particular, I will examine in turn two distinct worries about Robinsonian hyperreals, namely, that they introduce impure methods in analysis and that they do not form a canonical structure. In both cases, I will argue that our semi-constructive $F$-hyperreal line fares at least as well as, and often better than, the classical hyperreal line with respect to these arguments.

\subsection{The argument from purity and definability}\label{pur}

There is a longstanding tradition of criticisms of Robinsonian nonstandard analysis. As we have seen in \cref{tao}, Tao expressed some uneasiness towards the fact that classical nonstandard analysis uses infinitary, highly non-constructive objects like non-principal ultrafilters to derive results about ordinary mathematical objects such as sets of real numbers. Dauben \cite{dauben1987abraham} recalls that Bishop, who proposed an alternative, constructive approach to analysis, criticized the use of notions from nonstandard analysis to teach elementary calculus in harsh terms, claiming that ``[i]t is difficult to believe that debasement of meaning could be carried so far'' \cite[pp.~513-514]{bishop1975}. Dauben also argues that Robinson's interest in the history of infinitesimal calculus was, at least in part, motivated by the desire to show that the tools and methods of nonstandard analysis provided in fact a natural conceptual framework for standard analysis:

\begin{quote}
    History could serve the mathematician as propaganda. Robinson was apparently concerned that many mathematicians were prepared to adopt a ``so what'' attitude toward nonstandard analysis because of the more familiar reduction that was always possible to classical foundations [...]. But, as Robinson also began to argue with increasing frequency and in greater detail, \textit{historically} the concept of infinitesimals has always seemed natural and intuitively preferable to more convoluted and less intuitive sorts of rigor. \cite[p.~184]{dauben1987abraham}
\end{quote}

It seems that the debate between Robinson and his critics here can be conveniently phrased in terms of purity of methods, in particular in terms of \textit{topical} and \textit{elemental} purity. As discussed in detail in \cite{arana2017alleged,arana2011purity}, topically pure proofs of a theorem are proofs whose content does not exceed the content of the theorem, i.e., proofs that do not involve objects or concepts that are foreign to the topic of the theorem. By contrast, elementally pure proofs of a theorem are proofs whose proof-theoretic or foundational resources do not exceed those of the theorem. Both kinds of concerns seem to be routinely raised against the methods of nonstandard analysis. Bishop's criticism that the use of nonstandard concepts in elementary calculus is a ``debasement of meaning'' can be understood as the complaint that such proofs disregard the meaning of elementary notions of calculus like that of a limit, while Tao's uneasiness with non-principal ultrafilters and the infinitary aspect of nonstandard methods speaks to a desire for elementally pure proofs. 

How does $\fhyl$ compare to the classical hyperreal line in that respect? I think we may argue that it performs as least as well as Robinson's hyperreal line from the topical point of view, i.e., regarding the continuity with historical and contemporary mathematical practice, and strictly better from the foundational or elemental viewpoint. Indeed, the asymptotic perspective on $\fhyl$ presented in \cref{asym} highlights its continuity with both historical developments in infinitesimal calculus, as evidenced by the fact that $\fhyl$ is a strengthening of Laugwitz's approach with reduced powers, and with contemporary mathematical practice, since it can be seen as a formal counterpart to Tao's cheap nonstandard analysis. As a simplified version of Palmgren's approach, it might even seem more appealing to constructivists than the classical hyperreal line. From the foundational perspective on the other hand, I believe that $\fhyl$ is clearly closer to the usual conceptual resources of analysis, as the construction can be carried out in a semi-constructive setting. One might think that the reliance on \textit{some} amount of choice puts $\fhyl$ in a similar situation as the classical hyperreal line, but I think that argument can be resisted. Indeed, many central results from nineteenth-century analysis rely heavily on the Axiom of Dependent Choices, while the original reception of Zermelo's full Axiom of Choice among analysts was lukewarm as best \cite[Chap.~23]{ewald1996kant}. As Moschovakis writes in \cite{moschovakis2005notes}:

\begin{quote}
    We have remarked that before it was formulated precisely by Zermelo, the Axiom of Choice had been used many times ``silently” in classical mathematics, and in particular in analysis. \textit{These classical applications, however, can all be justified on the basis of the Axiom of Dependent Choices}---in fact most of them need only the weaker Countable Principle of Choice. [...] This difference between the choice principles needed for classical mathematics and those required by Cantor’s new theory of sets explains in part the strident reaction to the axioms of Zermelo by the distinguished analysts of his time (including the great Borel), who had used choice principles routinely in their work—and continued using them, as they denounced general set theory and called it an illusion: in the context of 19th century classical analysis, the Axiom of Dependent Choices is natural and necessary, while the full Axiom of Choice is unnecessary and even has some counterintuitive consequences, including certainly the Wellordering Theorem. \cite[pp.~116-117]{moschovakis2005notes}
\end{quote}

We may therefore argue that the proof-theoretic resources required by $\fhyl$ fit squarely within the natural resources required by classical analysis. Unlike in Robinsonian nonstandard analysis, the fragment of the Axiom of Choice that belongs to the semi-constructive setting does not yield paradoxical consequences from the point of view of the theory of the real numbers, like a well-ordering of the reals or the existence of a non-Lebesgue measurable set. In fact, this latter point also allows us to draw a sharp contrast between $\fhyl$ and the classical hyperreal line with respect to an influential criticism of nonstandard analysis recently voiced by Alain Connes.\\

In a famous passage in the first chapter of \cite{connes}, Connes, discussing the relationship between ``logic and reality'', raises the following objection to the use of nonstandard analysis in ordinary mathematics:

\begin{quote}
    But in the final analysis, I became aware of an absolutely major flaw in this theory, an irremediable defect. It is this: in nonstandard analysis, one is supposed to manipulate infinitesimals; yet, if such an infinitesimal is given, starting from any given nonstandard number, a subset of the interval $[0,1]$ automatically arises which is not measurable in the sense of Lebesgue. [...] The conclusion I drew was that no one will ever be able to show me a nonstandard number. \cite[p.~16]{connes}
\end{quote}

This leads him to argue that nonstandard analysis does not describe the ``primordial mathematical reality'' that he claims is the true subject of mathematics:

\begin{quote}
    What conclusion can we draw about nonstandard analysis? This means that, since no one will ever be able to name a nonstandard number, the theory remains virtual [...]. [p.~17]
\end{quote}

Connes doesn't specify exactly what he means by ``naming'' or ``showing'' a non-standard number, but his argument seems to go along the following lines. Non-measurable sets of real numbers are highly abstract objects, which tend not to appear in ordinary mathematical practice. Connes cites here Solovay's model of $ZF+DC \text{ }+$ ``All sets of reals are Lebesgue measurable'' in \cite{solovaymodel} as evidence that no non-measurable set of reals would ever be encountered in ordinary mathematics. However, since one may use a non-principal ultrafilter on $\omega$ to define a non-measurable set of reals and, as we have seen before, such ultrafilters can themselves be defined from hyperfinite natural numbers in a sufficiently saturated elementary extension of the reals, such hyperfinite natural numbers must be as abstract and ineffable as nonmeasurable sets of reals. Although Connes does not explicitly phrase his argument in terms of purity considerations, it seems nonetheless possible to understand his worry as such and stemming from similar considerations as Schechter's distinction between quasi-constructive mathematics and \textit{intangibles}. The crucial problem of nonstandard analysis is that it uses resources beyond the universe of ordinary mathematics, ``virtual'' mathematical objects that cannot be ``named'' or explicitly constructed, in order to derive results about what Connes calls the primordial mathematical reality.\\

As noted by Kanovei et al. in \cite{kanovei2013tools}, the issue of whether hyperreal numbers can be ``named'' or ``defined'' is a delicate problem. One the one hand, Solovay showed that it is consistent with $ZFC$ that no nonmeasurable set of the reals is ordinal definable (as the ordinal definable sets in this model form precisely Solovay's model of $ZF+DC+$ ``All sets of reals are Lebesgue measurable''), which yields a model in which no non-principal ultrafilter on $\omega$ is ordinal definable. On the other hand, G\"odel's constructible universe $L$ provides a well-known example of a model of $ZFC$ in which nonmeasurable sets appear very low in the projective hierarchy and are not only ordinal definable but in fact constructible. Kanovei et al. also cite a result in \cite{kanovei2004definable} in which the existence of an ordinal definable hyperreal extension is proved in $ZFC$.

I do not wish to adjudicate here the debate between Connes and his critics regarding whether Robinsonian hyperreals can be named or not, but I would rather like to point out that the debate does not arise in the case of the $F$-hyperreals. Indeed, since $\fhyl$ is constructed assuming only $ZF+DC$, it can be constructed even in Solovay's model. Moreover, elements in $\fhyl$ can easily be named and are very familiar objects, since they are merely countable sequences of real numbers. Exhibiting an $F$-infinitesimal for any $F \in \frec$ is exceedingly easy, since any $F_0$-infinitesimal is an $F$-infinitesimal for any $F \in \frec$, and such infinitesimals are easily defined. Of course, determining \textit{whether} a given element $a \in \fhyl$ is an $F$-infinitesimal for some $F\in \frec$ is a much more complex task, just like determining whether two elements $a,b \in \frec$ are identified at $F$. One might be tempted to conclude from this that not much is truly gained by moving away from a Luxemburg ultrapower to $\fhyl$. Indeed, one might argue that one could slightly modify the ultrapower construction to take countable sequences of reals as elements, and define the semantics of the equality predicate as the equivalence relation on this domain induced by the ultrafilter, rather than as \textit{bona fide} equality. Clearly, in such a construction, one could ``name'' infinitesimals just as easily as in $\fhyl$. But there is a significant difference between the two constructions that I think is worth emphasizing. As noted in \cref{fhyl}, validity in $\fhyl$ coincides with the forcing relation at $F_0$, which is itself \textit{absolute} from the existence of ultrafilters in a rather strong sense. By \cref{losx}, determining whether a first-order sentence is forced at $F_0$ only requires examining whether the set of natural numbers it determines is cofinite and does not depend on the existence of any non-principal ultrafilter on $\omega$. By contrast, even with the modification proposed above, Luxemburg ultrapowers still require the existence of non-principal ultrafilters to be defined, and \L\'os's Theorem shows that satisfaction in such a model is truly determined by the ultrafilter. Thus the real gain from taking $\fhyl$ as our hyperreal line comes from the fact that satisfaction in the model becomes more tractable, without losing the power of the Transfer and Saturation Principles. 

Therefore, in contrast with the classical hyperreal line, $\fhyl$ seems to be an acceptable structure even from Connes's viewpoint. In fact, the generic perspective on $\fhyl$ seems to align quite well with Connes's claim that the theory of Luxemburg ultrapowers ``remains virtual'', since it establishes in a semi-constructive setting that Luxemburg ultrapowers are essentially obtained by forcing over $\frec$. At the same time, it allows us to disentangle a theory of hyperreals from the theory of ultrapowers  and to identify only the latter as ``virtual'', because of its reliance on the Ultrafilter Lemma.
To sum up, I have argued that, regarding criticisms against nonstandard analysis that are rooted in purity concerns broadly understood, the semi-constructive approach fares better than the classical approach. The reasons to argue for a topical continuity between nonstandard methods and historical and contemporary mathematical practice remain valid, while from the foundational or elemental viewpoint, the fact that the Axiom of Dependent Choices is enough for the semantics of $\fhyl$ to behave in a tractable way is a clear advantage over the classical hyperreal line.

\subsection{The argument from canonicity}\label{canon}

I will now discuss a second kind of argument against the nonstandard approach, which points out a certain kind of arbitrariness of the hyperreal line. By contrast with the first kind of argument, which targets the methods of nonstandard analysis and in particular its reliance on the Axiom of Choice, this second line of argument targets the very structure of the hyperreal line, arguing that it lacks the kind of mathematical properties enjoyed by other standard number systems. An eloquent proponent of this view is Machover in \cite{machover1993place}. Reflecting on the developments in nonstandard analysis in the first thirty years after Robinson's seminal work, Machover observes that the mathematical community at large has not embraced nonstandard methods as quickly as some had hoped and offers a explanation for this phenomenon. According to him, a central problem with nonstandard analysis lies in the fact that the structures it studies, enlargements of the standard universe, fail to be \textit{canonical} in the way that the natural numbers or the reals are:

\begin{quote}
    The point is that whereas the classical number systems (the integers, the rationals the reals etc.) are canonical, there is no such thing as \textit{the} canonical system of $^*$integers, $^*$rationals or $^*$reals. The former can be characterized (informally or within set theory) uniquely up to isomorphism by virtue of their \textit{mathematical} properties: for example, the field of rationals is the smallest field containing the integers, and the field of reals is the completion of the field of rationals. But there is no such thing as \textit{the} [enlarged] field of $^*$reals. The $^*$reals (and in particular the infinitesimal $^*$reals) we happen to deal with in a given nonstandard discourse depend on the enlargement chosen. There is no known way of singling out a particular enlargement that can plausibly be regarded as canonical, nor is there any reason to be sure that a method for obtaining a canonical enlargement will necessarily be invented. \cite[pp.~207-208]{machover1993place} 
\end{quote}

Setting aside the broader context of enlargements, the issue that Machover discusses arises already at the level of Luxemburg ultrapowers, since, if the Continuum Hypothesis fails, two ultrapowers of the reals may fail to be isomorphic. Machover's point is then that speaking of \textit{the} hyperreal line is an abuse of language and that nothing in the practice of nonstandard analysis guarantees that the structure that is being studied is fixed precisely enough, that is, up to isomorphism. Here, the fact that Luxemburg ultrapowers rely on the Ultrafilter Lemma is relevant once again: since non-principal ultrafilters are abstract objects which can be proved to exist but cannot be explicilty constructed, it is not possible to fix one particular ultrafilter as a \textit{canonical} one, and consequently no Luxemburg ultrapower can claim to be the privileged hyperreal line. In other words, fixing one ultrapower as the hyperreal line seems both necessary, due to the consistency of the existence of non-isomorphic ultrapowers, and impossible without introducing an element of arbitrariness. In \cite{kanovei2013tools}, Kanovei et al. note that Machover's point should be nuanced by two existing results. First, the existence of an ordinal definable model of the hyperreals due to Kanovei and Shelah mentioned above, and second, a result of Morley and Vaught establishing that ultrapowers induced by ultrafilters on a cardinal $\kappa$ such that $2^{<\kappa} = \kappa$ are all isomorphic. I am not certain, however, that this entirely addresses Machover's criticism, as I believe that there is more to the notion of canonicity than the simple fact that a structure is specified up to isomorphism or ``definable'' in a very narrow set-theoretic sense. Indeed, if the only reason to declare the Kanovei-Shelah model or one of the ultrapowers shown by Morley and Vaught to be unique up to isomorphism as \textit{the} hyperreal line is to rebuke Machover's criticism, then this hardly eliminates the charge of arbitrariness that is at the heart of the canonicity objection. Why should we choose the Kanovei-Shelah model, or one of the Morley-Vaught ones, over one another, as the privileged hyperreal line? \\

According to Machover, the lack of a canonical hyperreal line also means that the well-definedness of many concepts from nonstandard analysis ultimately relies on the existence of their standard counterpart. He takes as a paradigmatic example of this the nonstandard definition of continuity of a real-valued function $f$ at a real number $r$. As mentioned in \cref{fhyl}, one of the most appealing features of nonstandard analysis is that infinitesimals can be used in a rigorous way to give an intuitive definition of continuity at a real number $r$: $f$ is continuous at $r$ if and only if for any $^*$real $x$, $f(r)$ is infinitesimally close to $f(r)$ whenever $x$ is infinitesimally close to $r$. In other words, the nonstandard halo of $r$ must be mapped by $f$ to the nonstandard halo of $f(r)$. But, as Machover observes:

\begin{quote}
[W]hat we want to define here is a binary relation between two standard objects, $f$ and $r$; in order to legitimize [the nonstandard definition of continuity] as a definition of this relation, we must make sure that it is independent of the choice of the enlargement. (Otherwise, what is being defined would be a \textit{ternary} relation between $f$, $r$ and the enlargement.) The easiest way---in fact, the only practicable way, as far as I know---to prove this invariance of [the nonstandard definition] is to show that it is equivalent to the standard $\delta-\epsilon$ definition. Therefore, [the nonstandard definition] cannot displace the old standard definition altogether, if one's aim is to achieve proper rigour and methodological correctness. [p.~208]
\end{quote}

Because the halo of a real number is dependent on the ambient hyperreal line or nonstandard universe, the nonstandard definition of continuity must be supplemented with a proof that it is actually independent from the choice of an enlargement. According to Machover, such a proof can in practice only be given by showing the equivalence with the standard definition of continuity. Hence the nonstandard definition owes a conceptual debt to the standard definition, which, according to Machover, means that nonstandard analysis cannot aim at rigorously replacing classical analysis.\\

Machover's example of the definition of continuity is a particularly good point to compare the classical hyperreal line with $\fhyl$. As we have seen in \cref{cont}, a real-valued function $f$ is continuous at a point $r$ if and only if $f$ is $F$-continuous at $r$ for any $F \in \frec$, i.e., $x \in (r)_F$ implies $f(x) \in (f(r))_F$ for any $x \in \RR^\omega$. It seems at first sight that the situation is comparable to the nonstandard one: we first define continuity as a ternary relation between a real-valued function, a real number and a filter $F$, before showing that the relation actually holds independently of the filter $F$ by showing that it is equivalent to the standard definition of continuity. However, in our case, there is a simple argument that allows us to show that a function $f$ is either $F$-continuous at some real number $r$ for every $F \in \frec$ or for no $F \in \frec$. Indeed, we may expand our language $\la$ to a language $\la^*$ with a new binary relation symbol $\simeq$ to represent infinitesimal closeness of two $F$-hyperreals. Formally, this means that we interpret this new symbol $\simeq$ so that for any $a,b \in \RR^\omega$ and any $F\in \frec$, $F \fr a \simeq b$ iff $|||a-b| < \frac{1}{n}|| \in F$ for every $n \in \omega$. It is straightforward to verify that this interpretation of $\simeq$ satisfies the persistence and refinability conditions of possibility semantics. Hence we may now view $\fhyl$ as an $\la^*$-possibility structure, in which $F$-continuity of a function $f$ at some real $r$ is equivalent to $F \fr \forall x (x \simeq r \to f(x) \simeq f(r))$. Similarly to \cref{losx}, \L\'os's Theorem holds for all $\la^*$-formulas. I now claim the following:

\begin{lemma} \label{invacon}
For any function $f: \RR \to \RR$ and any $r \in \RR$, $F_0 \fr \forall x (x \simeq r \to f(x) \simeq f(r))$, or $F_0 \fr \exists x (x \simeq r \me \neg (f(x) \simeq f(r))$. As a consequence, $f$ is either $F$-continuous at $r$ for every $F \in \frec$ or for no $F \in \frec$.
\end{lemma}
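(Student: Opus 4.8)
The plan is to settle, in the (classical) metatheory, whether $f$ is continuous at $r$ in the ordinary $\epsilon$--$\delta$ sense, and to establish the first disjunct of the dichotomy in the continuous case and the second in the discontinuous case. The ``consequence'' clause then requires almost nothing new: \cref{cont} already states that $f$ is continuous at $r$ exactly when $\st f$ is $F$-continuous at $r$ for \emph{every} $F\in\frec$, and the converse direction of its proof only invokes $F$-continuity at a \emph{single} $F\in\frec$; hence $F$-continuity of $\st f$ at $r$ holds either at all of $\frec$ or at none of it. What genuinely uses the expanded language $\la^*$ is the sharper assertion that one of the two $\la^*$-sentences is already forced at the Fréchet filter $F_0$ itself.

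For the continuous case, I would verify $F_0\fr\forall x\,(x\simeq r\to f(x)\simeq f(r))$ straight from the forcing clauses rather than from any \L\'os-style theorem for $\la^*$. Unwinding the universal and the conditional (and recalling that $\leq$ on $\frec$ is $\rset$), this reduces to: for every $a\in\RR^\omega$ and every $G\rset F_0$, if $G\fr a\simeq r$ then $G\fr f(a)\simeq f(r)$. Now $G\fr a\simeq r$ is, by the interpretation of $\simeq$ together with \cref{halo}, exactly the statement $a\in(\delta(r))_G$; since \cref{cont} gives that $\st f$ is $G$-continuous at $r$, we get $\st f(a)\in(\st f(\delta(r)))_G=(\delta(f(r)))_G$, which is just $G\fr f(a)\simeq f(r)$.

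For the discontinuous case, I would exploit the failure of $\epsilon$--$\delta$ continuity directly: there is $\epsilon>0$ such that for every $\delta>0$ some $a\in\RR$ has $|r-a|<\delta$ and $|f(r)-f(a)|\ge\epsilon$. Using the Axiom of Countable Choices, choose for each $k\ge1$ a real $a_k$ with $|r-a_k|<\frac{1}{k}$ and $|f(r)-f(a_k)|\ge\epsilon$, and let $b\in\RR^\omega$ be the guise with $b(k)=a_k$ for $k\ge1$. For each $n$ the set $\{k\in\omega:|b(k)-r|<\frac{1}{n}\}$ contains every $k>n$, hence is cofinite and lies in $F_0$; so $F_0\fr b\simeq r$. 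On the other hand, fixing $m$ with $\frac{1}{m}\le\epsilon$, the set $\{k\in\omega:|f(b(k))-f(r)|<\frac{1}{m}\}$ is contained in $\{0\}$, hence is finite and therefore lies in \emph{no} non-principal filter; thus $G\nfr f(b)\simeq f(r)$ for every $G\rset F_0$, i.e.\ $F_0\fr\neg(f(b)\simeq f(r))$. Combining, $F_0\fr b\simeq r\me\neg(f(b)\simeq f(r))$, and since forced formulas persist downward, $b$ serves as the required witness for $F_0\fr\exists x\,(x\simeq r\me\neg(f(x)\simeq f(r)))$.

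The place where care is needed --- rather than a blind appeal to the $\la^*$-analogue of \cref{losx} --- is that $\simeq$ is not a ``pointwise'' relation the way the symbols of $\la$ and $\la^+$ are: because filters on $\omega$ need not be countably complete, $F\fr a\simeq b$ cannot be read off from a single subset of $\omega$ but only from the whole family $\{\{i\in\omega:|a(i)-b(i)|<\frac{1}{n}\}:n\in\mathbb{N}\}$. This is exactly why the witness $b$ in the discontinuous case must be chosen so that $f(b)$ is \emph{not} infinitely close to $f(r)$ at every index $k\ge1$ (making the ``close'' set finite) rather than merely on an infinite set: a weaker construction would only force the existential at some proper extension of $F_0$, and the dichotomy at $F_0$ --- equivalently, the validity of this instance of excluded middle throughout $\fhyl$, since validity coincides with forcing at $F_0$ --- would be lost.
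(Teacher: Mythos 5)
Your proof is correct, but it takes a genuinely different route from the paper's. You settle the dichotomy by a metatheoretic case split on ordinary $\epsilon$--$\delta$ continuity: in the continuous case you import \cref{cont} to get $G$-continuity at every $G \rset F_0$ and unwind the forcing clauses for $\forall$ and $\to$; in the discontinuous case you build the witness $b$ directly from a sequence of standard counterexamples, correctly arranging that the ``closeness'' set for $f(b)$ versus $f(r)$ is finite so that no non-principal filter can force $f(b) \simeq f(r)$ (your closing remark about $\simeq$ not being reducible to a single subset of $\omega$, and hence about needing failure at cofinitely many indices rather than infinitely many, is exactly the right point of care). The paper instead argues entirely inside the possibility structure: assuming $F_0 \nfr \forall x (x \simeq r \to f(x) \simeq f(r))$, it takes a counterexample $a$ at an arbitrary $F \in \frec$ and diagonalizes along the infinite sets $\bigcap_{k<i}|||a-r|<\frac{1}{k}|| \cap |||f(a)-f(r)|\geq\frac{1}{m}||$ to transfer it to a counterexample at $F_0$; the ``consequence'' clause is then read off from persistence, whereas you read it off from the single-$F$ converse direction of \cref{cont}. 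Mathematically both are fine, and yours is arguably shorter; what the paper's route buys is precisely the point it stresses immediately after the proof and uses against Machover in \cref{canon}: its argument ``does not mention the standard definition of continuity in any way,'' so the invariance of the nonstandard definition is established without routing through the Weierstrass definition. Your argument, by going through \cref{cont} (and through the standard notion of discontinuity), re-introduces exactly the conceptual dependence the lemma is meant to show is avoidable, so while it proves the statement, it would not serve the paper's dialectical purpose.
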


\begin{proof}
By persistence, it is immediate to see that if $F_0$ forces that $f$ is continuous at $r$ or that $F$ is not continuous at $r$, then $f$ must either be $F$-continuous at $r$ for every $F \in \frec$, or $F$-discontinuous at $r$ for any $F \in \frec$ respectively. So let us show that either $F_0 \fr \forall x (x \simeq r \to f(x) \simeq f(r))$ or $F_0 \fr \exists x (x \simeq r \me \neg f(x) \simeq f(r))$. Suppose that $F_0 \not \fr \forall x (x \simeq r \to f(x) \simeq f(r))$. Then there is some $F \in \frec$ and some $a \in \RR^\omega$ such that $F \fr a \simeq r$ and $F \fr \neg f(a) \simeq f(r)$. This means that $|||a - r| < \frac{1}{n}|| \in F$ for all $n \in \omega$, and that there is some $m \in \omega$ such that $|||f(a) - f(r)|\geq \frac{1}{m}|| \in F$. Since $F$ is a non-principal filter on $\omega$, this means that for any $i \in \omega$, the set $A_i = \bigcap_{k < i} |||a - r| < \frac{1}{k}|| \cap |||f(a) - f(r)| \geq \frac{1}{m}||$ is infinite. Let $b: \omega \to \RR$ be defined by letting $b(i) = a(j)$, where $j$ is the least $n \in A_i$ such that $n > i$. By construction, we have that $i \in |||b-r| < \frac{1}{n}||$ whenever $n < i$, which means that $F_0 \fr b \simeq r$. Moreover, $|||f(b) - f(r)| \geq \frac{1}{m}|| = \omega$, hence $F_0 \fr \neg f(b) \simeq f(r)$. This completes the proof.
\end{proof}
Note that the proof above does not mention the standard definition of continuity in any way. Indeed, the core of the argument is to show that any counterexample to the $F$-continuity of $f$ for \textit{some} non-principal filter $F$ can be turned into a counterexample to the $F_0$-continuity of $f$. As a consequence, $F$-continuity of a function is a property that is entirely determined by the Fréchet filter, and we could therefore substitute $F$-continuity for the standard definition of continuity without any risk of ambiguity. Interestingly, the argument above also gives a way out of Machover's criticism of the nonstandard definition of continuity. Indeed, as the comparison between $\fhyl$ and the generic approach to nonstandard analysis has made apparent, Luxemburg ultrapowers of $\RR$ are precisely generic models over $\frec$. It is straightforward to verify that the results in \cref{ult} would extend to $\fhyl$ considered as an $\la^*$-structure. In particular, this means that for any ultrafilter $U$ on $\omega$, a function $f$ will be continuous at a real number $r$ according to the ultrapower $\RR_{/U}$ if and only if it is $F$-continuous for some $U \rset F \in \frec$. But this, in connection with \cref{invacon}, straightforwardly implies that the non-standard definition of continuity is independent of the choice of the ultrapower $\RR_{/U}$. This shows that, if one takes Machover's challenge seriously, $\fhyl$ has some significant foundational consequences for nonstandard analysis even in the presence of the Ultrafilter Lemma.\\

What about Machover's broader point regarding the lack of canonicity of the classical hyperreal line? From the purely mathematical perspective, there is a precise way in which $\fhyl$ is canonical in the sense of being characterized up to isomorphism. If we adopt Scott's perspective as outlined in \cref{gen}, $\fhyl$ is a Boolean-valued model of $\RR$. Boolean-valued models are characterized by their base structure and their Boolean algebra of truth values. Of course, $\RR$ is the unique complete ordered field while, as discussed in \cref{scott}, the Boolean algebra of truth-values in $\fhyl$ is the canonical extension of $\Po(\omega)^*$. Since the canonical extension of a Boolean algebra $B$ can always be characterized purely algebraically up to a unique isomorphism fixing $B$ (see for instance \cite{gehrke2013delta1}), characterizing $\fhyl$ uniquely up to isomorphism as a Boolean-valued model reduces to characterizing up to isomorphism the Boolean algebra $\Po(\omega)^*$. This turns out to be a slightly more involved matter. In the presence of the Continuum Hypothesis, this can be done directly, as shown in \cite{vanmill1984}. Without assuming CH, one can still argue in an indirect way as follows. By the Lindenbaum-Tarski-J\'onsson duality between complete atomic Boolean algebras and sets (see \cite[Chap.~6]{Johnstone}), $\Po(\omega)$ is the unique complete Boolean algebra generated by a countable set of atoms. Moreover, a \textit{compact} element in a complete Boolean algebra $B$ is some element $b \in B$ such that for any $X \sset B$, $b \leq \bigjo X$ implies $b \leq \bigjo X'$ for some finite $X' \sset X$. It is straightforward to verify that the compact elements of a Boolean algebra $B$ always form an ideal on $B$, and that if $f : B \to \Po(\omega)$ is a Boolean isomorphism, then $b$ is a compact element of $B$ iff $f(b)$ is a finite subset of $\omega$. Putting things together, this means that $\fhyl$ is the unique $\mathbb{B}$-valued model of the complete ordered field, where $\mathbb{B}$ is the canonical extension of the quotient of a complete Boolean algebra generated by a countable set of atoms modulo its compact elements.

Moreover, as I have mentioned above, there is probably more to the informal notion of canonicity than a mere characterization up to isomorphism. In the case of $\fhyl$, I think it is also possible to argue that the structure we obtain is a \textit{natural} mathematical object to investigate. Indeed, as I have argued at length in the previous sections, $\fhyl$ seems to arise as a common ground for many distinct mathematical projects. It is a strengthening of Laugwitz's attempt to provide rigorous foundations to Leibniz and Cauchy's calculus and to Euler's work with infinite sequences, and at the same time it is a formalization of Tao's attempt to bring nonstandard analysis closer to ordinary mathematical practice. It is a simplification of Palmgren's topos-theoretic approach to constructive nonstandard analysis, as well as a more concrete approach to Boolean-valued models of analysis. Finally, it brings a generic perspective to classical nonstandard analysis, allowing Luxemburg ultrapowers to be seen as generic models in the sense of forcing. 

As such, $\fhyl$ can also be seen as providing both a diagnosis and a cure to the canonicity problem identified by Machover. If we agree with Laugwitz that the core idea of the infinitesimal method is that the properties of sequences ``in the limit'' are determined by the properties of all but finitely many of their values, then only the cofinite sets of $\omega$ can be deemed as large enough to determine truth in the hyperreal line. As Tarskian semantics requires an ultrafilter for the Transfer Principle to apply, Robinsonian hyperreals force us to select some infinite and co-infinite sets to count as ``large enough'' as well, thus introducing an element of seemingly unavoidable arbitrariness. By contrast, the dynamic character of possibility semantics allows us to consider all possible such choices at once and to restore the Transfer Principle at the only canonical stage, that of the Fréchet filter.

\section{Conclusion}
In this paper, I have used possibility semantics for first-order logic to define the $F$-hyperreal line $\fhyl$ as an alternative to the classical ultrapower approach to nonstandard analysis. As I have argued, $\fhyl$ shares many of the technical advantages of the classical hyperreal line, arguably coming closer to it than the asymptotic, dynamic and generic approaches I have discussed. At the same time, its more constructive character makes it an attractive option from a foundational and methodological standpoint. I therefore hope to have convinced the reader that the semi-constructive approach to nonstandard analysis I have sketched here is a mathematically natural and philosophically rich alternative to explore. 

The work presented here does not exhaust the ways in which possibility semantics may interact with nonstandard analysis. For one, I have only discussed an alternative to the use of countably saturated ultrapowers, but nonstandard analysis is a much wider field. In particular, it routinely studies a larger class of structures, such as enlargements of the standard universe of sets, and it remains to be seen whether one could also develop a satisfactory alternative to such structures in a semi-constructive setting. Similarly, I have only touched on the idea that the $F$-hyperreal line is a straightforward attempt to capture an intuition about the properties of sequences being determined by their values ``almost everywhere''. Whether this idea is as historically significant as hinted by Laugwitz, and whether it could motivate a proper \textit{conception} of the continuum, in the sense of \cite{feferman2009conceptions}, will have to be explored in future work.

\section*{Acknowledgments}
I would like to thank Wes Holliday and Anna Bellomo for their helpful comments on earlier versions of this manuscript.

\appendix

\section{} \label{app}

In this appendix, I show how to define an analogue of the standard part function on $\fhyl$. As mentioned in \cref{fhyl}, it is not possible to define an actual function $f : \RR^\omega \to \RR$ that would map any finite $F$-hyperreal to its standard part, on pain of arbitrariness or inconsistency. Nonetheless, it is possible to define such a function \textit{internally}, meaning that one may extend the language of $\fhyl$ so as to include a binary relation symbol $st(x,y)$, to be interpreted as ``$x$ is the standard part of $y$''. One can then show that any viewpoint $F \in \frec$ forces that any finite $F$-hyperreal has a unique standard part. The first step is to extend the language introduced in \cref{canon} to include a \textit{standardness} predicate:

\begin{definition}
Let $\la^\dagger$ be the language $\la^*$ augmented with a unary predicate symbol $S(x)$ and a binary relation symbol $st(x,y)$. We extend the interpretation function $\itp$ to $\la^\dagger$ as follows:
\begin{itemize}
    \item For any $a \in \fhyl$ and any $F \in \frec$, $a \in \itp(F,S)$ iff $|| \bigjo_{r \in S} a = \delta(r)|| \in F$ for some finite subset $S$ of $\RR$.
    \item For any $a, b \in \fhyl$ and any $F \in \frec$, $(a,b) \in \itp(F,st)$ iff $F \fr S(a) \me a \simeq b$.
\end{itemize}

It is straightforward to verify that the interpretation of $S(x)$ and $st(x,y)$ satisfies the conditions of \cref{def1}. We may now show that $st(x,y)$ is interpreted as a total function on finite $F$-hyperreals in $\fhyl$:

\begin{lemma}
For any $F \in \frec$, $$F\fr \forall y (\exists z_1 \exists z_2 (S(z_1) \me S(z_2) \me z_1 < y < z_2) \to \exists ! x (st(x,y)).$$
\end{lemma}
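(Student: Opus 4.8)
The plan is to unfold the forcing clauses and reduce the statement to a purely combinatorial construction of a filter. By the clauses for $\forall$ and $\to$, together with persistence of $\fr$, it suffices to prove that for \emph{every} $G \in \frec$ and every $b \in \fhyl$, if $G \fr \exists z_1 \exists z_2 (S(z_1) \me S(z_2) \me z_1 < b < z_2)$ then $G \fr \exists ! x\, st(x,b)$; establishing this for all $G$ gives the lemma for every $F \in \frec$. Unfolding the existential on the conclusion, and using that the hypothesis is persistent, what must be shown is: for each $G' \rset G$ there are $H \rset G'$ and $x \in \fhyl$ with $H \fr st(x,b)$ and $H \fr \forall x'(st(x',b) \to x = x')$.

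First I would extract a concrete bound. Applying the existential clause to the (persistent) hypothesis at $G'$ yields a refinement $\tilde G \rset G'$ and witnesses $z_1, z_2 \in \fhyl$ with $\tilde G \fr S(z_1) \me S(z_2) \me z_1 < b < z_2$; this detour through $\tilde G$ is needed because fullness (\cref{rmk1}) is not available for $\la^\dagger$, as $S$ is not interpreted coordinatewise. By definition of $S$ there are finite $S_1, S_2 \sset \RR$ with $\{i : z_1(i) \in S_1\}, \{i : z_2(i) \in S_2\} \in \tilde G$; intersecting these with $||z_1 < b||$ and $||b < z_2||$ produces a set $J \in \tilde G$ on which $m \le b(i) \le M$, where $m = \min S_1$ and $M = \max S_2$.

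The heart of the argument is a $DC$-bisection of $[m,M]$ carving out $H$. Starting from $I_0 = [m,M]$ and noting that the filter generated by $\tilde G \cup \{\{i : b(i) \in I_0\}\}$ is just $\tilde G$ (since $\{i : b(i)\in I_0\} \rset J$), I would recursively halve: given the filter $G_k$ generated by $\tilde G$ together with $\{i : b(i) \in I_j\}$ for $j \le k$, split $I_k$ into two halves; since $G_k$ is non-principal (it contains $F_0$), at least one half $I_{k+1}$ is such that $G_k \cup \{\{i : b(i) \in I_{k+1}\}\}$ still has the finite intersection property, for otherwise two filter sets would exhibit a $G_k$-large set as finite. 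Dependent Choices then yields a nested sequence $I_k$ with $|I_k| \to 0$, $\bigcap_k I_k = \{r^\ast\}$, and a proper non-principal $H := \bigcup_k G_k \rset G'$. Then $\delta(r^\ast)$ is a standard part of $b$ at $H$: it is $S$ (a constant sequence), and whenever $|I_k| < \tfrac1n$ and $r^\ast \in I_k$ we have $\{i : b(i) \in I_k\} \sset |||b - \delta(r^\ast)| < \tfrac1n||$, so this set lies in $H$ and $H \fr b \simeq \delta(r^\ast)$, giving $H \fr st(\delta(r^\ast),b)$. Note that saturation (\cref{sat}) cannot replace this step: as the example of an enumeration of $\mathbb{Q}\cap[0,1]$ shows, $b$ may have no standard part at $F_0$ itself, so one genuinely must pass to a refinement.

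For uniqueness — the most delicate point — I would take any $K \rset H$ and $x' \in \fhyl$ with $K \fr st(x',b)$, so $x'$ takes values in a finite $T' \sset \RR$ on a $K$-large set $W$, and $|||x' - b| < \tfrac1n|| \in K$ for all $n$. Pick $\epsilon > 0$ below one third the distance from $r^\ast$ to $T' \setminus \{r^\ast\}$, then $n$ and $k$ with $\tfrac1n, |I_k| < \epsilon$ and $r^\ast \in I_k$; the set $P = W \cap |||x' - b| < \tfrac1n|| \cap \{i : b(i) \in I_k\}$ lies in $K$, and for $i \in P$ the triangle inequality gives $|x'(i) - r^\ast| < 2\epsilon$, which forces $x'(i) = r^\ast$ since every other element of $T'$ is at distance at least $3\epsilon$ from $r^\ast$ (and properness of $K$ rules out $r^\ast \notin T'$, as then $P = \emptyset$). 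Hence $||x' = \delta(r^\ast)|| \rset P \in K$, i.e. $K \fr x' = \delta(r^\ast)$, establishing $H \fr \forall x'(st(x',b) \to \delta(r^\ast) = x')$. Taking $x = \delta(r^\ast)$ finishes the proof. The main obstacle is exactly the bisection: working in $ZF+DC$ we cannot extend $G'$ to an ultrafilter (where the standard part would be immediate from Keisler's theorem and \L\'os's theorem), so we must hand-build, from non-principality and $DC$ alone, a filter $H$ along which $b$ is infinitely close to a single real, and then verify this real is pinned down rigidly enough for uniqueness.
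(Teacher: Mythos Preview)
Your proof is correct, but it takes a genuinely different route from the paper for the existence half. The paper identifies the candidate standard part directly via the least upper bound property: given $G$, it forms $A = \{r \in \RR : G \not\fr b \le \delta(r)\}$, takes $r = \sup A$, and then does a short case analysis (according to whether some $||b = \delta(r)|| \notin G$, and then whether $G \fr b < r$) to build $H$ by adjoining an explicit countable family like $\{||\,|b - \delta(r)| < \tfrac1n\,||\}_{n}$ to $G$. Your bisection instead discovers $r^\ast$ and $H$ simultaneously, choosing at each stage a half of the current interval that keeps the finite intersection property, and invoking $DC$ to thread the choices. Both constructions produce a countably generated extension of the given filter; the paper's version mirrors the classical Standard Part Principle argument and needs no choice once $r$ is fixed, while yours is more algorithmic and sidesteps the case split entirely at the cost of an explicit appeal to $DC$ (which could be eliminated by a tie-breaking convention such as ``prefer the left half'').

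For uniqueness the two arguments also differ slightly in shape. The paper proves the global statement that $F \fr st(a_1,b) \me st(a_2,b)$ implies $F \fr a_1 = a_2$ at any $F$, by extracting single reals $r_1, r_2$ and using the triangle inequality to force $r_1 = r_2$. You instead prove uniqueness locally at the constructed $H$, relative to the specific witness $\delta(r^\ast)$, and your handling of the finite value-set $T'$ coming from the definition of $S$ is more explicit than the paper's (which tacitly passes from ``finite range'' to ``single value''). Either formulation suffices for $\exists !$. Your preliminary detour through $\tilde G$ to extract concrete real bounds $m, M$ is also a point the paper glosses over with ``which we may assume to be real numbers''.
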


\begin{proof}
Fix some $F \in \frec$ and some $b \in \RR^\omega$ such that $F \fr z_1 < b < z_2$ for some $F$-standard $z_1, z_2$ which we may assume to be real numbers. Let us first establish that $F \fr \exists x (st(x,b))$. By \cref{semdef}, we must show that for any $G \rset F$, there is $H \rset G$ and $a \in \RR^\omega$ such that $H \fr st(a,b)$. Fix such a filter $G$. We consider the following two cases:
\begin{itemize}
    \item Case $1$: For some $r \in \RR$, $||b \neq \delta(r)|| \notin G$. Then letting $H$ be the filter generated by $G \cup ||b = \delta(r)||$, we have that $H \fr b = \delta(r)$. Since $\delta(r)$ is clearly $H$-standard, it follows that $H \fr st(\delta(r),b)$.
    \item Case $2$: For all $r \in \RR$, $||b \neq \delta(r)|| \in G$. Then let $A$ be the set $\{r \in \RR : G \not\fr b \leq \delta(r)\}$. Note that $z_1 \in A$ and that $z_2$ is an upper bound of $A$, so by the completeness of the real line $A$ has a least upper bound $r$. Now we distinguish again two cases:
    \begin{itemize}
        \item Case $2.1$: $G \fr b < r$. Since $r$ is the least upper bound of the set $A$, we must have that $G \not\fr b \leq \delta(r - \frac{1}{n})$ for any $n \in \mathbb{N}$, since otherwise $r - \frac{1}{n}$ would be an upper bound of $A$. But this means that $||b \leq \delta(r - \frac{1}{n})|| \notin G$ for any $n \in \mathbb{N}$, and thus the set $G \cup \{||\delta(r)-y < \frac{1}{n}|| : n \in \mathbb{N}\}$ generates a filter $H$. By construction, $H \fr \delta(r) \simeq b$, and therefore $H \fr st(\delta(r),b)$.
        \item Case $2.2$: $G \not\fr b < r$. For any $n \in \mathbb{N}$, let $B_n = ||0 < y - \delta(r) < \frac{1}{n}||$. I claim that the set $G \cup \{B_n : n \in \mathbb{N}\}$ generates a filter. To show this, it is enough to show that $\omega \setminus B_n \notin G$ for any $n \in \mathbb{N}$. Suppose towards a contradiction that $\omega \setminus B_n \in G$ for some $n \in \mathbb{N}$. This means that $G \fr b \leq \delta(r) \jo \delta(r + \frac{1}{n}) \leq b$. Since $r$ is an upper bound of $A$, $r+\frac{1}{n} \notin A$, which means that $G \fr b \leq \delta(r + \frac{1}{n})$. By distributivity, this means that $$G \fr (b \leq \delta(r) \me b \leq \delta(r + \frac{1}{n})) \jo (\delta(r + \frac{1}{n}) \leq b \me b \leq \delta(r + \frac{1}{n})),$$ which is equivalent to $$G \fr b < \delta(r) \jo b = \delta(r) \jo b = \delta(r + \frac{1}{n}).$$ 
        \noindent Since $||b \neq \delta(s)|| \in G$ for all $s \in \RR$, this implies that $G \fr b < \delta(r)$, a contradiction. Thus $G \cup \{B_n : n \in \mathbb{N}\}$ generates a filter $H \in \frec$. By construction, $H \fr \delta(r) \simeq b$, and therefore $H \fr st(\delta(r),b)$.
    
    \end{itemize}
\end{itemize}
This completes the proof that $F \fr \exists x(st(x,b))$. For uniqueness, it suffices to show that $F \fr st(a_1,b) \me st(a_2,b)$ implies $F \fr a_1 = a_2$ for any $a_1,a_2 \in \RR^\omega$ and $F \in \frec$. From $F \fr st(a_1,b)$, we have that $||a_1 = \delta(r_1)|| \in F$ for some $r_1 \in F$ and that $F \fr a_1 \simeq b$. But this means that $|||\delta(r_1)-b| < \frac{1}{n}|| \in F$ for any $n \in \mathbb{N}$. Similarly, $|||\delta(r_2)-b| < \frac{1}{n}|| \in F$ for any $n \in \mathbb{N}$. By the triangle inequality, $|\delta(r_1)(i) - \delta(r_2)(i)| \leq |\delta(r_1)(i) - y(i)| + |\delta(r_2)(i) - y(i)|$ for any $i \in \omega$, hence for any $n \in \mathbb{N}$, $|||\delta(r_1)-y| < \frac{1}{2n}|| \cap |||\delta(r_2)-y| < \frac{1}{2n}|| \sset |||\delta(r_1)-\delta(r_2)| < \frac{1}{n}||$, which implies that $|||\delta(r_1)-\delta(r_2)| < \frac{1}{n}|| \in F$ and is thus non-empty. But this implies at once that $|r_1 - r_2| < \frac{1}{n}$ for any $n \in \mathbb{N}$, and therefore $r_1 = r_2$. Thus $||a_1 = r_1|| \cap ||a_2 = r_2|| \sset ||a_1 = a_2||$, and $F \fr a_1 = a_2$.
\end{proof}
\end{definition}

\printbibliography

\end{document}